\newcommand{\de}{\partial} 
\newcommand{\db}{\overline{\partial}}
\newcommand{\Ric}{\mathrm{Ric}}
\newcommand{\ov}[1]{\overline{#1}}
\newcommand{\mn}{\sqrt{-1}}
\newcommand{\ti}[1]{\tilde{#1}}
\newcommand{\vp}{\varphi}
\newcommand{\ve}{\varepsilon}
\newcommand{\osc}{\textrm{osc}}
\newcommand{\diam}{\mathrm{diam}}
\newcommand{\vol}{\mathrm{Vol}}
\newcommand{\Rm}{\mathrm{Rm}}
\renewcommand{\leq}{\leqslant}
\renewcommand{\geq}{\geqslant}
\begin{document}
\newcounter{remark}
\newcounter{theor}
\setcounter{remark}{0}
\setcounter{theor}{1}
\newtheorem{claim}{Claim}
\newtheorem{theorem}{Theorem}[section]
\newtheorem{proposition}[theorem]{Proposition}
\newtheorem{question}{question}[section]
\newtheorem{lemma}[theorem]{Lemma}
\newtheorem{defn}[theorem]{Definition}

\newtheorem{corollary}{Corollary}[section]

\newenvironment{remark}[1][Remark]{\addtocounter{theorem}{1} \begin{trivlist}
\item[\hskip
\labelsep {\bfseries #1  \thetheorem.}]}{\end{trivlist}}

\title[K\"ahler-Einstein metrics on Fano surfaces]{K\"ahler-Einstein metrics on Fano surfaces}
\author{Valentino Tosatti}
 \address{Department of Mathematics \\ Columbia University \\ New York, NY 10027}

  \email{tosatti@math.columbia.edu}
\begin{abstract} 
We give an exposition of a result of G. Tian, which says that a Fano surfaces admits a K\"ahler-Einstein metric precisely when the Lie algebra of holomorphic vector fields is reductive.
\end{abstract}
\maketitle
\setcounter{equation}{0}
\section{Introduction}
A Fano surface (or Del Pezzo surface) is a compact K\"ahler manifold $M$ of complex dimension $2$ and with ample anticanonical bundle $K_M^{-1}=\Lambda^2T^{1,0}M$. This is equivalent to the requirement that the first Chern class $c_1(M)=c_1(K_M^{-1})$ be representable by a K\"ahler form.

The question that we want to address in these notes is: which Fano surfaces admit K\"ahler-Einstein metrics? Recall that these are just K\"ahler metrics $\omega$ with Ricci curvature equal to the metric:
\begin{equation}\label{ke}\Ric(\omega)=\omega.\end{equation}
Taking the cohomology class of \eqref{ke} we see that any K\"ahler-Einstein metric 
is cohomologous to $c_1(M)$. There are many known obstructions to the existence of a K\"ahler-Einstein metric, the oldest one being the following
\begin{theorem}[Matsushima \cite{Mat}]\label{matt} If a Fano manifold $M$ admits a K\"ahler-Einstein metric, then the Lie algebra $\mathfrak{h}(M)$ of the spaces of holomorphic vector fields on $M$ is reductive (i.e. it is the complexification of a compact real subalgebra).
\end{theorem}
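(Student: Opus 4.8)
The plan is to use the K\"ahler--Einstein metric $\omega$ (normalized so that $\Ric(\omega)=\omega$) to exhibit $\mathfrak{h}(M)$ as the complexification of the Lie algebra of Killing fields of $\omega$. Let $K=\mathrm{Isom}_0(M,\omega)$ be the identity component of the isometry group of $\omega$; it is a compact Lie group acting on $M$ by biholomorphisms, so $\mathfrak{k}:=\mathrm{Lie}(K)$ is a real Lie subalgebra of $\mathfrak{h}(M)$ which is compactly embedded. Since multiplication by $\sqrt{-1}$ sends holomorphic vector fields to holomorphic vector fields, $\mathfrak{h}(M)$ is a complex Lie algebra, and it therefore suffices to prove the splitting $\mathfrak{h}(M)=\mathfrak{k}\oplus\sqrt{-1}\,\mathfrak{k}$ as real Lie algebras: then $\mathfrak{h}(M)=\mathfrak{k}^{\mathbb{C}}$ is the complexification of the compact algebra $\mathfrak{k}$, which is exactly the definition of reductive.

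First I would set up the Hodge-theoretic description of holomorphic vector fields. Given $X\in\mathfrak{h}(M)$, the $(0,1)$-form $\iota_X\omega$ is $\bar\partial$-closed: this is a short computation using that $X$ is holomorphic and that $\omega$ is K\"ahler (so $\partial_{\bar k}\g{i}{j}=\partial_{\bar j}\g{i}{k}$). Because $M$ is Fano, Kodaira vanishing gives $H^1(M,\mathcal{O}_M)=0$, so there are no nonzero harmonic $(0,1)$-forms, and the Hodge decomposition forces $\iota_X\omega=\bar\partial f$ for a complex-valued function $f$, unique once we normalize by $\int_M f\,\omega^n=0$. Equivalently $X=\nabla^{1,0}f$, i.e.\ $X^i=\gi{i}{j}\partial_{\bar j}f$. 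Conversely, the requirement that $\nabla^{1,0}f$ be holomorphic is an overdetermined system $\partial_{\bar k}(\gi{i}{j}\partial_{\bar j}f)=0$, and I would rewrite it, using the K\"ahler identities together with the Einstein equation $R_{i\bar j}=\g{i}{j}$, to show that it is equivalent to the single scalar equation $\Delta f=f$, where $\Delta$ is the nonnegative Laplacian of $\omega$. In other words, $X\mapsto f$ identifies $\mathfrak{h}(M)$ with the first eigenspace $\Lambda$ of the Laplacian (eigenvalue $1$), among functions of zero mean.

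Next I would exploit that the equation $\Delta f=f$ has real coefficients: writing $f=u+\sqrt{-1}v$ with $u,v$ real, both $u$ and $v$ again lie in $\Lambda$, so $\Lambda=\Lambda_{\mathbb{R}}\oplus\sqrt{-1}\,\Lambda_{\mathbb{R}}$ with $\Lambda_{\mathbb{R}}$ the real first eigenspace, and this is precisely the splitting of $\mathfrak{h}(M)$ into ``real'' and ``imaginary'' parts for its own complex structure, since $\sqrt{-1}\,\nabla^{1,0}f=\nabla^{1,0}(\sqrt{-1}f)$. It remains to match one summand with $\mathfrak{k}$. For $u\in\Lambda_{\mathbb{R}}$ the real vector field $J\,\mathrm{grad}\,u$ (the Hamiltonian field of $u$) preserves $\omega$ because it is Hamiltonian and preserves $J$ because $\nabla^{1,0}u$ is holomorphic, so it is a holomorphic Killing field; under the identification above it corresponds to the purely imaginary potential $\sqrt{-1}u$, giving $\sqrt{-1}\,\Lambda_{\mathbb{R}}\subseteq\mathfrak{k}$. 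Conversely, every Killing field is holomorphic and, since $b_1(M)=0$ forces $H^1(M,\mathbb{R})=0$, it is Hamiltonian with a real potential in $\Lambda_{\mathbb{R}}$, so $\mathfrak{k}\subseteq\sqrt{-1}\,\Lambda_{\mathbb{R}}$. Hence $\mathfrak{k}=\sqrt{-1}\,\Lambda_{\mathbb{R}}$ and $\mathfrak{h}(M)=\Lambda_{\mathbb{R}}\oplus\sqrt{-1}\,\Lambda_{\mathbb{R}}=\sqrt{-1}\,\mathfrak{k}\oplus\mathfrak{k}$, as required.

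I expect the main obstacle to be the analytic core of the second step: the equivalence between $\nabla^{1,0}f$ being a genuine holomorphic vector field and the single scalar eigenvalue equation $\Delta f=f$. This is exactly where the K\"ahler--Einstein hypothesis is indispensable --- for a general K\"ahler metric in $c_1(M)$ a Bochner argument only yields the inequality $\lambda_1\geq 1$, not the correspondence --- so the Einstein equation must be fed into a Bochner--Kodaira--Weitzenb\"ock identity and combined with an integration by parts over the closed manifold $M$. The remaining ingredients --- Kodaira vanishing to produce potentials, the elementary real/imaginary splitting, and the identification of Hamiltonian fields with Killing fields --- are comparatively formal, provided one is careful with the sign and factor conventions relating $\nabla^{1,0}$, $\mathrm{grad}$, $J$ and $\omega$.
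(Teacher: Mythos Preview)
The paper does not prove this theorem; it is quoted as a classical obstruction with a reference to Matsushima \cite{Mat}, so there is no argument here to compare yours against. Your outline is the standard proof and is correct in all essentials: the identification of $\mathfrak{h}(M)$ with the complex $1$-eigenspace of $\Delta$ via the Lichnerowicz--Bochner computation, and the resulting real splitting $\Lambda=\Lambda_{\mathbb R}\oplus\sqrt{-1}\,\Lambda_{\mathbb R}$, go through exactly as you describe.

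The one step you pass over as if it were automatic, but which actually needs its own short argument, is the inclusion $\mathfrak{k}\subset\mathfrak{h}(M)$ at the very start --- equivalently, the claim that every Killing field of a compact K\"ahler metric is real holomorphic. This does not follow pointwise from the K\"ahler condition; one needs either a separate integration-by-parts identity on the closed manifold showing $\|\bar\partial X^{1,0}\|^2=0$ for $X$ Killing, or the observation (valid in the Fano case) that the restricted holonomy is exactly $U(n)$, so $\pm J$ are the only parallel almost complex structures and the identity component of the isometry group must preserve $J$. Once that is in place, your computation that a Killing field has purely imaginary potential is exactly right.
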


On Fano surfaces, the converse to this theorem holds:
\begin{theorem}[Tian \cite{Ti2}]\label{main}
A Fano surface $M$ admits a K\"ahler-Einstein metric if and only if   $\mathfrak{h}(M)$ is reductive.
\end{theorem}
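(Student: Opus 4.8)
The ``only if'' implication is precisely Matsushima's Theorem~\ref{matt}, so the content of the statement is the converse: if $\mathfrak{h}(M)$ is reductive then $M$ admits a K\"ahler--Einstein metric. The plan is to reduce, via the classification of del Pezzo surfaces, to an explicit short list of surfaces, dispose of the easy members by hand, and attack the remaining ones by the Aubin--Yau continuity method, where everything funnels into a single a priori estimate.

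\emph{Classification and reduction.} Every Fano surface is biholomorphic to $\mathbb{P}^2$, to $\mathbb{P}^1\times\mathbb{P}^1$, or to the blow-up $\mathrm{Bl}_k\mathbb{P}^2$ of $\mathbb{P}^2$ at $k$ points in general position, $1\le k\le 8$. I would then determine $\mathfrak{h}(M)$ in each case: it is semisimple for $\mathbb{P}^2$ and $\mathbb{P}^1\times\mathbb{P}^1$, abelian (the torus of the toric surface) for $\mathrm{Bl}_3\mathbb{P}^2$, zero for $\mathrm{Bl}_k\mathbb{P}^2$ with $4\le k\le 8$, and nonzero but non-reductive — it carries a nontrivial unipotent radical — for $\mathrm{Bl}_1\mathbb{P}^2$ and $\mathrm{Bl}_2\mathbb{P}^2$. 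For the latter two the hypothesis of the theorem is vacuous, so it remains to produce a K\"ahler--Einstein metric on $\mathbb{P}^2$, on $\mathbb{P}^1\times\mathbb{P}^1$, and on $\mathrm{Bl}_k\mathbb{P}^2$ for $3\le k\le 8$. The first two are classical: an appropriate multiple of the Fubini--Study metric, respectively a suitably scaled product of two round metrics, satisfies \eqref{ke}.

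\emph{Continuity method.} For $M=\mathrm{Bl}_k\mathbb{P}^2$ with $3\le k\le 8$, fix $\omega\in c_1(M)$ and a Ricci potential $f$, i.e.\ $\Ric(\omega)=\omega+\ddbar f$ with $\int_M e^f\,\omega^2=\int_M\omega^2$, and solve
\[
(\omega+\ddbar\phi_t)^2=e^{f-t\phi_t}\,\omega^2,\qquad \omega+\ddbar\phi_t>0,\quad t\in[0,1],
\]
whose solution at $t=1$ is a K\"ahler--Einstein potential. The solvable set of parameters contains $0$ by Yau's theorem, and it is open: the linearization at a solution $\omega_t$ is $\psi\mapsto\Delta_{\omega_t}\psi+t\psi$, and along the path $\Ric(\omega_t)=t\omega_t+(1-t)\omega$, so for $t<1$ the first nonzero eigenvalue of $-\Delta_{\omega_t}$ strictly exceeds $t$ and the linearization is invertible. (At $t=1$ the kernel of the linearization is exactly $\mathfrak{h}(M)$, so openness at the endpoint is delicate; this is circumvented below.) Writing $T$ for the supremum of the solvable set, it therefore suffices to prove a bound on $\|\phi_t\|_{C^0}$, after the normalization $\sup_M\phi_t=0$, that is uniform for $t\in[0,T]$: Yau's second-order estimate and a Moser iteration upgrade this to a uniform $C^{2,\alpha}$ bound and hence to bounds of all orders, so the solvable set is closed, $T$ is attained, and if $T<1$ openness contradicts maximality; thus $T=1$ and $M$ carries a K\"ahler--Einstein metric. (For $\mathrm{Bl}_3\mathbb{P}^2$, where $\mathfrak{h}(M)\neq0$, the continuity method has a chance only because the Futaki invariant vanishes, which it does since the hexagonal symmetry of the configuration kills the relevant character on $\mathfrak{h}(M)$.)

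\emph{The a priori estimate.} This is the heart of the matter. For the del Pezzo surfaces of lowest degree, $\mathrm{Bl}_7\mathbb{P}^2$ and $\mathrm{Bl}_8\mathbb{P}^2$, I would invoke Tian's $\alpha$-invariant criterion: in complex dimension $2$, a bound $\alpha(M)>\tfrac23$ produces the required uniform $C^0$ estimate, via a H\"older/Poincar\'e inequality applied to $e^{-\frac23\phi_t}$, and a log canonical threshold computation gives $\alpha(M)>\tfrac23$ in exactly these cases. For $\mathrm{Bl}_k\mathbb{P}^2$ with $3\le k\le6$ the $\alpha$-invariant is at most $\tfrac23$ — it equals $\tfrac23$ in the case $k=6$ of a smooth cubic surface — so the plain $\alpha$-invariant is insufficient; here I would pass to the reductive group $G=\mathrm{Aut}(M)$ with maximal compact $K$, run the continuity method within $K$-invariant potentials, and use the larger equivariant invariant $\alpha_K(M)>\tfrac23$ (for $k=3$ the torus plus hexagonal symmetry, and with a finer argument for $k=4,5$), while for the borderline cubic surface one must analyze the possible concentration of the measures $e^{-\frac23\phi_t}\,\omega^2$ that a failure of the estimate would force and rule it out by a rescaling/bubbling argument, extracting in the limit a degenerate object excluded by the classification. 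I expect this last step — closing the $C^0$ estimate for the del Pezzo surfaces with small $\alpha$-invariant, above all the equality case of the cubic surface — to be the principal obstacle, and precisely the point where reductivity of $\mathfrak{h}(M)$, through the availability of a large symmetry group, is what makes the estimate close.
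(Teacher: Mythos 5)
Your reduction via the Del Pezzo classification, the disposal of $\mathbb{P}^2$, $\mathbb{P}^1\times\mathbb{P}^1$ and the non-reductive $k=1,2$ cases, and the use of Tian's $\alpha$-invariant criterion for $k=3,4,5,7,8$ (equivariantly for small $k$) all match the paper's setup and its appeal to Theorem~\ref{alpha2} together with the bounds of Theorem~\ref{estima}. But there is a genuine gap exactly where you flag ``the principal obstacle'': your treatment of $k=6$, the cubic surfaces. You propose to run the Aubin--Yau continuity method on a fixed $M$ and to close the borderline $C^0$ estimate by ``a rescaling/bubbling argument, extracting in the limit a degenerate object excluded by the classification.'' This is a hope, not an argument, and it does not correspond to what Tian actually does. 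Worse, for a generic smooth cubic surface $\mathrm{Aut}(M)$ is trivial and Theorem~\ref{estima} records that there are cubics with $\alpha_G(M)=2/3$ for every compact $G\subset\mathrm{Aut}(M)$, so no equivariant enhancement of the $\alpha$-invariant criterion can rescue the Aubin continuity method in this case.

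The paper's strategy is structurally different and this difference is essential. Rather than Aubin's continuity in the parameter $t$ on a fixed manifold, Tian uses continuity in the \emph{moduli space} $\mathfrak{M}_k$ of complex structures: $\mathcal{KE}_k$ is nonempty by Tian--Yau, open by the Implicit Function Theorem (using $\mathfrak h(M)=0$), and closedness is the hard part. Closedness reduces to a uniform bound $\sup_i\sup_{M_i}\varphi_i\leq C$ along a sequence of K\"ahler--Einstein surfaces $(M_i,\omega_i)$ in $\mathfrak{M}_k$ converging to $M_\infty$. That bound is obtained in three steps, none of which appear in your proposal: (i) a \emph{partial $C^0$ estimate} $\inf_i\inf_{M_i}\rho_{m_0}(\omega_i)\geq c>0$ on the density-of-states functions, proved via the orbifold compactness theorem of Anderson/Bando--Kasue--Nakajima/Tian (which uses the dimension-2 identity $\int|\Rm|^2=\int|\Ric|^2+c_2$, and has no direct analogue in higher dimension) plus H\"ormander $L^2$ estimates; (ii) semicontinuity of complex singularity exponents to convert the partial $C^0$ estimate into a $\sup\varphi_i$ bound whenever $\alpha_{m_0,1}(M)>2/3$; and (iii) for the remaining borderline case $\alpha_{m_0,1}=2/3$, $\alpha_{m_0,2}>2/3$, an improved Harnack inequality $\frac1V\int(-\varphi_i)\omega_i^2\leq(2-\varepsilon)\sup\varphi_i+C$ derived from the fine structure of the algebraic K\"ahler potentials. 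The distinction between the one-section invariant $\alpha_{m,1}$ and the two-section invariant $\alpha_{m,2}$ is precisely the extra leverage that makes the cubic surface case close, and there is nothing in your proposal that plays this role. Until you can fill in a concrete mechanism for the $k=6$ estimate, the proposal does not constitute a proof of the statement.
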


In these notes we will give some ideas of the complicated proof of Theorem \ref{main}. First of all, let us note that all Fano surfaces are classified:

\begin{theorem}[Del Pezzo, Theorem 5.16, p.125 in \cite{fr}]\label{dp}A Fano surface $M$ is biholomorphic to one of the following: $\mathbb{CP}^2, \mathbb{CP}^1\times\mathbb{CP}^1$ or the blowup of $\mathbb{CP}^2$ at $1\leq k\leq 8$ distinct points in general position (which means that no $3$ are collinear, no $6$ lie on a conic and if $k=8$ then not all of them are contained in a cubic which is singular at (at least) one of the points).
\end{theorem}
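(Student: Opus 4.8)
The plan is to follow the classical route through the birational classification of algebraic surfaces: the input is that $-K_M$ is ample, and the output must be the explicit list in the statement. Everything hinges on the elementary remark that $M$ can carry no irreducible curve $C$ with $C^2 \leq -2$: by adjunction $-K_M\cdot C = C^2 + 2 - 2g(C) \leq C^2 + 2 \leq 0$, which contradicts the ampleness of $-K_M$ (an ample class meets every irreducible curve positively).

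\textbf{Step 1: rationality and the minimal model.} First I would pin down the birational type of $M$. Since $K_M^{-1}$ is ample, Kodaira vanishing gives $h^i(M,\mathcal{O}_M) = h^i(M, K_M\otimes K_M^{-1}) = 0$ for $i = 1,2$, so the irregularity and geometric genus vanish; moreover $h^0(M, mK_M) = 0$ for all $m\geq 1$ since $mK_M$ is anti-ample, so in particular the bigenus $P_2$ is zero. By Castelnuovo's rationality criterion $M$ is then a rational surface, hence admits a birational morphism $\pi\colon M\to M_0$ — a composition of contractions of $(-1)$-curves — onto a minimal rational surface $M_0$, which is $\mathbb{CP}^2$ or a Hirzebruch surface $\mathbb{F}_n$ with $n = 0$ or $n\geq 2$. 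The strict transform under $\pi$ of the negative section of $\mathbb{F}_n$ ($n\geq 2$) would be an irreducible curve of self-intersection $\leq -n\leq -2$, so $M_0$ must be $\mathbb{CP}^2$ or $\mathbb{F}_0 = \mathbb{CP}^1\times\mathbb{CP}^1$. Using the isomorphism $\mathrm{Bl}_p(\mathbb{CP}^1\times\mathbb{CP}^1)\cong\mathrm{Bl}_{\{q_1,q_2\}}\mathbb{CP}^2$, either $M\cong\mathbb{CP}^1\times\mathbb{CP}^1$ or $\pi$ exhibits $M$ as a blow-up of $\mathbb{CP}^2$ at finitely many points; the same "no $(-2)$-curve" principle, applied now to strict transforms of exceptional divisors, forces these points to be genuine distinct points of $\mathbb{CP}^2$ (none infinitely near another), say $k$ of them. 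Finally $(-K_M)^2 = K_M^2 = 9 - k$, and ampleness forces this to be positive, so $k\leq 8$.

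\textbf{Step 2: the general position condition (necessity).} Next I would show these $k$ points must be in general position, by contraposition. If a line passed through $3$ of them, its strict transform would have self-intersection $\leq 1 - 3 = -2$; if a conic passed through $6$, self-intersection $\leq 4 - 6 = -2$; and if $k = 8$ and a cubic singular at one of the points passed through all eight, the strict transform would have self-intersection $\leq 9 - 2^2 - 7 = -2$. Each of these is excluded by Step 1, which proves that the points must be in general position.

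\textbf{Step 3: sufficiency, and the main obstacle.} Conversely, one must prove that blowing up $k\leq 8$ points in general position really does produce an ample anticanonical class; this is where the genuine work lies. By the Nakai–Moishezon criterion it suffices to check $(-K_M)^2 = 9 - k > 0$, which is immediate, together with $-K_M\cdot C > 0$ for every irreducible curve $C\subset M$. For this one classifies the irreducible curves of negative self-intersection on such a blow-up: general position forces each to be a $(-1)$-curve, and these form an explicit finite list — strict transforms of lines through $2$ of the points, conics through $5$, nodal cubics, and finitely many higher-degree curves, the combinatorics being governed by the $E_k$ root lattice inside $\mathrm{Pic}(M)$ — on each of which $-K_M\cdot C = 1 > 0$. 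For an irreducible curve with $C^2\geq 0$ one writes $C\sim aH - \sum_i b_i E_i$ with $a > 0$ and deduces $-K_M\cdot C = 3a - \sum_i b_i > 0$ from irreducibility together with the general position of the points. Carrying out this curve-by-curve bookkeeping — rather than the rationality and minimal-model steps of Step 1, which are entirely standard — is the crux of the proof.
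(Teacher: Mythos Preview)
The paper does not prove this theorem at all: it is quoted as a classical result with a reference to Friedman's textbook (Theorem 5.16 in \cite{fr}), and is used as a black box to set up the case analysis that follows. So there is no ``paper's own proof'' to compare against.

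That said, your outline is a correct sketch of the standard classification argument, and is essentially the one found in Friedman. The key observation that $-K_M$ ample forbids irreducible curves with $C^2\leq -2$ is exactly right, and you deploy it correctly at each stage: to rule out $\mathbb{F}_n$ with $n\geq 2$ as a minimal model, to rule out infinitely near blow-up centers, and to derive the general-position constraints in Step~2. The rationality via Kodaira vanishing and Castelnuovo is clean. Your Step~3 correctly identifies the converse (that general position of $k\leq 8$ points guarantees ampleness of $-K_M$) as the part requiring real work, and the Nakai--Moishezon approach combined with the classification of $(-1)$-curves via the $E_k$ root system is the standard route. One small comment: in Step~3 the assertion that every irreducible curve of negative self-intersection is a $(-1)$-curve is precisely equivalent, by adjunction, to the statement that general position prevents $(-2)$-curves, so the argument is somewhat circular as phrased; the actual content is the explicit enumeration of all classes $C$ with $C^2 = -1$, $K_M\cdot C = -1$, and the verification that each is represented by an irreducible curve and no further negative curves exist --- this is the ``curve-by-curve bookkeeping'' you allude to, and it is indeed the crux.
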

Using this, one can restate the main theorem as
\begin{theorem}[Tian \cite{Ti2}]\label{main2}
All Fano surfaces admit K\"ahler-Einstein metrics except the blowup of $\mathbb{CP}^2$ at $1$ or $2$ distinct points.
\end{theorem}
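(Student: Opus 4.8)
The plan is to use the classification of Theorem~\ref{dp} and to argue the non-existence and the existence parts separately.

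For non-existence, let $M$ be the blowup of $\mathbb{CP}^2$ at one or two (necessarily general) points. The holomorphic vector fields on $M$ are exactly those on $\mathbb{CP}^2$ vanishing at the blown-up points, i.e.\ the elements of $\mathfrak{sl}(3,\mathbb{C})$ preserving the corresponding lines in $\mathbb{C}^3$; writing these out explicitly one sees that $\mathfrak{h}(M)$ is solvable and non-abelian --- it has an abelian nilradical (the ``translations'' $\mathbb{C}^2$, resp.\ $\mathbb{C}$) on which a subtorus acts with nonzero weights --- and hence is not reductive. Theorem~\ref{matt} then forbids a K\"ahler-Einstein metric on such an $M$. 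For every other Fano surface one checks directly that $\mathfrak{h}(M)$ is reductive: it is $\mathfrak{sl}(3,\mathbb{C})$ for $\mathbb{CP}^2$, $\mathfrak{sl}(2,\mathbb{C})^{\oplus 2}$ for $\mathbb{CP}^1\times\mathbb{CP}^1$, the abelian algebra $\mathbb{C}^2$ for the blowup at $3$ points, and $0$ for the blowups at $4\le k\le 8$ points, so Matsushima imposes no obstruction and the metrics have to be constructed.

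For $\mathbb{CP}^2$ the Fubini--Study metric, and for $\mathbb{CP}^1\times\mathbb{CP}^1$ the product of two round metrics, are K\"ahler-Einstein after normalizing so that $\Ric(\omega)=\omega$, which handles these two cases at once. For the blowup $M$ of $\mathbb{CP}^2$ at $3\le k\le 8$ general points I would run the continuity method: fixing $\omega_0\in c_1(M)$ with Ricci potential $h_0$, so $\Ric(\omega_0)-\omega_0=\ddbar h_0$, consider
\[
(\omega_0+\ddbar\phi_t)^2 = e^{h_0-t\phi_t}\,\omega_0^2,\qquad \omega_0+\ddbar\phi_t>0,\quad t\in[0,1],
\]
a solution at $t=1$ being the desired metric. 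Solvability at $t=0$ is Yau's theorem, openness of the set of solvable $t$ is the implicit function theorem, and closedness reduces, via the second- and higher-order estimates of Yau and Aubin, to a uniform $C^0$ bound on $\phi_t$. To obtain it I would invoke Tian's criterion: running the method among $G$-invariant metrics, where $G\subset\mathrm{Aut}(M)$ is a maximal compact subgroup, the bound --- hence closedness, hence existence --- follows once
\[
\alpha_G(M) \;>\; \frac{2}{3} \;=\; \frac{n}{n+1},
\]
where $\alpha_G(M)$ is the supremum of those $\alpha>0$ for which $\int_M e^{-\alpha(\phi-\sup_M\phi)}\,\omega_0^2$ stays bounded over all $G$-invariant $\omega_0$-plurisubharmonic $\phi$. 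Thus the whole existence problem reduces to bounding $\alpha_G(M)$ from below for the six remaining surfaces, with $G$ containing the torus $(S^1)^2$ when $k=3$ (where $M$ is toric) and $G$ finite when $k\ge4$.

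The step I expect to be the main obstacle is exactly these $\alpha$-invariant bounds. Estimating $\alpha_G(M)$ from below amounts to bounding from below the log canonical thresholds of all $G$-invariant effective $\mathbb{Q}$-divisors in $|-K_M|$, which at this level is carried out by hand via explicit integral estimates localized near the configuration of $(-1)$-curves, and two features make it delicate. First, the symmetry is genuinely needed: already for the degree-$6$ surface ($k=3$) there are anticanonical divisors supported on the $(-1)$-curves with log canonical threshold $\tfrac12$, so $\alpha_{\{1\}}(M)\le\tfrac12<\tfrac23$, and one has to use torus invariance to rule such divisors out. Second, for the middle degrees $k=4,5,6$ (degrees $5,4,3$) the inequality is borderline --- $\alpha_G(M)$ equal to, or only marginally above, $\tfrac23$ --- so one must combine the residual finite automorphism group with a sharpened version of Tian's criterion that only needs to exclude a short explicit list of extremal divisor configurations (equivalently, an $\alpha$-invariant attached to pairs of potentials). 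Once all these estimates are in place, the continuity-method machinery runs uniformly over the remaining surfaces and Theorem~\ref{main2} follows.
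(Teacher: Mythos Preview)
Your treatment of the non-existence part and of $\mathbb{CP}^2$, $\mathbb{CP}^1\times\mathbb{CP}^1$ matches the paper's. For the blowup at $3$ or $4$ points your strategy is also essentially the one that works (Tian--Yau, Siu, Nadel): there one does have $\alpha_G(M)>2/3$ for a suitable compact $G$, and Aubin's continuity method closes.

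For $5\le k\le 8$, however, your plan diverges from the paper's in a way that matters. The paper does \emph{not} run Aubin's continuity method on a fixed surface; it runs a continuity argument in the moduli space $\mathfrak{M}_k$: one starts from a surface known to carry a K\"ahler--Einstein metric (Tian--Yau), shows openness of $\mathcal{KE}_k$ by the implicit function theorem, and proves closedness by taking a sequence of K\"ahler--Einstein surfaces $(M_i,\omega_i)$ converging in $\mathfrak{M}_k$ and extracting a limit metric. The key analytic input is a \emph{partial $C^0$ estimate} $\inf_i\inf_{M_i}\rho_{m_0}(\omega_i)\ge c>0$, obtained from orbifold Gromov--Hausdorff compactness of genuine K\"ahler--Einstein metrics; this is what allows the $\alpha_{m,2}$-invariant to enter (Steps~2 and~3).

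The substantive gap in your proposal is the case $k=6$. As recorded in Theorem~\ref{estima}, there exist cubic surfaces $M\in\mathfrak{M}_6$ with $\alpha_G(M)=2/3$ for \emph{every} compact $G\subset\mathrm{Aut}(M)$, so Tian's criterion $\alpha_G>2/3$ provably fails there, and no amount of symmetry saves it (your assertion that $k=4,5$ are also borderline is incorrect: for those one has $\alpha_G\ge 1$). Your fallback---a ``sharpened criterion'' using an $\alpha$-invariant of pairs---is indeed the idea behind $\alpha_{m,2}$, but in the paper that invariant is not used as a standalone criterion along Aubin's path. Its role is tied to the partial $C^0$ estimate, which in turn relies on compactness of \emph{Einstein} metrics; the metrics $\omega_t$ along Aubin's path satisfy $\Ric(\omega_t)=t\omega_t+(1-t)\omega_0$ and are not Einstein, so the orbifold compactness theorem does not apply to them. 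Without explaining how you would obtain a partial $C^0$ estimate (or an equivalent replacement) along the Aubin path, the appeal to $\alpha_{m,2}$ is not a proof but a hope. This is precisely why the paper switches to the moduli-space continuity method.
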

The reason is that one can easily compute, using the classification theorem \ref{dp}, that all the Fano surfaces have reductive $\mathfrak{h}(M)$ except the blowup of $\mathbb{CP}^2$ at $1$ or $2$ distinct points. For example, the identity components of the automorphism groups of the blowup of $\mathbb{CP}^2$ at $1$ and $2$ point are isomorphic to the groups of complex matrices
$$\left(\begin{array}{ccc} * & * & * \\ 0 & * & * \\ 0 & * & * \end{array}\right)/\mathbb{C}^*, \quad \left(\begin{array}{ccc} * & 0 & * \\ 0 & * & * \\ 0 & 0 & * \end{array}\right)/\mathbb{C}^*,$$
whose Lie algebras are not reductive. On the other hand if one blows up $\mathbb{CP}^2$ at $4$ or more general points then the resulting manifold has $\mathfrak{h}(M)=0$.\\

\noindent{\bf Acknowledgments. }I am grateful to Z. Lu, R. Seyyedali, J. Song, G. Sz\'ekelyhidi and B. Weinkove for very useful comments, and to D.H. Phong for encouragement. These notes were originally written for my talks at the Informal complex geometry and PDE seminar at Columbia University in December 2009. I wish to thank all the participants in the seminar. 
\setcounter{equation}{0}
\section{Preliminary reductions}
First of all, $\mathbb{CP}^2$ and $\mathbb{CP}^1\times\mathbb{CP}^1$ admit explicit K\"ahler-Einstein metrics, the Fubini-Study metrics. We have seen above that the blowup of $\mathbb{CP}^2$ at $1$ or $2$ points have nonreductive $\mathfrak{h}(M)$ and so they do not admit a K\"ahler-Einstein metric by Matushima's theorem \ref{matt}.

All Fano surfaces $M$ which are the blowup of $\mathbb{CP}^2$ at $3$ or $4$ points in general position are biholomorphic to each other: in fact, any $3$ (or $4$) points in $\mathbb{CP}^2$ in general position can be mapped to any other $3$ (or $4$) by a biholomorphism of $\mathbb{CP}^2$, which induces a biholomorphism of the two blowups. So we can talk about ``the blowup of $\mathbb{CP}^2$ at $3$ or $4$ points'', and these admit K\"ahler-Einstein metrics thanks to the work of Tian-Yau \cite[Theorem 3.3]{TY} (see theorem \ref{estima} below, and also Siu \cite{Si} and Nadel \cite{Nad} for the case of $3$ points).

The main theorem \ref{main2} is thus reduced to proving:
\begin{theorem}\label{main3}If $M$ is a blowup of $\mathbb{CP}^2$ at $5\leq k\leq 8$ points in general position, then $M$ admits a K\"ahler-Einstein metric.
\end{theorem}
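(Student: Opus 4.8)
The plan is to run the continuity method. Fix a reference form $\omega_0 \in c_1(M)$ with Ricci potential $f_0$, so $\Ric(\omega_0) - \omega_0 = \ddbar f_0$, and for $t \in [0,1]$ look for $\vp_t$ with $\omega_t := \omega_0 + \ddbar\vp_t > 0$ solving $\Ric(\omega_t) = t\omega_t + (1-t)\omega_0$, equivalently the complex Monge--Amp\`ere equation $(\omega_0 + \ddbar\vp_t)^2 = e^{f_0 - t\vp_t}\,\omega_0^2$. The set of $t$ for which this is solvable is nonempty (at $t = 0$ it is Yau's theorem) and open by the implicit function theorem; moreover, since $5 \le k \le 8$ points in general position force $\mathfrak{h}(M) = 0$ -- so Matsushima's condition is vacuous here and the linearized operator $\triangle_{\omega_t} + t$ has no kernel up to and including $t = 1$ -- openness holds on all of $[0,1]$. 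By the standard elliptic bootstrap -- Yau's $C^2$ estimate, the Calabi--Aubin $C^3$ estimate, and Schauder theory -- closedness, hence the existence of a K\"ahler--Einstein metric, reduces to a single uniform a priori bound $\|\vp_t\|_{C^0(M)} \le C$ independent of $t$.

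First I would extract this $C^0$ estimate from the variational side: $\vp_t$ is a critical point of Ding's functional, and a uniform bound along the path follows once that functional is proper on the space of $G$-invariant K\"ahler potentials for a suitable finite $G \subset \mathrm{Aut}(M)$. By Theorem \ref{estima} of Tian--Yau (with Siu's refinement allowing a group), properness -- and hence the K\"ahler--Einstein metric -- holds provided $\alpha_G(M) > \frac{n}{n+1} = \frac{2}{3}$. By Demailly's theorem $\alpha_G(M)$ equals the global $G$-invariant log canonical threshold of $-K_M$, so the task becomes algebro-geometric: for each del Pezzo surface of degree $d = 9 - k \in \{1,2,3,4\}$, estimate this threshold using the explicit birational geometry -- the $(-1)$-curves, the lines and conics through the (general-position) blown-up points, the singular members of the anticanonical system and their multiplicities -- while choosing $G$ among the Weyl-group/automorphism symmetries so as to exclude the worst divisors.

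The main obstacle is that $\frac{2}{3}$ is exactly the borderline value, and for some configurations the log canonical threshold attains it: already for $d = 3$ when the six points lie so that the cubic surface has an Eckardt point (three coplanar concurrent lines, giving an anticanonical member with an ordinary triple point), and -- far more seriously -- for the small degrees $d = 2$ and $d = 1$, where $|-K_M|$ is only a net (a double cover of $\mathbb{CP}^2$) or a pencil with a base point (with $|-2K_M|$ giving a double cover of a quadric cone), so its singular members are forced to concentrate near the blown-up points and $\alpha_G(M)$ cannot be pushed strictly past $\frac{2}{3}$. For these one must abandon the clean $\alpha$-invariant criterion and run Tian's finer argument for the degenerate case: if the $C^0$ bound fails along some $t_j \to 1$, renormalize the potentials and use the Moser--Trudinger/energy inequality together with H\"ormander's $L^2$-estimates to produce, as in Nadel's method, a proper multiplier ideal subscheme $V \subsetneq M$; Nadel vanishing and adjunction then pin down the numerical type of $V$ -- a connected curve of tightly constrained arithmetic genus, or a point -- and each possibility is excluded by hand using the classification of Theorem \ref{dp} and the general-position hypothesis. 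Carrying out this exclusion airtight for the degree-one surface is the delicate technical heart of the proof.
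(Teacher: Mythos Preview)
Your strategy---run Aubin's continuity path on a single fixed $M$ and close it by an $\alpha$-invariant or multiplier-ideal argument---is not the paper's. The paper runs continuity \emph{in the moduli space} $\mathfrak{M}_k$: it shows the subset $\mathcal{KE}_k$ of K\"ahler--Einstein structures is nonempty (Tian--Yau, for special symmetric configurations), open (implicit function theorem, since $\mathfrak{h}(M)=0$), and then proves closedness by taking a sequence $(M_i,\omega_i)$ of K\"ahler--Einstein surfaces with $M_i\to M_\infty$ in $\mathfrak{M}_k$ and producing a K\"ahler--Einstein metric on the limit. Closedness is reduced to a uniform bound on $\sup_{M_i}\vp_i$, and the decisive analytic input is a \emph{partial $C^0$ estimate}: a uniform positive lower bound on the Bergman density $\rho_{m_0}(\omega_i)$, proved via Gromov--Hausdorff orbifold compactness of K\"ahler--Einstein surfaces (Anderson, Bando--Kasue--Nakajima, Tian) together with H\"ormander $L^2$-estimates on the orbifold limit. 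This replaces $\vp_i$, up to a bounded error, by an \emph{algebraic} potential built from plurianticanonical sections, and the borderline case is then handled not by multiplier ideals but by a refined two-section invariant $\alpha_{m,2}(M)>2/3$ (Theorem~\ref{alpha3}) feeding into an improved Harnack inequality.

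You have also inverted which degrees are hard. By the estimates collected in Theorem~\ref{estima}, degrees $d=1$ and $d=2$ ($k=8,7$) satisfy $\alpha(M)\geq 5/6$ and $\alpha(M)\geq 3/4$, so Tian's criterion $\alpha>2/3$ (Theorem~\ref{alpha2}) applies directly and no refinement is needed there; your concern that $|{-K_M}|$ is only a pencil or a net is misplaced. The only genuinely borderline case is $d=3$ ($k=6$): cubic surfaces with an Eckardt point have $\alpha_G(M)=2/3$ for \emph{every} compact $G\subset\mathrm{Aut}(M)$, and it is precisely here that the paper invokes $\alpha_{m,2}>2/3$ and the partial-$C^0$ machinery. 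A Nadel-style exclusion of multiplier-ideal subschemes is a legitimate alternative line (developed by Nadel and later Heier for some del Pezzos), but it is not what this paper does, your sketch aims it at the wrong targets, and for the Eckardt cubics you would still have to get past the same $2/3$ threshold that blocked your $\alpha$-invariant argument.
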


To prove this theorem, we will use a continuity argument in the space of Fano manifolds diffeomorphic to the blowup of $\mathbb{CP}^2$ at $5\leq k\leq 8$ points. More precisely, for $5\leq k\leq 8$ let us define
$$\mathfrak{M}_k=\{k\textrm{-tuples of points in }\mathbb{CP}^2\textrm{ in general position}\}/\textrm{Aut}(\mathbb{CP}^2),$$
which is the same as the set of isomorphism classes of complex structures on the blowup of $\mathbb{CP}^2$ at $k$ points with positive first Chern class (we could similarly define $\mathfrak{M}_k$ for $1\leq k\leq 4$, but it would be just one point). The set $\mathfrak{M}_k$ is a (noncompact) complex manifold, with the natural induced complex structure, and is easily seen to be connected. We will often abuse notation and say ``$M$ in $\mathfrak{M}_k$'', meaning that $M$ is a Fano surface which is the blowup of $\mathbb{CP}^2$ at $k$ points in general position.

If we call $\mathcal{KE}_k$ the subset of $\mathfrak{M}_k$ of all complex structures that admit a K\"ahler-Einstein metric, then we need to show that $\mathcal{KE}_k=\mathfrak{M}_k$. We have the following results:
\begin{theorem}[Tian-Yau \cite{TY}]\label{tianyau} For each $5\leq k\leq 8$, the set $\mathcal{KE}_k$ is nonempty.
\end{theorem}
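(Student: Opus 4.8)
The plan is to reduce the statement, for each $k$, to a single sufficient criterion. Since $\mathcal{KE}_k$ consists, by definition, of those complex structures in $\mathfrak{M}_k$ carrying a K\"ahler-Einstein metric, it is enough to exhibit, for every $5\leq k\leq 8$, one $k$-tuple of points of $\mathbb{CP}^2$ in general position whose blowup $M$ admits such a metric. The tool I would use is Theorem \ref{estima}, the sufficient condition of Tian-Yau \cite[Theorem 3.3]{TY} (whose three-point analogue is due to Siu \cite{Si} and Nadel \cite{Nad}): a Fano surface $M$ carries a K\"ahler-Einstein metric as soon as Tian's invariant satisfies $\alpha(M)>\tfrac{2}{3}$, which is $\tfrac{n}{n+1}$ for $n=\dim_{\mathbb{C}}M=2$. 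Here $\alpha(M)$ is the supremum of the exponents $\alpha>0$ for which $\int_M e^{-\alpha(\varphi-\sup_M\varphi)}\,\omega^2$ is bounded independently of $\varphi$, as $\varphi$ runs over all $\omega$-plurisubharmonic functions with $\omega\in c_1(M)$ a fixed K\"ahler form; one may equally work with the $G$-invariant variant $\alpha_G(M)$ for a compact subgroup $G\subset\mathrm{Aut}(M)$. By the algebro-geometric description of $\alpha$, the task becomes purely of algebraic geometry: produce a smooth del Pezzo surface $M$ of each degree $d=9-k\in\{1,2,3,4\}$, realised as an iterated blowup of $\mathbb{CP}^2$ at points in general position, with $\alpha(M)>\tfrac23$, i.e. with $\mathrm{lct}(M,D)>\tfrac23$ for every effective $\mathbb{Q}$-divisor $D\equiv -K_M=c_1(M)$.

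Concretely I would take $M$ to be a highly symmetric representative of each deformation class --- a diagonal cubic surface in $\mathbb{CP}^3$ for $d=3$, the intersection of two diagonal quadrics in $\mathbb{CP}^4$ for $d=4$, the double cover of $\mathbb{CP}^2$ branched along the Fermat quartic for $d=2$, and the corresponding weighted hypersurface model when $d=1$ --- each obtained by blowing up $\mathbb{CP}^2$ at points in general position (a Zariski-open, hence nonempty, condition, so each is a genuine point of $\mathfrak{M}_k$) and each carrying a large finite automorphism group $G$. One then bounds $\alpha(M)$, or $\alpha_G(M)$ when the plain invariant happens to equal $\tfrac23$ on such a special member, from below by a case analysis of the worst singularities an anticanonical $\mathbb{Q}$-divisor $D$ can have at a point $p\in M$. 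The three ingredients are: (i) since $(-K_M)^2=d$ is small, $\mathrm{mult}_pD$ is controlled at every $p$ by intersecting $D$ with a general member of $|-K_M|$ and with the lines and conics through $p$; (ii) the only curves that can push $\mathrm{lct}(M,C)$ toward $\tfrac23$ lie among the finitely many $(-1)$-curves and the members of the conic/cubic pencils, and one checks these and their combinations one at a time; (iii) when $M$ is one of the symmetric models, $G$-invariance of $D$ forces its support to spread over a large orbit, which rules out the remaining borderline configurations. This is exactly the del Pezzo $\alpha$-invariant computation carried out by Tian for surfaces (and later systematised by Cheltsov and Cheltsov-Shramov); for $d\geq 3$ it is contained in \cite{TY}.

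I expect step (iii), the borderline analysis, to be the main obstacle. For the lowest degrees $d=1,2$ the invariant is comfortably large on a generic member (it is $\tfrac56$ and $\tfrac34$ respectively), so essentially any choice works; but for $d=3,4$ the value $\alpha(M)=\tfrac23$ is actually attained on special surfaces --- for instance cubic surfaces possessing an Eckardt point, where three of the lines are concurrent --- so one must either deform away from those loci or pass to $\alpha_G(M)$ and separately exclude a $G$-invariant destabilising divisor. Keeping track of the incidences among the $(-1)$-curves, of which there are up to $240$ on the degree-$1$ surface, and bounding the multiplicities of anticanonical $\mathbb{Q}$-divisors along configurations of these curves, is where the real work of the proof sits; once the inequality $\alpha(M)>\tfrac23$ is in hand, the existence of the K\"ahler-Einstein metric is delivered by the black box Theorem \ref{estima}. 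Finally, note that this argument settles only that $\mathcal{KE}_k\neq\varnothing$; that $\mathcal{KE}_k$ exhausts $\mathfrak{M}_k$, hence Theorem \ref{main3}, will follow by combining this non-emptiness with the openness and closedness of $\mathcal{KE}_k\subset\mathfrak{M}_k$ proved in the later sections.
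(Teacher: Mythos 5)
Your proposal is correct and follows essentially the same route as the paper's own (very brief) treatment: the paper merely remarks that Tian--Yau prove nonemptiness ``by constructing K\"ahler-Einstein metrics on manifolds obtained by blowing up some sufficiently symmetric configuration of points (see also theorem \ref{estima})'', and you unpack exactly that strategy. Two small points of calibration. First, the sufficient criterion you invoke ($\alpha(M)>n/(n+1)$ or its $G$-invariant counterpart implying existence of a K\"ahler--Einstein metric) is Theorem \ref{alpha2}, not Theorem \ref{estima}; the latter is the list of $\alpha$- and $\alpha_G$-estimates that you feed into it. Second, your diagonal (Fermat) cubic for $d=3$ does have Eckardt points, hence $\alpha(M)=2/3$ exactly, so the plain invariant fails the strict inequality there; you correctly anticipate this and note that one must pass to $\alpha_G$ --- indeed this is precisely what Tian--Yau did, using that the symmetry group of the Fermat cubic permutes the Eckardt points transitively, so no $G$-invariant anticanonical $\mathbb{Q}$-divisor can concentrate at a single Eckardt point, yielding $\alpha_G>2/3$. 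Worth being aware that, as Theorem \ref{estima} records, there also exist (other) cubic surfaces with $\alpha_G(M)=2/3$ for \emph{every} compact $G$; the choice of symmetric model therefore matters, but the Fermat one works, and alternatively one could pick a cubic without Eckardt points, where $\alpha(M)\geq 3/4$. For $k=5,7,8$ your argument is unproblematic. So the proposal is sound and matches the paper's intent; it just conflates the theorem labels and would benefit from making the $k=6$ choice explicit.
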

This is achieved by constructing K\"ahler-Einstein metrics on manifolds obtained by blowing up some sufficiently symmetric configuration of points (see also theorem \ref{estima}).
\begin{lemma}[Lemma 1.3 in \cite{Ti2}] For each $5\leq k\leq 8$, the set $\mathcal{KE}_k$ is open in $\mathfrak{M}_k$.
\end{lemma}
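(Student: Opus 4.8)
The plan is to deduce openness from the implicit function theorem in H\"older spaces, applied at the given K\"ahler-Einstein solution; the decisive point is that for $5\leq k\leq 8$ a blowup of $\mathbb{CP}^2$ at $k$ points in general position has $\mathfrak{h}(M)=0$, which makes the linearized operator invertible. Fix $M_0\in\mathcal{KE}_k$ and let $\omega_0$ be a K\"ahler-Einstein metric on it, so $\Ric(\omega_0)=\omega_0$ and $[\omega_0]=c_1(M_0)$. A neighborhood of $M_0$ in $\mathfrak{M}_k$ can be realized as a smooth (indeed holomorphic) family of complex structures $J_t$, for $t$ in a ball $B\subset\mathbb{C}^m$ about the origin (with $m=\dim_{\mathbb{C}}\mathfrak{M}_k$), on the fixed underlying smooth $4$-manifold $X$, with $J_0$ the complex structure of $M_0$. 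Since the K\"ahler cone is open and $c_1(M_t)>0$ for all $t$, one can choose, using Hodge theory for $J_t$, a smooth family of $J_t$-K\"ahler forms $\omega_t\in c_1(M_t)$ with $\omega_t|_{t=0}=\omega_0$; let $h_t\in C^\infty(X)$ be the Ricci potentials normalized by $\Ric(\omega_t)-\omega_t=\sqrt{-1}\partial_t\overline{\partial}_t h_t$ and $\int_X e^{h_t}\omega_t^n=\int_X\omega_t^n$, so that $h_0\equiv 0$ and $t\mapsto h_t$ is smooth.

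A K\"ahler-Einstein metric on $M_t$ in the class $c_1(M_t)$ has the form $\omega_t+\sqrt{-1}\partial_t\overline{\partial}_t\phi$ with $\phi\in C^\infty(X)$, $\omega_t+\sqrt{-1}\partial_t\overline{\partial}_t\phi>0$, solving the complex Monge--Amp\`ere equation $(\omega_t+\sqrt{-1}\partial_t\overline{\partial}_t\phi)^n=e^{h_t-\phi}\omega_t^n$, and for $t=0$ the function $\phi\equiv0$ is a solution. Fixing $\alpha\in(0,1)$, I would consider the map $\mathcal{F}\colon\mathcal{U}\to C^{0,\alpha}(X)$, where $\mathcal{U}$ is the open subset of $B\times C^{2,\alpha}(X)$ on which $\omega_t+\sqrt{-1}\partial_t\overline{\partial}_t\phi>0$, given by
\[
\mathcal{F}(t,\phi)=\log\frac{(\omega_t+\sqrt{-1}\partial_t\overline{\partial}_t\phi)^n}{\omega_t^n}-h_t+\phi .
\]
This is smooth in $(t,\phi)$ and $\mathcal{F}(0,0)=0$, and by the usual formula for the first variation of $\log$ of a Monge--Amp\`ere determinant its partial differential in $\phi$ at $(0,0)$ is the operator $\psi\mapsto\Delta_{\omega_0}\psi+\psi$, where $\Delta_{\omega_0}$ is the complex Laplacian (on functions) of the K\"ahler-Einstein metric. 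On a K\"ahler-Einstein Fano manifold a standard Bochner-type argument gives $\lambda_1(-\Delta_{\omega_0})\geq 1$, with equality exactly when there is a nonzero holomorphic vector field; since $\mathfrak{h}(M_0)=0$ this forces $\lambda_1(-\Delta_{\omega_0})>1$, so $\Delta_{\omega_0}+\mathrm{id}$ has trivial kernel, and being elliptic and $L^2(\omega_0)$-self-adjoint it is an isomorphism $C^{2,\alpha}(X)\to C^{0,\alpha}(X)$.

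The implicit function theorem then produces a neighborhood $B'\ni0$ in $B$ and a smooth map $t\mapsto\phi_t\in C^{2,\alpha}(X)$ with $\phi_0=0$ and $\mathcal{F}(t,\phi_t)=0$ for all $t\in B'$. After shrinking $B'$ the positivity $\omega_t+\sqrt{-1}\partial_t\overline{\partial}_t\phi_t>0$ persists (it holds at $t=0$ and is an open condition), and a standard elliptic bootstrap on the Monge--Amp\`ere equation upgrades $\phi_t$ to $C^\infty(X)$. Hence $\omega_t+\sqrt{-1}\partial_t\overline{\partial}_t\phi_t$ is a K\"ahler-Einstein metric on $M_t$ for every $t\in B'$, so $B'\subset\mathcal{KE}_k$, which shows that $\mathcal{KE}_k$ is open in $\mathfrak{M}_k$.

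I expect the only genuine obstacle to be the invertibility of the linearized operator, i.e.\ the strict bound $\lambda_1(-\Delta_{\omega_0})>1$: the inequality $\lambda_1\geq1$ comes from integrating the Bochner formula for $|\nabla f|^2$ against a first eigenfunction $f$ and using $\Ric(\omega_0)=\omega_0$, while equality would make the $(1,0)$-gradient of $f$ a nonzero holomorphic vector field, contradicting $\mathfrak{h}(M_0)=0$; this is exactly the input that fails on the blowups of $\mathbb{CP}^2$ at $1$ or $2$ points. Everything else --- the smooth dependence on $t$ of $J_t$, $\omega_t$ and $h_t$ so that $\mathcal{F}$ is a smooth map of Banach manifolds, and the regularity bootstrap --- is routine, and in particular no global a priori estimates for the Monge--Amp\`ere equation are needed, since openness here is a purely local statement.
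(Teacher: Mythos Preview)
Your proof is correct and follows exactly the approach the paper indicates: the paper does not give a detailed argument but simply states that the lemma ``is a simple consequence of the Implicit Function Theorem, since any manifold $M$ in $\mathfrak{M}_k$ has no nonzero holomorphic vector fields.'' You have fleshed out precisely this sketch, with the invertibility of $\Delta_{\omega_0}+\mathrm{id}$ coming from the Lichnerowicz--Matsushima eigenvalue bound together with $\mathfrak{h}(M_0)=0$.
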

This is a simple consequence of the Implicit Function Theorem, since any manifold $M$ in $\mathfrak{M}_k$ has no nonzero holomorphic vector fields. Theorem \ref{main3} is then reduced to showing:
\begin{theorem}\label{main4} For each $5\leq k\leq 8$, the set $\mathcal{KE}_k$ is closed in $\mathfrak{M}_k$.
\end{theorem}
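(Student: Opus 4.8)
The plan is to prove closedness by a compactness argument: take a sequence $M_j$ in $\mathcal{KE}_k$ converging in $\mathfrak{M}_k$ to some $M_\infty$, equip each $M_j$ with its K\"ahler--Einstein metric $\omega_j$ (normalized so that $[\omega_j]=c_1(M_j)$, hence $\Ric(\omega_j)=\omega_j$), and show that, after passing to a subsequence, the $(M_j,\omega_j)$ degenerate in a controlled way whose only possible outcome is a K\"ahler--Einstein metric on $M_\infty$ itself. Equivalently, this is the statement that running the continuity path $\Ric(\omega_t)=t\omega_t+(1-t)\omega_0$ on $M_\infty$ one has a uniform bound on the potentials up to $t=1$; the naive route to such a bound via Tian's $\alpha$-invariant is not available once $k\geq 5$, which is exactly why the degeneration analysis is needed. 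By the a priori estimates of Yau and Calabi the convergence of the $\omega_j$ is smooth wherever the metrics do not degenerate, so the real issue is the zeroth-order one of controlling \emph{where} and \emph{how} they degenerate.

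First I would collect the geometric bounds available along the sequence. The volume is the fixed topological number $\int_{M_j}c_1(M_j)^2=9-k$; since $\Ric(\omega_j)=\omega_j>0$, Bonnet--Myers gives a uniform diameter bound; and because the traceless Ricci curvature of a K\"ahler--Einstein surface vanishes, the Gauss--Bonnet and signature formulas express $\int_{M_j}|\Rm|^2\,\omega_j^2$ purely in terms of $c_1^2$ and $c_2$, hence it too is uniformly bounded. A non-collapsing bound $\vol(B_r(x))\geq v\,r^4$ also holds: every del Pezzo surface is covered by rational curves $C$ whose $\omega_j$-area is the intersection number $c_1(M_j)\cdot[C]$, bounded above by $9-k$ and below by $1$, and the monotonicity formula for these holomorphic curves bounds the volumes of small balls from below. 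Given these four inputs, the convergence theory of Anderson, Bando--Kasue--Nakajima and Tian for Einstein $4$-manifolds applies: a subsequence of $(M_j,\omega_j)$ converges in the Gromov--Hausdorff sense to a compact K\"ahler--Einstein orbifold $Y$ with finitely many singular points, each modelled on $\mathbb{C}^2/\Gamma$ for a finite subgroup $\Gamma\subset U(2)$, with $C^\infty$ convergence away from those points.

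The heart of the matter is then to identify $Y$ with $M_\infty$. Rescaling the metrics near a singular point of $Y$ produces a complete Ricci-flat ALE K\"ahler surface; such spaces are classified as minimal resolutions of $\mathbb{C}^2/\Gamma$, and combining this with an accounting of Euler characteristics and signatures bounds the number and type of the orbifold points and shows that $-K_Y$ is ample in the orbifold sense. Next one establishes the key analytic input, a uniform lower bound for the Bergman kernels of $(M_j,K_{M_j}^{-m})$ for a fixed $m$ (a ``partial $C^0$ estimate''), obtained from orbifold $L^2$-estimates for $\db$ on $Y$ together with the smooth convergence off the singular set; this embeds the $M_j$ uniformly into a fixed $\mathbb{CP}^N$ by pluri-anticanonical sections and realizes $Y$ as the algebro-geometric (flat) limit of the $M_j$. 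But the $M_j$ converge in $\mathfrak{M}_k$ to $M_\infty$, the blowup of $\mathbb{CP}^2$ at $k$ points \emph{in general position}, and the classification Theorem~\ref{dp} shows precisely that no degeneration of such a point configuration acquires a $(-2)$-curve or a worse singularity; hence the limit is forced to be smooth and biholomorphic to $M_\infty$. Since $M_\infty$ carries no nonzero holomorphic vector fields there is no further ambiguity, and $Y=M_\infty$ inherits the limiting K\"ahler--Einstein metric, proving Theorem~\ref{main4}.

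I expect this last identification — establishing the partial $C^0$ estimate and ruling out orbifold degenerations — to be the main obstacle. It is tractable in complex dimension two precisely because the possible bubbles and orbifold singularities are completely classified and because the general-position hypothesis built into $\mathfrak{M}_k$ is exactly what forbids singular del Pezzo surfaces from appearing as Gromov--Hausdorff limits; in higher dimensions the analogous step requires the much deeper Donaldson--Sun theory. Everything else (openness of $\mathcal{KE}_k$, the topological and Myers-type a priori bounds, and the passage from a partial $C^0$ estimate to full smooth convergence) is comparatively routine.
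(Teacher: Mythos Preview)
Your outline correctly assembles the compactness ingredients (uniform volume, diameter, $L^2$ curvature bounds, the Anderson--Bando--Kasue--Nakajima--Tian orbifold convergence, and the partial $C^0$ estimate), but the decisive step---identifying the Gromov--Hausdorff limit $Y$ with $M_\infty$---contains a genuine gap. You are conflating two distinct limits: the smooth convergence of complex structures $J_i\to J_\infty$ on the fixed underlying manifold (which gives $M_\infty$ by hypothesis), and the Gromov--Hausdorff convergence of the K\"ahler--Einstein metrics $\omega_i$ (which gives $Y$). These are a priori unrelated, and the content of the theorem is precisely that they agree. The smoothness of $M_\infty$, i.e.\ the fact that the limiting point configuration stays in general position, says nothing about the behavior of the metrics $\omega_i$: they could still degenerate (equivalently $\sup_{M_i}\varphi_i\to\infty$) even while $J_i\to J_\infty$ smoothly, and then $Y$ would be a genuinely singular K\"ahler--Einstein orbifold not biholomorphic to $M_\infty$. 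Concretely, the partial $C^0$ estimate realizes $Y$ as a flat limit of the $M_j$ in $\mathbb{CP}^N$ \emph{via the K\"ahler--Einstein orthonormal embeddings}, whereas $M_\infty$ is the limit \emph{via the reference embeddings}; these two embeddings of $M_j$ differ by a possibly divergent sequence in $PGL(N_m)$, so the two limits need not coincide. Your appeal to Theorem~\ref{dp} only controls the second limit, not the first.

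The paper closes this gap by an entirely different mechanism and never attempts to identify $Y$ with $M_\infty$. The orbifold limit is used solely to prove the partial $C^0$ estimate (Step~1), which approximates $\varphi_i-\sup_{M_i}\varphi_i$ by an algebraic potential. The remaining bound on $\sup_{M_i}\varphi_i$ is then obtained by combining the partial $C^0$ estimate with the algebraic $\alpha$-invariants $\alpha_{m,1}(M_\infty)$ and $\alpha_{m,2}(M_\infty)$ (Theorems~\ref{estima} and~\ref{alpha3}): semicontinuity of complex singularity exponents handles the case $\alpha_{m,1}>2/3$ (Step~2), and when $\alpha_{m,1}=2/3$---which actually occurs for $k=6$---an improved Harnack inequality exploiting $\alpha_{m,2}>2/3$ is needed (Step~3, Proposition~\ref{improve}). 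Your claim that ``the passage from a partial $C^0$ estimate to full smooth convergence is comparatively routine'' is exactly where the real work lies; Steps~2 and~3 are the technical heart of the argument, and your proposal provides no substitute for them.
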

This is because we then have that $\mathcal{KE}_k$ is nonempty, open and closed in 
$\mathfrak{M}_k$, which is itself connected, and it must then coincide with $\mathfrak{M}_k$.
To show that $\mathcal{KE}_k$ is closed in $\mathfrak{M}_k$, we need to show that given any sequence of points $x_i$ in $\mathcal{KE}_k$ which converges to a point $x_\infty$ in $\mathfrak{M}_k$, we have in fact that $x_\infty$ is in $\mathcal{KE}_k$.
Each $x_i$ corresponds to a Fano surface $M_i$ in $\mathfrak{M}_k$ which admits a K\"ahler-Einstein metric $\omega_i$, while the point $x_\infty$ corresponds to a Fano surface $M_\infty$ in $\mathfrak{M}_k$. The fact that $x_i\to x_\infty$ means that the $k$-tuples of points corresponding to $M_i$ converge to the $k$-tuple of points corresponding to $M_\infty$. This implies that the complex manifolds $M_i$ converge to $M_\infty$ in the sense of Cheeger-Gromov, i.e. modulo modifying the complex structure $J_i$ of $M_i$ by diffeomorphisms (which we can and will assume are just the identity), we can assume that $J_i\to J_\infty$ smoothly (on the underlying differentiable manifold). Moreover one can find reference K\"ahler metrics $\ti{\omega}_i$ on $M_i$ cohomologous to $c_1(M_i)$ which converge smoothly to a K\"ahler metric $\ti{\omega}_\infty$ on $M_\infty$ cohomologous to $c_1(M_\infty)$. 

To see why these facts hold, one can for example easily construct a holomorphic map $X\to (\mathbb{CP}^2)^k\backslash \Sigma$ where $X$ is a quasiprojective variety so that the fiber over a $k$-tuple of points in $(\mathbb{CP}^2)^k$ in general position is the blow-up of $\mathbb{CP}^2$ at these points (and $\Sigma$ corresponds to configurations of points not in general position).
One then embeds $X$ in a large projective space, using the relative anticanonical bundle. The induced complex structures on the fibers will be $J_i$ and $J_\infty$ and restricting (a multiple of) the Fubini-Study metric gives the reference metrics $\ti{\omega}_i$ and $\ti{\omega}_\infty$.

If we could show that the K\"ahler-Einstein metrics $\omega_i$ (possibly modulo subsequences) converge smoothly to a limiting K\"ahler metric $\omega_\infty$ on $M_\infty$, then this would be automatically K\"ahler-Einstein and this would prove that indeed $x_\infty$ is in $\mathcal{KE}_k$.

Since the K\"ahler metrics $\omega_i$ and $\ti{\omega}_i$ are cohomologous, there are smooth functions $\vp_i$ such that $\omega_i=\ti{\omega}_i+\mn\de\db\vp_i$. The functions $\vp_i$ are only unique up to addition of a constant, and they can be normalized as follows: first we denote by $f_i$ the normalized Ricci potential of $\ti{\omega}_i$, which is defined by
$$\Ric(\ti{\omega}_i)=\ti{\omega}_i+\mn\de\db f_i,\quad \int_{M_i}e^{f_i}\ti{\omega}_i^2=\int_{M_i}\ti{\omega}_i^2=V,$$
where $V$ denotes the volume of $(M_i,\omega_i)$, which is the topological number
$$V=\int_{M_i}\ti{\omega}_i^2=c_1(M_i)^2=9-k.$$
Note that the functions $f_i$ converge smoothly to the Ricci potential $f_\infty$ of $\ti{\omega}_\infty$. Then the potentials $\vp_i$ can be normalized by imposing that they solve the complex Monge-Amp\`ere equation
\begin{equation}\label{ma}
(\ti{\omega}_i+\mn\de\db\vp_i)^2=e^{f_i-\vp_i}\ti{\omega}_i^2,
\end{equation}
which expresses the fact that the metrics $\omega_i$ are K\"ahler-Einstein.
We have the following theorem:
\begin{theorem} If there is a constant $C$ independent of $i$ such that
\begin{equation}\label{supest}
\sup_i\sup_{M_i}\vp_i\leq C,
\end{equation}
then $\mathcal{KE}_k$ is closed in $\mathfrak{M}_k$.
\end{theorem}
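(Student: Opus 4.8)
The plan is to bootstrap the hypothesis \eqref{supest} to a uniform $C^0$ bound on $\varphi_i$, then apply the usual Yau--Aubin a priori estimates to get uniform $C^k$ bounds for every $k$, and finally extract a $C^\infty$-convergent subsequence whose limit is a K\"ahler--Einstein metric on $M_\infty$. All constants will be uniform in $i$ because the reference data already is: as recalled above, $J_i\to J_\infty$ and $\ti{\omega}_i\to\ti{\omega}_\infty$ in $C^\infty$, so for $i$ large the $\ti{\omega}_i$ have uniformly bounded curvature and uniform Sobolev, Poincar\'e and Green's function bounds, and the $f_i$ are uniformly bounded in every $C^k$ norm.

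The crucial, and I expect hardest, step is the $C^0$ estimate $\|\varphi_i\|_{C^0}\le C$; since \eqref{supest} is the upper bound, the problem is a uniform lower bound $\inf_{M_i}\varphi_i\ge -C$. Integrating \eqref{ma} and using $\int_{M_i}e^{f_i}\ti{\omega}_i^2=\int_{M_i}\ti{\omega}_i^2=\int_{M_i}\omega_i^2=V$ gives $\int_{M_i}e^{f_i}(e^{-\varphi_i}-1)\,\ti{\omega}_i^2=0$, so $\sup_{M_i}\varphi_i\ge 0$; and the trace inequality $2+\triangle_{\ti{\omega}_i}\varphi_i=\tr{\ti{\omega}_i}{\omega_i}>0$ together with the lower bound on the Green's function of $\ti{\omega}_i$ gives $\sup_{M_i}\varphi_i-\tfrac1V\int_{M_i}\varphi_i\,\ti{\omega}_i^2\le C$, so by \eqref{supest} the average $\tfrac1V\int_{M_i}\varphi_i\,\ti{\omega}_i^2$ is bounded. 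Next I would use that $\omega_i$ minimizes the Ding functional of $\ti{\omega}_i$: comparing with the potential $0$ (and using the normalization \eqref{ma}, which makes the logarithmic term of the Ding functional vanish at $\varphi_i$) yields $\sum_{j=0}^{2}\int_{M_i}\varphi_i\,\ti{\omega}_i^{j}\wedge\omega_i^{2-j}\ge0$; since each of the three integrals is bounded above — by the $\ti{\omega}_i$-estimates above together with integrations by parts such as $\int_{M_i}\varphi_i\,\ti{\omega}_i\wedge\omega_i=\int_{M_i}\varphi_i\,\ti{\omega}_i^2-\int_{M_i}\sqrt{-1}\de\varphi_i\wedge\db\varphi_i\wedge\ti{\omega}_i\le\int_{M_i}\varphi_i\,\ti{\omega}_i^2$ — it follows that $\tfrac1V\int_{M_i}\varphi_i\,\omega_i^2\ge -C$. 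Finally, since $\omega_i$ is K\"ahler--Einstein, $\Ric(\omega_i)=\omega_i>0$, so Bonnet--Myers bounds $\diam(M_i,\omega_i)$ and Bishop--Gromov (with the fixed volume $V$) gives a uniform Sobolev inequality for $\omega_i$; Moser iteration for the non-negative function $\sup_{M_i}\varphi_i-\varphi_i$, which satisfies $\triangle_{\omega_i}(\sup_{M_i}\varphi_i-\varphi_i)=\tr{\omega_i}{\ti{\omega}_i}-2\ge -2$, then yields $\osc_{M_i}\varphi_i\le C\big(\sup_{M_i}\varphi_i-\tfrac1V\int_{M_i}\varphi_i\,\omega_i^2\big)+C\le C$, and combined with $0\le\sup_{M_i}\varphi_i\le C$ this is the desired two-sided bound.

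With $\|\varphi_i\|_{C^0}\le C$ the remaining estimates are routine and uniform in $i$. Yau's second-order estimate — computing $\triangle_{\omega_i}$ of $\log\tr{\ti{\omega}_i}{\omega_i}-A\varphi_i$ for $A$ large, using the bisectional curvature bound for $\ti{\omega}_i$, the $C^0$ bound, and $\|f_i\|_{C^2}$ — gives $\tr{\ti{\omega}_i}{\omega_i}\le C$, and then \eqref{ma} forces $C^{-1}\ti{\omega}_i\le\omega_i\le C\ti{\omega}_i$; the complex Evans--Krylov theorem (or Calabi's third-order estimate) gives $\|\varphi_i\|_{C^{2,\alpha}}\le C$, and differentiating \eqref{ma} and bootstrapping with Schauder estimates gives $\|\varphi_i\|_{C^k}\le C_k$ for every $k$. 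By Arzel\`a--Ascoli and a diagonal argument we may pass to a subsequence with $\varphi_i\to\varphi_\infty$ in $C^\infty$; then $\omega_\infty:=\ti{\omega}_\infty+\ddbar\varphi_\infty$ (with respect to $J_\infty$) is smooth and satisfies $\omega_\infty\ge C^{-1}\ti{\omega}_\infty>0$, hence is a K\"ahler metric on $M_\infty$ in $c_1(M_\infty)$, and passing to the limit in \eqref{ma} gives $\omega_\infty^2=e^{f_\infty-\varphi_\infty}\ti{\omega}_\infty^2$, i.e.\ $\Ric(\omega_\infty)=\omega_\infty$. Thus $M_\infty$ admits a K\"ahler--Einstein metric, so $x_\infty\in\mathcal{KE}_k$; since $x_\infty$ was an arbitrary limit of a sequence in $\mathcal{KE}_k$, the set $\mathcal{KE}_k$ is closed in $\mathfrak{M}_k$, as claimed.
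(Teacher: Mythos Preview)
Your proposal is correct and follows essentially the same route as the paper. Both arguments reduce the closedness of $\mathcal{KE}_k$ to a uniform $C^0$ bound on $\vp_i$, obtain that bound via a Harnack-type inequality coming from the Bando--Mabuchi backwards continuity method together with the uniform geometry of the K\"ahler--Einstein metrics $\omega_i$, and then conclude with Yau's higher-order estimates and Arzel\`a--Ascoli. Your Ding-functional inequality $\sum_{j=0}^{2}\int_{M_i}\vp_i\,\ti{\omega}_i^{j}\wedge\omega_i^{2-j}\geq 0$ is exactly the inequality $\frac{1}{V}\int_{M_i}(-\vp_i)\omega_i^2\leq (I_{\ti{\omega}_i}-J_{\ti{\omega}_i})(\vp_i)$ the paper records (unwind the definitions of $I$, $J$ and the Aubin--Yau energy), and your Moser iteration with respect to $\omega_i$ plays the same role as the paper's Green-function estimate \eqref{har1}; the only point worth making explicit is that the Ding-minimization step, like the paper's Harnack inequality \eqref{harn1}, relies on Bando--Mabuchi and hence on the fact that the $M_i\in\mathfrak{M}_k$ (for $5\le k\le 8$) carry no nonzero holomorphic vector fields.
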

The reason for this is the following. First for each $i$ there is a Harnack inequality of the form
\begin{equation}\label{harn1}-\inf_{M_i}\vp_i\leq 2\sup_{M_i}\vp_i+C,
\end{equation}
where here and from now on $C$ denotes a constant independent of $i$, which might change from line to line. This is proved by Tian \cite{Ti1}, using the fact that each $M$ in $\mathfrak{M}_k$ with $5\leq k\leq 8$ has no nonzero holomorphic vector fields (see also Siu \cite{Si} for a slightly weaker statement which would also suffice). In fact, to prove \eqref{harn1}, Tian \cite[Proposition 2.3 (i)]{Ti1} first proves that
\begin{equation}\label{improved1}
\frac{1}{V}\int_{M_i}(-\vp_i)\omega_i^2\leq 2\sup_{M_i}\vp_i,
\end{equation}
(we will give a proof of this below in \eqref{harn2})
and then uses the Green formula for the K\"ahler-Einstein metrics $\omega_i$ (their Green functions have a uniform positive lower bound independent of $i$ because the metrics $\omega_i$ have bounded Sobolev and Poincar\'e constants, see Lemma \ref{moserr}) to get
\begin{equation}\label{har1}
-\inf_{M_i}\vp_i\leq \frac{1}{V}\int_{M_i}(-\vp_i)\omega_i^2+C,
\end{equation}
and combining \eqref{improved1} and \eqref{har1} gives \eqref{harn1}.
Granted this, \eqref{supest} would give a uniform bound on the oscillation of $\vp_i$,
$$\sup_i\osc_{M_i}\vp_i=\sup_i(\sup_{M_i}\vp_i-\inf_{M_i}\vp_i)\leq C.$$
Moreover \eqref{ma} immediately gives that
$$\sup_{M_i}\vp_i\geq 0,\quad \inf_{M_i} \vp_i\leq 0,$$
and so we get a bound on the $L^\infty$ norm of $\vp_i$
$$\sup_i\sup_{M_i} |\vp_i|\leq C.$$
Then the $C^2$ and higher order estimates of Yau \cite{Y, Si} for the complex Monge-Amp\`ere equation \eqref{ma} give uniform bounds
$$\sup_i\|\vp_i\|_{C^\ell(M_i,\ti{\omega}_i)}\leq C,$$
for all $\ell\geq 0,$ 
and so by the Ascoli-Arzel\`a theorem a subsequence of the metrics $\omega_i$ converges smoothly to a K\"ahler-Einstein metric $\omega_\infty$ on $M_\infty$.

The main theorem \ref{main4} is thus reduced to establishing the estimate \eqref{supest}. Before we move on to the proof of \eqref{supest}, let us introduce Tian's $\alpha$-invariants, which are central to the proof.
\setcounter{equation}{0}
\section{$\alpha$-invariants}
We start with the following result
\begin{proposition}[H\"ormander \cite{Ho}, Tian \cite{Ti1}]
If $(M,\omega)$ is a compact K\"ahler manifold, then there exist constants $C,\alpha>0$ so that for any K\"ahler potential $\vp$ (i.e. any smooth real-valued function $\vp$ with $\omega+\mn\de\db\vp>0$) we have
\begin{equation}\label{alph1}\int_M e^{-\alpha(\vp-\sup_M\vp)}\omega^n\leq C.
\end{equation}
\end{proposition}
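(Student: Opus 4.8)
The plan is to reduce the global estimate to a uniform exponential integrability statement for plurisubharmonic functions on balls in $\mathbb{C}^n$, and to prove the latter by a compactness argument. Replacing $\vp$ by $\vp-\sup_M\vp$ we may assume $\sup_M\vp=0$, so $\vp\leq 0$ and $\vp(x_0)=0$ for some $x_0\in M$, and we must produce $\alpha,C>0$ independent of $\vp$ with $\int_M e^{-\alpha\vp}\,\omega^n\leq C$. Cover $M$ by finitely many coordinate charts $U_j\cong B_2\subset\mathbb{C}^n$ on which $\omega=\mn\de\db\rho_j$ for a fixed smooth (hence bounded) local potential $\rho_j$, chosen so that the unit balls $B_1^{(j)}$ still cover $M$. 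On $U_j$ the function $u_j:=\rho_j+\vp$ is plurisubharmonic, it satisfies $\sup_{U_j}u_j\leq\|\rho_j\|_{C^0}$, and $\mn\de\db u_j=\omega+\mn\de\db\vp\geq 0$ is a \emph{closed} positive $(1,1)$-current whose mass over $U_j$ is bounded cohomologically by $\int_M(\omega+\mn\de\db\vp)\wedge\omega^{n-1}=[\omega]^n$. Since $\omega^n\leq C\,d\lambda$ on each chart and $e^{-\alpha\vp}=e^{\alpha\rho_j}e^{-\alpha u_j}$, the proposition reduces to a bound $\int_{B_1^{(j)}}e^{-\alpha u_j}\,d\lambda\leq C$, uniform over this family of normalized plurisubharmonic functions.

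The first step is a uniform $L^1$ bound: there is $A$, independent of $\vp$, with $\int_M(-\vp)\,\omega^n\leq A$. If not, take $\vp_k$ with $\sup_M\vp_k=0$ but $\int_M(-\vp_k)\,\omega^n\to\infty$. On $U_1$ the functions $u_1^{(k)}=\rho_1+\vp_k$ are plurisubharmonic and uniformly bounded above, so by the classical compactness theorem for plurisubharmonic functions a subsequence either converges in $L^1_{\mathrm{loc}}(U_1)$ or tends to $-\infty$ locally uniformly. In the first case, since on overlaps the $u_j^{(k)}$ differ from $u_1^{(k)}$ by the fixed smooth functions $\rho_j-\rho_1$ and $M$ is connected, $L^1_{\mathrm{loc}}$-convergence propagates to every chart, so $\int_M(-\vp_k)\,\omega^n$ stays bounded along the subsequence, a contradiction. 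In the second case the convergence to $-\infty$ likewise propagates to every chart; but passing to a further subsequence so that the maximum points $x_0^{(k)}$ of $\vp_k$ converge to some $x_\infty\in M$, on a chart $U_{j_*}\ni x_\infty$ one has $u_{j_*}^{(k)}(x_0^{(k)})=\rho_{j_*}(x_0^{(k)})$, which stays bounded below on a neighbourhood of $x_\infty$ containing all $x_0^{(k)}$ for $k$ large, again a contradiction. Hence the $L^1$ bound holds, and in particular $\int_{B_1^{(j)}}|u_j|\,d\lambda\leq A'$ uniformly in $\vp$ and $j$.

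It remains to upgrade this uniform $L^1$ bound to the uniform exponential bound $\int_{B_1^{(j)}}e^{-\alpha u_j}\,d\lambda\leq C$; this is the heart of the matter and I expect it to be the main obstacle, since $L^1$ control gives no pointwise control whatsoever --- a plurisubharmonic function may be close to $0$ at a point and $-\infty$ nearby. What rescues the argument is that within a family of plurisubharmonic functions uniformly bounded above and bounded in $L^1$, the logarithmic singularities cannot be arbitrarily deep: the monotonicity of $r\mapsto r^{2-2n}\,(\mn\de\db u_j)(B_r(x))$, which crucially uses that $\mn\de\db u_j$ is \emph{closed}, together with the cohomological mass bound forces a uniform upper bound on the Lelong numbers of all the $u_j$. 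Granted this, the uniform exponential integrability follows from the quantitative version of the integrability theorem of H\"ormander and Skoda for plurisubharmonic functions with bounded Lelong number. Alternatively one can bypass Lelong numbers and argue directly that a family of plurisubharmonic functions which is uniformly bounded above and bounded in $L^1$ is automatically uniformly exponentially integrable, via a Calder\'on--Zygmund / stopping-time argument showing that the distribution functions $\lambda\big(\{u_j<-t\}\cap B_{1/2}^{(j)}\big)$ decay exponentially in $t$, uniformly in $j$ and $\vp$. Either route produces the desired $\alpha$ and $C$ and completes the proof.
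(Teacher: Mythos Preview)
The paper does not prove this proposition: it is simply stated with attribution to H\"ormander \cite{Ho} and Tian \cite{Ti1} and then used immediately to define the $\alpha$-invariant. So there is no argument in the paper to compare yours against.

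Your outline is the standard one and is essentially correct. The reduction to local charts and the uniform $L^1$ bound via the dichotomy for families of plurisubharmonic functions bounded above (either $L^1_{\mathrm{loc}}$-convergence or uniform convergence to $-\infty$, with propagation through overlapping charts by connectedness) are both fine.

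The only point that deserves more care is the final step. In your route (a), the cohomological mass bound does give a uniform upper bound on the Lelong numbers of the $u_j$, but the passage from ``uniform Lelong-number bound plus uniform $L^1$ bound'' to ``uniform bound on $\int e^{-\alpha u_j}$'' is not the classical Skoda statement, which concerns a single function and says nothing about uniformity of the constant; you should either cite a precise uniform version or indicate how to obtain it. The most direct route, closest to H\"ormander's own argument, is this: from the uniform $L^1$ bound and the sub-mean-value inequality, in every sufficiently small ball $B$ there is a point $p_B$ (depending on $\vp$) at which $u_j\geq -C$; then H\"ormander's Theorem~4.4.5 in \cite{Ho} (input: $u\leq 0$ on a polydisc, $u$ bounded below at the center; output: a bound on $\int e^{-\alpha u}$ over a concentric sub-polydisc with explicit $\alpha$) applied on a polydisc centered at $p_B$ and of fixed radius gives a uniform bound on $\int_{B} e^{-\alpha u_j}$, provided the covering is chosen with suitable radii. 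Summing over a finite cover of $M$ yields the proposition. Alternatively, one can finish abstractly by the compactness of $\{\vp\ \omega\textrm{-PSH}:\sup_M\vp=0\}$ in $L^1$, Skoda's theorem at each limit point, and the semicontinuity of complex singularity exponents as in \cite{DK} to get uniformity on $L^1$-neighborhoods, finitely many of which cover.
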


Using this, Tian defined the $\alpha$-invariant as follows
\[\begin{split}\alpha(M,\omega)=\sup\{\alpha>0\ |\ &\exists C>0 \textrm{ such that }\eqref{alph1} \textrm{ holds}\\
&\textrm{ for all K\"ahler potentials }\vp\}.\end{split}\]
It is immediate to see that if $\omega'$ is another K\"ahler form cohomologous to $\omega$ then 
$\alpha(M,\omega)=\alpha(M,\omega')$. When the cohomology class is $c_1(M)$, we will simply denote this by $\alpha(M)$. It is also immediate to see that $\alpha(M)$ is invariant under biholomorphisms.

If $G$ is a compact subgroup of the automorphism group of $M$, we can consider $\alpha_G(M)$ where one restricts to $G$-invariant $\omega$ and $\vp$. In particular $\alpha_G(M)\geq\alpha(M)$. We have the following useful result
\begin{theorem}[Tian \cite{Ti1}]\label{alpha2} If on an $n$-dimensional Fano manifold $M$ we have
$$\alpha(M)>\frac{n}{n+1},$$
then $M$ admits a K\"ahler-Einstein metric. The same result holds for $\alpha_G(M)$.
\end{theorem}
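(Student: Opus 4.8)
\noindent\emph{Sketch of the argument.} The approach I would take is Aubin's continuity method. Fix a K\"ahler form $\ti\omega\in c_1(M)$ with normalized Ricci potential $f$, so that $\Ric(\ti\omega)=\ti\omega+\mn\de\db f$ and $\int_M e^{f}\ti\omega^n=\int_M\ti\omega^n=:V$, and for $t\in[0,1]$ consider the family of complex Monge--Amp\`ere equations
\begin{equation}\label{apath}
(\ti\omega+\mn\de\db\vp_t)^n=e^{f-t\vp_t}\,\ti\omega^n,\qquad \omega_t:=\ti\omega+\mn\de\db\vp_t>0,
\end{equation}
which for a solution is equivalent to $\Ric(\omega_t)=t\omega_t+(1-t)\ti\omega$; a solution at $t=1$ is thus K\"ahler--Einstein. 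Writing $S\subseteq[0,1]$ for the set of $t$ for which \eqref{apath} is solvable, one has $0\in S$ because \eqref{apath} at $t=0$ is the Calabi--Yau equation solved by Yau \cite{Y}. For openness I would use the implicit function theorem: the linearization of \eqref{apath} at a solution is $\psi\mapsto\Delta_{\omega_t}\psi+t\psi$, and for $t<1$ it has trivial kernel, since the first nonzero eigenvalue of $-\Delta_{\omega_t}$ is $>t$; this follows from the K\"ahler Bochner identity and the strict pointwise bound $\Ric(\omega_t)=t\omega_t+(1-t)\ti\omega>t\omega_t$, valid for $t<1$. Hence $S$ is open in $[0,1)$.

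The real work is closedness, which reduces to a uniform a priori estimate $\sup_{t\in S}\|\vp_t\|_{C^0}\leq C$. Indeed, granting this, Yau's second order estimate \cite{Y} for \eqref{apath} bounds $\Delta_{\ti\omega}\vp_t$, making \eqref{apath} uniformly elliptic and concave; the complex Evans--Krylov theorem and Schauder bootstrapping then give uniform $C^k$ bounds for every $k$, and Ascoli--Arzel\`a produces, along any $t_i\to t_\infty$, a subsequential limit solving \eqref{apath} at $t_\infty$. Thus $S$ is closed, and being nonempty, open in $[0,1)$ and closed in $[0,1]$ it equals $[0,1]$; the solution at $t=1$ is the desired K\"ahler--Einstein metric.

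So everything comes down to the uniform $C^0$ estimate, and this is where $\al(M)>\tfrac{n}{n+1}$ enters; it is the crux. I would organize it around the Aubin--Ding functional
\[ F^0_{\ti\omega}(\vp)=J_{\ti\omega}(\vp)-\frac1V\int_M\vp\,\ti\omega^n-\log\!\Big(\frac1V\int_M e^{f-\vp}\,\ti\omega^n\Big), \]
whose critical points are the K\"ahler--Einstein potentials and which is non-increasing along \eqref{apath}, so that $F^0_{\ti\omega}(\vp_t)\leq F^0_{\ti\omega}(\vp_0)=:C_0$ for all $t\in S$. The key analytic input is \emph{properness}: I claim there is $\ep>0$ with $F^0_{\ti\omega}(\vp)\geq\ep\,J_{\ti\omega}(\vp)-C$ for every K\"ahler potential $\vp$. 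To see this one writes $\log\big(\tfrac1V\int_M e^{f-\vp}\ti\omega^n\big)=-\inf_M\vp+\log\big(\tfrac1V\int_M e^{f}e^{-(\vp-\inf_M\vp)}\ti\omega^n\big)$; since $\vp-\inf_M\vp\geq0$, lowering the exponent $1$ to some $\al'\in(\tfrac{n}{n+1},\al(M))$ only increases the integrand, so the $\al$-invariant estimate \eqref{alph1} applied to the K\"ahler potential $\vp-\inf_M\vp$ of $\ti\omega$ bounds this logarithm by $-\inf_M\vp-\al'\,\osc_M\vp+C$. Inserting this into the definition of $F^0_{\ti\omega}$ and using the standard comparison inequalities between Aubin's functionals $I_{\ti\omega}$, $J_{\ti\omega}$ and the oscillation of $\vp$ (whose optimal constant is $n+1$) gives $F^0_{\ti\omega}(\vp)\geq\big(1-(n+1)(1-\al')\big)J_{\ti\omega}(\vp)-C$, and the coefficient $1-(n+1)(1-\al')$ is strictly positive precisely because $\al(M)>\tfrac{n}{n+1}$ --- this is where that threshold is forced. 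Properness together with $F^0_{\ti\omega}(\vp_t)\leq C_0$ then bounds $J_{\ti\omega}(\vp_t)$, hence $\osc_M\vp_t$, and since \eqref{apath} forces $\sup_M\vp_t\geq0\geq\inf_M\vp_t$ this yields the required uniform $C^0$ bound. I expect the properness estimate --- nailing down the auxiliary functional inequalities and the monotonicity of $F^0_{\ti\omega}$ along \eqref{apath}, so that the $\al$-invariant can genuinely be cashed in for a uniform $C^0$ bound as $t\to1$ --- to be the main obstacle.

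Finally, the statement for $\al_G(M)$ is proved by running this whole scheme $G$-equivariantly: taking $\ti\omega$, and hence $f$, to be $G$-invariant, the solution $\vp_t$ of \eqref{apath} is $G$-invariant for each $t<1$ by uniqueness, so all the estimates may be carried out among $G$-invariant potentials with $\al_G(M)>\tfrac{n}{n+1}$ in place of $\al(M)$, and the limit at $t=1$ is a $G$-invariant K\"ahler--Einstein metric.
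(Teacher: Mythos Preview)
The paper does not actually prove Theorem~\ref{alpha2}; it is quoted from Tian \cite{Ti1} and used as a black box. There is therefore no ``paper's own proof'' to compare against in the strict sense. That said, the paper does carry out closely related estimates in the $n=2$ setting (see \eqref{improved1}, \eqref{har1}, \eqref{crucial} and the surrounding discussion), and those follow Tian's original \cite{Ti1} argument rather than your Ding-functional packaging: one combines the $\alpha$-invariant with the Monge--Amp\`ere equation and Jensen to get $\sup_M\vp_t\leq\frac{t-\alpha}{\alpha V}\int_M(-\vp_t)\,\omega_t^n+C$, and then closes up using a Harnack inequality $\frac1V\int_M(-\vp_t)\,\omega_t^n\leq n\sup_M\vp_t+C$ (this is \eqref{improved1} when $n=2$), the point being that $\frac{n(1-\alpha)}{\alpha}<1$ exactly when $\alpha>\frac{n}{n+1}$.

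Your overall architecture (continuity method, openness by the eigenvalue bound, reduction to a uniform $C^0$ estimate) is correct, but the $F^0$-based derivation of that estimate has two genuine gaps. First, the assertion that $t\mapsto F^0_{\ti\omega}(\vp_t)$ is non-increasing along \eqref{apath} is not correct as stated: computing the first variation gives $\frac{d}{dt}F^0_{\ti\omega}(\vp_t)=-\frac1V\int_M\dot\vp_t\,\omega_t^n+\frac{\int_M\dot\vp_t\,e^{(t-1)\vp_t}\omega_t^n}{\int_M e^{(t-1)\vp_t}\omega_t^n}$, which has no obvious sign (you may be thinking of the K\"ahler--Ricci flow, where $F^0$ \emph{is} monotone, or of the monotonicity of $I-J$ along the path). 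Second, and more seriously, the ``standard comparison'' you invoke, namely $\osc_M\vp\leq(n+1)J_{\ti\omega}(\vp)+C$ for \emph{all} K\"ahler potentials, is false: bounding $\osc_M\vp$ by $I_{\ti\omega}(\vp)$ would require $\frac1V\int_M\vp\,\omega_\vp^n\leq\inf_M\vp+C$, which fails without control on the Green function of $\omega_\vp$. This inequality \emph{does} hold along the continuity path, because $\Ric(\omega_t)\geq t\,\omega_t$ gives uniform Sobolev/Green bounds for $\omega_t$ (exactly as in the proof of \eqref{har1}); but then you are no longer proving properness of $F^0$ on the whole space of potentials --- you are doing the direct estimate, and the detour through $F^0$ and its claimed monotonicity becomes unnecessary. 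The cleanest fix is simply to argue as in \cite{Ti1} (and as the paper does for $n=2$): derive the analogue of \eqref{crucial} directly from the $\alpha$-invariant, pair it with the Harnack inequality coming from the Ricci lower bound along the path, and read off $\sup_M\vp_t\leq C$ from $\alpha>\frac{n}{n+1}$.
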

So on a Fano surface $M$ we get a K\"ahler-Einstein metric provided $\alpha(M)>2/3$.
We have the following result, which is a combination of computations of Tian-Yau \cite{TY}, Tian \cite{Ti2}, Song \cite{So}, Cheltsov \cite{Ch}, Heier \cite{He} and Shi \cite{Sh} (see also \cite[Remark 1.1]{He}, \cite{CW} and \cite{CG}).
\begin{theorem}\label{estima}If $M$ is a Fano surface have the following estimates:
\begin{itemize}\item If $M$ is in $\mathfrak{M}_8$, then $\alpha(M)\geq 5/6,$
\item If $M$ is in $\mathfrak{M}_7$, then $\alpha(M)\geq 3/4,$
\item If $M$ is in $\mathfrak{M}_5$ there exists $G\subset \mathrm{Aut}(M)$ compact with $\alpha_G(M)\geq 1,$
\item If $M$ is in $\mathfrak{M}_4$, there exists $G\subset \mathrm{Aut}(M)$ compact with $\alpha_G(M)\geq 1,$
\item If $M$ is in $\mathfrak{M}_3$, there exists $G\subset \mathrm{Aut}(M)$ compact with $\alpha_G(M)\geq 1,$
\item If $M$ is in $\mathfrak{M}_6$, then $\alpha(M)\geq 2/3.$ There exist manifolds $M$ in  $\mathfrak{M}_6$ with $\alpha(M)\geq 3/4$, and there exist also manifolds with $\alpha_G(M)=2/3$ for any  $G\subset \mathrm{Aut}(M)$ compact.
\end{itemize}
\end{theorem}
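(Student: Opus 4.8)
The plan is to translate the analytic definition of the $\alpha$-invariant into a statement about log canonical thresholds of anticanonical divisors, and then to carry out the estimate one del Pezzo surface at a time, using the classification of Theorem~\ref{dp} and the combinatorics of $(-1)$-curves. The first ingredient is the dictionary between $\alpha$-invariants and log canonical thresholds: a K\"ahler potential $\vp$ with $\omega+\mn\de\db\vp\geq 0$ and $\omega\in c_1(M)$ defines a singular Hermitian metric on $K_M^{-1}$ with semipositive curvature current, and the finiteness of $\int_M e^{-\alpha(\vp-\sup_M\vp)}\omega^n$ is governed by the multiplier ideal of $\alpha\vp$. Using Demailly's regularization theorem one approximates $\vp$ by potentials with analytic, i.e.\ divisorial, singularities without decreasing the integrability exponent; the Demailly--Koll\'ar theorem then identifies the complex singularity exponent with a log canonical threshold; and globalizing (this is implicit in Tian's argument \cite{Ti1} and was later made precise by Demailly) one obtains
\[\alpha(M)=\inf\bigl\{\,\mathrm{lct}(M,D)\ :\ D\geq 0\ \text{an effective}\ \mathbb{Q}\text{-divisor},\ D\sim_{\mathbb{Q}}-K_M\,\bigr\},\]
where $\mathrm{lct}(M,D)=\sup\{\lambda>0:(M,\lambda D)\ \text{is log canonical}\}$, while $\alpha_G(M)$ is the same infimum restricted to $G$-invariant $D$. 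The theorem is thereby reduced to a purely algebro-geometric problem: bound these infima from below on each surface in Theorem~\ref{dp}.

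For that I would argue according to the degree $d=(-K_M)^2=9-k$. Writing an effective $D\sim_{\mathbb{Q}}-K_M$ as $D=\sum a_iC_i$, ampleness of $-K_M$ gives $a_i(-K_M\cdot C_i)\leq d$, so the coefficient of any line is at most $d$; moreover $(M,\lambda D)$ can fail to be log canonical only at finitely many points, where one has the elementary bound $\mathrm{lct}_p(M,D)\leq 2/\mathrm{mult}_pD$ from a single blowup, complemented by matching lower bounds near points of small multiplicity. For $k=7,8$ (degrees $2,1$) the smallness of $d$ forces $D$ to be essentially an irreducible anticanonical curve --- of arithmetic genus $1$, hence at worst nodal or cuspidal, a cusp contributing $\mathrm{lct}=5/6$ --- or, when $d=2$, a reducible divisor $L+L'$ with $L\cdot L'=2$ arising from a bitangent of the branch quartic, whose worst singularity (a tacnode, occurring at a hyperflex) has $\mathrm{lct}=3/4$; this short list of singular configurations, together with the analogous analysis of members of $|-mK_M|$ divided by $m$, yields $\alpha(M)\geq 3/4$ for $k=7$ and $\alpha(M)\geq 5/6$ for $k=8$. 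For $k=3,4,5$ (degrees $6,5,4$) there are too many $(-1)$-curves for the plain $\alpha$-invariant to clear $2/3$, so one passes to a suitable compact $G$ --- a maximal compact subgroup of $\mathrm{Aut}$ on the toric sextic del Pezzo, the full group $S_5$ on the quintic del Pezzo, a group of involutions on each quartic del Pezzo --- and checks directly that every $G$-invariant effective anticanonical $\mathbb{Q}$-divisor has $\mathrm{lct}\geq 1$, the symmetry ruling out the concentrated singular configurations that could otherwise lower the threshold. Since these are exactly the surfaces on which the seed metrics are produced, this simultaneously reproves Theorem~\ref{tianyau} via Theorem~\ref{alpha2}.

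The main obstacle is the cubic surface case $k=6$, where the bound $\alpha(M)\geq 2/3$ is sharp. A hyperplane section of the cubic through an Eckardt point is a triangle $L_1+L_2+L_3$ of three concurrent lines, an effective anticanonical divisor, and a single blowup of the triple point (discrepancy $1$, order of vanishing $3$) gives $\mathrm{lct}(M,L_1+L_2+L_3)=2/3$ exactly. The real work is the \emph{inequality} $\mathrm{lct}(M,D)\geq 2/3$ for \emph{every} effective $D\sim_{\mathbb{Q}}-K_M$: one must show $\mathrm{mult}_pD\leq 3$ at every point and control the infinitely-near behaviour, which forces a careful bound on the coefficients with which lines and conics can appear in $D$ (using that $-K_M-L$, $-K_M-2L$, $\dots$ must remain effective, and that two distinct lines on a cubic surface meet in at most one point) together with an inspection of all singular members of $|-K_M|$ and of the pencils of conics. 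One must also exhibit a surface in $\mathfrak{M}_6$ for which $\alpha_G(M)=2/3$ for \emph{every} compact $G\subseteq\mathrm{Aut}(M)$ --- most easily a cubic surface that has an Eckardt point but no nontrivial automorphism, so that $G$ is forced to be trivial --- while checking that a cubic surface with no Eckardt point instead satisfies $\alpha(M)\geq 3/4$. The value $2/3$ is precisely the one that Theorem~\ref{alpha2} cannot exploit, which is why $\mathfrak{M}_6$ must ultimately be treated through the uniform estimate \eqref{supest} along a continuity path rather than by the $\alpha$-invariant alone.
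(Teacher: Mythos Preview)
The paper does not actually prove Theorem~\ref{estima}; it records the result as a compilation of computations from \cite{TY,Ti2,So,Ch,He,Sh} and, in the Appendix, sketches the bridge to algebraic geometry via Demailly's identification of $\alpha(L)$ with an infimum of log canonical thresholds. Your proposal is precisely this route and is in line with how the cited references (especially Cheltsov \cite{Ch} and Shi \cite{Sh}) carry out the estimates, so in that sense your approach coincides with what the paper points to rather than with any argument it gives.

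One concrete point needs correction. In the last paragraph you propose to realize the clause ``$\alpha_G(M)=2/3$ for any compact $G\subset\mathrm{Aut}(M)$'' by exhibiting a smooth cubic surface with an Eckardt point and \emph{trivial} automorphism group. No such surface exists: an Eckardt point $p$ on a smooth cubic $S\subset\mathbb{P}^3$ always produces a nontrivial involution of $S$ (the linear involution of $\mathbb{P}^3$ fixing $p$ and the tangent hyperplane $T_pS$ pointwise restricts to $S$), so $\mathrm{Aut}(S)$ is never trivial in that case. The correct mechanism is the one used in \cite{Sh}: take a smooth cubic with exactly one Eckardt point, for which $\mathrm{Aut}(S)\cong\mathbb{Z}/2\mathbb{Z}$ is generated by the Eckardt involution; the Eckardt triangle $L_1+L_2+L_3$ is then invariant under the full automorphism group, hence under every compact $G$, and your own computation $\mathrm{lct}(S,L_1+L_2+L_3)=2/3$ forces $\alpha_G(S)\leq 2/3$ for all such $G$. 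With this fix your outline matches the literature the paper is citing.
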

This theorem, parts of which came after Tian's work \cite{Ti2}, shows that except for $\mathfrak{M}_6$, we can directly apply Theorem \ref{alpha2} to prove the main theorem \ref{main3}. However, to cover the case of $\mathfrak{M}_6$, we will need all of the analysis that follows (which covers all the cases of $\mathfrak{M}_k$ with $5\leq k\leq 8$).

We now define some kind of ``algebraic $\alpha$-invariants'' using plurianticanonical sections. For any Fano manifold $M$ and any $m\geq 1$ we let $H^0(K_{M}^{-m})$ be the space of global $m$-anticanonical sections of $M$, which is a finite dimensional vector space of dimension $N_m$ (which for $m$ large can be computed using the Riemann-Roch formula). We also fix a Hermitian metric $h$ on the fibers of the line bundle $K_M^{-1}$ with positive curvature $\omega$, a K\"ahler form in $c_1(M)$.
We also have the induced metric $h^m$ on $K_M^{-m}$ with curvature $m\omega$. If $S\in H^0(K_{M}^{-m})$, its pointwise norm $|S|^2_{h^m}$ is a smooth nonnegative function on $M$.
We then let
\[\begin{split}\alpha_{m,1}(M)=\sup\bigg\{\alpha>0\ \bigg|\ &\exists C>0 \textrm{ with } \int_M (|S|^2_{h^m})^{-\alpha/m}\omega^n\leq C\\
&\textrm{ for all }S\in H^0(K_{M}^{-m}), \int_M |S|^2_{h^m}\omega^n=1\bigg\},\end{split}\]
and also
\[\begin{split}\alpha_{m,2}(M)=\sup\bigg\{\alpha>0\ &\bigg|\ \exists C>0 \textrm{ with } \int_M (|S_1|^2_{h^m}+|S_2|^2_{h^m})^{-\alpha/m}\omega^n\leq C\\
&\textrm{ for all }S_1,S_2\in H^0(K_{M}^{-m}), \int_M \langle S_i,S_j\rangle_{h^m}\omega^n=\delta_{ij}\bigg\}.\end{split}\]
It is clear that 
\begin{equation}\label{triv1}
\alpha_{m,2}(M)\geq \alpha_{m,1}(M),
\end{equation}
and (using the techniques in the Appendix) one can also see that 
\begin{equation}\label{triv2}
\alpha_{m,1}(M)\geq \alpha(M). 
\end{equation}
Moreover, these invariants do not depend on the choice of $\omega, h$ (one can also define their $G$-invariant counterparts, but we won't need them).
We have the following crucial result due to Tian \cite{Ti2} (see also \cite{Sh, CW})
\begin{theorem}[Tian \cite{Ti2}]\label{alpha3} For any $M$ in $\mathfrak{M}_6$ and for any $m\geq 1$ we have 
$$\alpha_{m,2}(M)>2/3.$$
\end{theorem}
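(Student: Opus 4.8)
The plan is to reduce the statement about $\alpha_{m,2}(M)$ to a statement about the pointwise singularity type of plurianticanonical sections on $M$, and then to rule out bad singularities using the hypothesis that $M$ is in $\mathfrak{M}_6$, i.e. that $M$ is $\mathbb{CP}^2$ blown up at $6$ points in general position. By the standard Hörmander/Skoda-type argument (the same one behind \eqref{triv1}, \eqref{triv2} and Tian's original estimate), to prove $\alpha_{m,2}(M) > 2/3$ it suffices to show that there exists $\alpha > 2/3$ such that for every pair $S_1, S_2 \in H^0(K_M^{-m})$ with $\int_M \langle S_i, S_j\rangle_{h^m}\,\omega^n = \delta_{ij}$, the function $(|S_1|^2_{h^m} + |S_2|^2_{h^m})^{-\alpha/m}$ is uniformly integrable. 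Writing $D = \frac{1}{m}\{|S_1|^2 + |S_2|^2 = 0\}$ as an effective $\mathbb{Q}$-divisor (the common zero locus of the pencil they span, with multiplicity), this is controlled by the log canonical threshold: one needs $\mathrm{lct}(M, D) \geq \alpha > 2/3$ uniformly, and by the normalization the coefficients of $mD$ are bounded so one is really bounding $\mathrm{lct}$ of members of the linear system $|K_M^{-m}|$ restricted to pencils, uniformly in $m$. The key reduction is that a lower bound $\mathrm{lct} \geq 2/3 + \varepsilon$ fails only if some pencil in $|K_M^{-m}|$ has a base component, or a base point of high multiplicity, or a highly singular member along a curve — and these are the configurations one must exclude.

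The main work is therefore a case analysis on the geometry of $M$ in $\mathfrak{M}_6$. First I would record that $-K_M$ is ample with $K_M^2 = 3$, that the anticanonical model is a cubic surface in $\mathbb{CP}^3$, and that the $27$ lines on the cubic are exactly the $(-1)$-curves on $M$; these are the only irreducible curves $C$ with $-K_M \cdot C = 1$, and any irreducible curve has $-K_M \cdot C \geq 1$. The general position hypothesis for $6$ points guarantees there is no configuration forcing an unexpected base curve or a point lying on too many $(-1)$-curves. Then, for a pencil $\{S_1, S_2\}$ in $|K_M^{-m}|$: if $x \in M$ is a point, $\mathrm{mult}_x D \leq$ (the number forced by $-K_M \cdot (\text{generic member}) = 3m$ and by intersecting with curves through $x$), and the relevant threshold at $x$ is at least $2/(\mathrm{mult}_x D)$, so one needs $\mathrm{mult}_x D < 3$; along a curve $C$, the coefficient of $C$ in $D$ is at most $(-K_M \cdot K_M^{-m})/(-K_M \cdot C) \cdot \frac1m = 3/(-K_M \cdot C)$, which is $\leq 3$ with equality only for the $27$ lines, and one shows the pencil cannot contain a line with coefficient $\geq 2/3$ in $D$ more than ``once'' because $S_1, S_2$ are linearly independent after the normalization. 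I would handle the line-component case and the high-multiplicity-point case separately, in each using that $H^0(K_M^{-m})$ separates the relevant jets for $m$ large (and checking the small $m$ cases, where $N_m$ is small, by hand via Riemann–Roch: $N_1 = 4$, $N_2 = 10$, etc.).

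The hard part will be making the bound \emph{uniform in $m$}: for fixed $m$ one can invoke semicontinuity of $\mathrm{lct}$ over the (finite-dimensional) parameter space of pencils, but as $m \to \infty$ one must prevent the worst pencils from degenerating toward a configuration of threshold exactly $2/3$. The mechanism that saves this is that any such limiting configuration would have to be supported on $(-1)$-curves with coefficients approaching the extremal value $2/3$, and for $M$ in $\mathfrak{M}_6$ in general position there is no effective $\mathbb{Q}$-divisor in $|{-K_M}|_{\mathbb{Q}}$ supported on $(-1)$-curves with a point of multiplicity $\geq 3/2$ or a coefficient $\geq 2/3$ along a line that is ``doubled'' — this is where the precise combinatorics of the $27$ lines and the genericity of the $6$ points enters, and where Tian's argument, supplemented by the later computations of Shi and Cheltsov–Wang cited in the excerpt, does the real work. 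I would organize this final step as a lemma: for every $M \in \mathfrak{M}_6$ there is $\varepsilon = \varepsilon(M) > 0$ so that every effective $\mathbb{Q}$-divisor numerically equivalent to $-K_M$ and arising as the (scaled) common zero divisor of a normalized pencil in some $|K_M^{-m}|$ has $\mathrm{lct} \geq 2/3 + \varepsilon$; the theorem follows immediately.
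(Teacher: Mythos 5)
The paper does not actually prove Theorem \ref{alpha3}; it is stated as a citation to Tian \cite{Ti2} (with the case of general $m$ deferred to \cite{Sh}), and the Appendix supplies only the general dictionary between $\alpha_{m,2}$ and log canonical thresholds, not the case analysis for the cubic surface. So you are attempting to reconstruct a proof that the paper delegates to the references, using essentially the algebraic translation the Appendix describes. That translation is the right starting point, but there is a conceptual gap in how you set it up.

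The Appendix is explicit that the quantity controlling $\alpha_{m,2}$ is $c\bigl(\frac{1}{m}\log(|S_1|^2 + |S_2|^2)\bigr)$, which is the log canonical threshold of $\frac{1}{m}\mathscr{I}$ where $\mathscr{I}$ is the \emph{ideal sheaf} generated by $S_1, S_2$ --- not of an effective $\mathbb{Q}$-divisor. You set $D = \frac{1}{m}\{|S_1|^2 + |S_2|^2 = 0\}$ and treat it as a $\mathbb{Q}$-divisor, but for two linearly independent sections on a surface the common zero locus is generically a zero-dimensional scheme (the base locus of the pencil, which decomposes into a fixed divisorial part plus a residual finite scheme). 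The lct of a zero-dimensional ideal behaves completely differently from that of a divisor (e.g. $\mathrm{lct}(\mathfrak{m}_x) = 2$ in dimension two), and this difference is the entire reason $\alpha_{m,2}$ is a better invariant than $\alpha_{m,1}$: a single section $S_1$ with a bad divisorial singularity has small $\mathrm{lct}$, but the orthonormal $S_2$ cannot share that divisor with the same multiplicity, so the base ideal is much milder. Your intersection-theoretic estimate ``coefficient along $C$ is at most $3/(-K_M\cdot C) \leq 3$'' is exactly the bound you'd use for $\alpha_{m,1}$ (a single divisor in $|{-}mK_M|$), and it is far too weak: a coefficient-$3$ line gives $\mathrm{lct} = 1/3$. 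The mechanism that saves the argument --- quantifying how little of a fixed component two $L^2$-orthonormal, hence linearly independent, sections can share --- is the actual content of Tian's proof, and your sketch does not supply it; saying the pencil ``cannot contain a line more than once'' does not bound the multiplicity of the fixed part. Similarly, ``a highly singular member along a curve'' is not by itself a failure mode: an isolated bad member $S_1$ is harmless unless $S_2$ correlates with it.

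A smaller point: the theorem asks $\alpha_{m,2}(M) > 2/3$ for each $m$ separately, and the application in Step 3 uses only the fixed $m = m_0$ coming from the partial $C^0$ estimate. Uniformity of the gap $\varepsilon$ in $m$ is therefore not required by the statement, so the passage you flag as ``the hard part'' is not actually needed here (indeed $\inf_m \alpha_{m,2}(M) = \alpha(M)$ can equal $2/3$ for some $M$ in $\mathfrak{M}_6$, per Theorem \ref{estima}, so a uniform-in-$m$ strict bound cannot hold in general).
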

Tian also proves that for any $M$ in $\mathfrak{M}_5$ we have $\alpha_{m,2}(M)\geq 3/4$, but we won't need this since we now have a better estimate from Theorem \ref{alpha2}. In fact, in \cite{Ti2} these are proved for $m$ any multiple of $6$, and the general case follows from \cite{Sh}.

In the Appendix we will outline how one can compute these $\alpha$-invariants, using algebraic geometric methods.
\setcounter{equation}{0}
\section{Outline of the main argument}
To summarize what we did so far, we assume that $5\leq k\leq 8$, and we need to prove the estimate \eqref{supest}. In fact, by Theorem \ref{alpha2}, we only need to consider the case $k=6$, but we will follow Tian's original presentation and consider all cases $5\leq k\leq 8$.

Let us first define the $m^{th}$ density of states function: if $M$ is any Fano manifold and $h, \omega$ are as in the previous section, then for any $m\geq 1$ we can define 
$$\rho_m(\omega)=\sum_{j=1}^{N_m} |S_j|^2_{h^m},$$
where $S_1,\dots,S_{N_m}$ are a basis of $H^0(K_M^{-m})$ which is orthonormal with respect to the $L^2$ inner product $\int_M\langle S_1,S_2\rangle_{h^m}\omega^n$. Clearly $\rho_m(\omega)$ is independent of the choice of basis, and is also unchanged if we scale $h$ by a constant. The integral  $\int_M \rho_m(\omega)\omega^n$ equals $N_m$, the dimension of $H^0(K_M^{-m})$. Moreover if $m$ is sufficiently large so that $K_M^{-m}$ is very ample, then $\rho_m(\omega)$ is strictly positive on $M$.

We now apply this construction to the K\"ahler-Einstein metrics $\omega_i$ and get functions $\rho_m(\omega_i)$.
\begin{defn}
We say that a ``partial $C^0$ estimate'' holds if there exist $m_0\geq 1$ and $c>0$ such that
\begin{equation}\label{partialc0}
\inf_i\inf_{M_i}\rho_{m_0}(\omega_i)\geq c>0.
\end{equation}
\end{defn}

We will explain the reason for this name later (in proposition \ref{partc0}). The proof of the estimate \eqref{supest} then proceeds in three (independent) steps.\\

\noindent{\bf Step 1. }A partial $C^0$ estimate holds.\\

\noindent{\bf Step 2. }If a partial $C^0$ estimate holds and if for any $M$ in $\mathfrak{M}_k$ we have
$\alpha_{m_0,1}(M)>2/3$, then \eqref{supest} holds.\\

\noindent{\bf Step 3. }If a partial $C^0$ estimate holds and if for any $M$ in $\mathfrak{M}_k$ we have
$\alpha_{m_0,1}(M)=2/3$ and $\alpha_{m_0,2}(M)>2/3$ then \eqref{supest} holds.\\

By combining these three steps together with Theorem \ref{estima} and Theorem \ref{alpha3} (remembering \eqref{triv1}, \eqref{triv2}), we see that in all the cases $5\leq k\leq 8$ the estimate \eqref{supest} holds, and we are done.
\setcounter{equation}{0}
\section{Step 1 - Orbifold Compactness}
In this section we will prove that a partial $C^0$ estimate holds. Before doing that, let us explain the reason for calling it a partial $C^0$ estimate. Let us fix a hermitian metric $\ti{h}_i$ on $K_{M_i}^{-1}$ with curvature $\ti{\omega}_i$ (the reference metric), and let $h_i=\ti{h}_ie^{-\vp_i}$, which is a metric on $K_{M_i}^{-1}$ with curvature $\omega_i$ (the K\"ahler-Einstein metric). These induce metrics $\ti{h}_i^m, h_i^m$ on $K_{M_i}^{-m}$. Notice that, for $m$ large, the dimension $H^0(K_{M_i}^{-m})$ is equal to $N_m$ independent of $i$ (since it is computed by the Riemann-Roch formula in terms of characteristic numbers that depend only on $k$).

\begin{proposition}\label{partc0}
If a partial $C^0$ estimate holds then there exist a constant $C$, sequences of real numbers
$0<\lambda^i_1\leq\dots\leq\lambda^i_{N_m}=1$ and a sequence of bases $\{\ti{S}^i_j\}_{1\leq j\leq N_m}$ of $H^0(K_{M_i}^{-m})$ with $\int_{M_i}\langle \ti{S}^i_p,\ti{S}^i_q\rangle_{\ti{h}_i^m}\ti{\omega}_i^2=\delta_{pq}$ such that
\begin{equation}\label{partiall}
\sup_i\sup_{M_i}\left|\vp_i-\sup_{M_i}\vp_i-\frac{1}{m}\log\sum_{j=1}^{N_m}|\lambda^i_j|^2 
|\ti{S}^i_j|^2_{\ti{h}_i^m}\right|\leq C,
\end{equation}
where $m=m_0$.
\end{proposition}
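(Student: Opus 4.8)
The plan is to compare, on $H^0(K_{M_i}^{-m})$ with $m=m_0$, the $L^2$ inner product determined by the K\"ahler--Einstein metric $\omega_i$ together with $h_i=\ti{h}_ie^{-\vp_i}$, and the $L^2$ inner product determined by the reference data $\ti{\omega}_i,\ti{h}_i$, and then to read off \eqref{partiall} from the elementary identity relating $\vp_i$ to the density function $\rho_m(\omega_i)$. First I record that $\rho_m(\omega_i)$ is bounded above and below uniformly in $i$. Since $h_i^m=\ti{h}_i^me^{-m\vp_i}$ we have $|S|^2_{h_i^m}=e^{-m\vp_i}|S|^2_{\ti{h}_i^m}$ for $S\in H^0(K_{M_i}^{-m})$, and the Bochner formula (the curvature of $h_i^m$ is $m\omega_i$ and $n=2$) gives $\triangle_{\omega_i}|S|^2_{h_i^m}\geq -2m|S|^2_{h_i^m}$; Moser iteration, using the uniform Sobolev constant of $\omega_i$ (Lemma \ref{moserr}) and the fixed volume $V=9-k$, then yields a uniform bound $\sup_{M_i}|S|^2_{h_i^m}\leq C\|S\|^2_{L^2(\omega_i)}$. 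Summing over an $L^2(\omega_i)$-orthonormal basis gives $\rho_m(\omega_i)\leq CN_m$, and combining with \eqref{partialc0} we get $0<c\leq\rho_m(\omega_i)\leq C$, hence $\left|\tfrac1m\log\rho_m(\omega_i)\right|\leq C$, uniformly in $i$.

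Next, on $H^0(K_{M_i}^{-m})$ I choose a basis $\{S^i_j\}$ that is $L^2(\omega_i)$-orthonormal and $L^2(\ti{\omega}_i)$-orthogonal (simultaneous diagonalization of the two inner products), and I set $b^i_j=\|S^i_j\|^2_{L^2(\ti{\omega}_i)}>0$, $B_i=\max_j b^i_j$, $\lambda^i_j=(b^i_j/B_i)^{1/2}$ and $\ti{S}^i_j=(b^i_j)^{-1/2}S^i_j$, relabelling so that $0<\lambda^i_1\leq\cdots\leq\lambda^i_{N_m}=1$. Then $\{\ti{S}^i_j\}$ is $L^2(\ti{\omega}_i)$-orthonormal, $S^i_j=\sqrt{B_i}\,\lambda^i_j\,\ti{S}^i_j$, and substituting into $\rho_m(\omega_i)=e^{-m\vp_i}\sum_j|S^i_j|^2_{\ti{h}_i^m}$ and taking logarithms gives the identity
\[ \vp_i-\sup_{M_i}\vp_i-\frac1m\log\sum_{j=1}^{N_m}|\lambda^i_j|^2|\ti{S}^i_j|^2_{\ti{h}_i^m}=\frac1m\log B_i-\sup_{M_i}\vp_i-\frac1m\log\rho_m(\omega_i). \]
By the first step, \eqref{partiall} therefore follows once we show $\left|\tfrac1m\log B_i-\sup_{M_i}\vp_i\right|\leq C$; note that $B_i=\max_{0\neq S}\|S\|^2_{L^2(\ti{\omega}_i)}/\|S\|^2_{L^2(\omega_i)}$ is the largest eigenvalue of the $\ti{\omega}_i$-product relative to the $\omega_i$-product.

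For the upper bound on $B_i$, for any $S$ estimate $|S|^2_{\ti{h}_i^m}=e^{m\vp_i}|S|^2_{h_i^m}\leq e^{m\sup_{M_i}\vp_i}\sup_{M_i}|S|^2_{h_i^m}$; integrating against $\ti{\omega}_i^2$ (with $\int_{M_i}\ti{\omega}_i^2=V$) and inserting the Moser bound of the first step gives $\|S\|^2_{L^2(\ti{\omega}_i)}\leq CVe^{m\sup_{M_i}\vp_i}\|S\|^2_{L^2(\omega_i)}$, whence $\tfrac1m\log B_i\leq\sup_{M_i}\vp_i+C$. For the lower bound, let $p_i\in M_i$ attain $\sup_{M_i}\vp_i$ and let $S_i$ be an $L^2(\omega_i)$-unit section realizing the maximal value $|S_i(p_i)|^2_{h_i^m}=\rho_m(\omega_i)(p_i)$, which by \eqref{partialc0} is $\geq c>0$; then $|S_i(p_i)|^2_{\ti{h}_i^m}=e^{m\sup_{M_i}\vp_i}|S_i(p_i)|^2_{h_i^m}\geq c\,e^{m\sup_{M_i}\vp_i}$. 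Since the $\ti{\omega}_i$ converge smoothly they have uniform bounded geometry, so the sub-mean-value inequality for plurisubharmonic functions, applied in a holomorphic chart of uniform size $r_0$ around $p_i$, gives $B_i\geq\|S_i\|^2_{L^2(\ti{\omega}_i)}\geq\int_{B(p_i,r_0)}|S_i|^2_{\ti{h}_i^m}\ti{\omega}_i^2\geq c_0|S_i(p_i)|^2_{\ti{h}_i^m}\geq c_0c\,e^{m\sup_{M_i}\vp_i}$, whence $\tfrac1m\log B_i\geq\sup_{M_i}\vp_i-C$. This proves $\left|\tfrac1m\log B_i-\sup_{M_i}\vp_i\right|\leq C$, and with the identity above the Proposition follows.

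The main obstacle is this last step, i.e. pinning down $\tfrac1m\log B_i=\sup_{M_i}\vp_i+O(1)$: the lower bound is exactly where the partial $C^0$ estimate \eqref{partialc0} is used (to bound $|S_i(p_i)|^2_{h_i^m}$ from below at the maximum point of $\vp_i$), and it also consumes the uniform Sobolev constant of $\omega_i$ (for the Moser estimate) and the uniform local geometry of the reference metrics $\ti{\omega}_i$ (for the reverse mean-value step). The remaining two steps are bookkeeping with the density function $\rho_m$ and the simultaneous diagonalization of the two $L^2$ structures.
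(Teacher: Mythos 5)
Your proposal is correct and follows essentially the same strategy as the paper: simultaneously diagonalize the two $L^2$ inner products on $H^0(K_{M_i}^{-m})$, rewrite the left-hand side of \eqref{partiall} via the identity $\log\rho_m(\omega_i)=-m\vp_i+\log B_i+\log\sum_j|\lambda^i_j|^2|\ti S^i_j|^2_{\ti h_i^m}$ (in the paper's notation $B_i=|\mu^i_{N_m}|^2$), invoke Lemma \ref{moserr} together with \eqref{partialc0} to control $\log\rho_m(\omega_i)$, and finally show $\left|\tfrac1m\log B_i-\sup_{M_i}\vp_i\right|\leq C$. The single place where you diverge is in the lower bound on $\log B_i$: the paper evaluates $c\leq\rho_m(\omega_i)\leq |\mu^i_{N_m}|^2\bigl(\sum_j|\ti S^i_j|^2_{\ti h_i^m}\bigr)e^{-m\vp_i}$ at the maximum point of $\vp_i$ and uses the Moser-iteration bound \eqref{c0norm} on the $\ti S^i_j$, while you use the extremal (peak) section $S_i$ at that maximum point together with a sub-mean-value estimate with respect to $\ti{\omega}_i$. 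Both are valid local estimates at the same point; one small imprecision is that $|S_i|^2_{\ti h_i^m}$ is not itself plurisubharmonic (the weight has positive curvature $m\ti\omega_i$), so the mean-value inequality should be applied to the local holomorphic representative $|f|^2$ in a trivialization and the bounded weight absorbed into the constant, which is clearly what you intend.
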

In the limit when $i$ goes to infinity, the sections $\ti{S}^i_j$ converge smoothly to a basis of sections on $M_\infty$ (see the proof of Step 2 below), and some of the $\lambda^i_j$ converge to zero. The intersection of the zero loci of the limit sections with limit coefficient positive will in general be a nonempty subvariety of $M_\infty$,
and \eqref{partiall} says that $\vp_i-\sup_{M_i}\vp_i$ blows up precisely along this subvariety in $M_i$ (which is diffeomorphic to $M_\infty$), thus the name ``partial $C^0$ estimate''.

Notice that the functions $\frac{1}{m}\log\sum_{j=1}^{N_m}|\lambda^i_j|^2 
|\ti{S}^i_j|^2_{\ti{h}_i^m}$ are also K\"ahler potentials for the metric $\ti{\omega}_i$ because we have
\begin{equation}\label{algpot}\ti{\omega}_i+\mn\de\db \frac{1}{m}\log\sum_{j=1}^{N_m}|\lambda^i_j|^2 
|\ti{S}^i_j|^2_{\ti{h}_i^m}=\frac{\iota^*\tau^*\omega_{FS}}{m}>0,
\end{equation}
where $\iota:M_i\to\mathbb{CP}^{N_m-1}$ is the Kodaira embedding map given by the sections $\{\ti{S}^i_j\}_{1\leq j\leq N_m},$ the map $\tau:\mathbb{CP}^{N_m-1}\to \mathbb{CP}^{N_m-1}$ is the automorphism induced by the diagonal matrix with entries $\{\lambda^i_j\}_{1\leq j\leq N_m}$, and $\omega_{FS}$ is the Fubini-Study metric on $\mathbb{CP}^{N_m-1}$. The functions $\frac{1}{m}\log\sum_{j=1}^{N_m}|\lambda^i_j|^2 
|\ti{S}^i_j|^2_{\ti{h}_i^m}$ are sometimes referred to as ``algebraic K\"ahler potentials'' in the literature, and so a partial $C^0$ estimate says that we can uniformly approximate the potentials $\vp_i$ of the K\"ahler-Einstein metrics with algebraic potentials.

Before we prove this, we have the following lemma
\begin{lemma}\label{moserr} For any $m\geq 1$ there is a constant $C$ that depends only on $m$ and $k$ so that for all $i$ we have
$$\rho_m(\omega_i)\leq C.$$
\end{lemma}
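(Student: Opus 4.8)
The statement to prove is Lemma~\ref{moserr}: a uniform upper bound $\rho_m(\omega_i)\leq C$ where $C$ depends only on $m$ and $k$. The plan is to exploit the fact that $\rho_m(\omega_i)$ is a sup of pointwise norms of $L^2$-normalized plurianticanonical sections, and to bound each such pointwise norm by a mean-value (Moser-type) inequality whose constant is controlled by the Sobolev constant of $(M_i,\omega_i)$.

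First I would recall the definition: if $S_1,\dots,S_{N_m}$ is an $L^2(\omega_i)$-orthonormal basis of $H^0(K_{M_i}^{-m})$, then $\rho_m(\omega_i)(x)=\sum_j|S_j|^2_{h_i^m}(x)$, and by the standard extremal characterization this equals $\sup\{|S|^2_{h_i^m}(x):S\in H^0(K_{M_i}^{-m}),\ \int_{M_i}|S|^2_{h_i^m}\omega_i^2=1\}$. So it suffices to bound $|S|^2_{h_i^m}(x)$ uniformly for every $L^2$-normalized section $S$ and every $x$. Next, for a holomorphic section $S$ of a line bundle of curvature $m\omega_i$, the Bochner--Weitzenb\"ock formula gives $\Delta_{\omega_i}|S|^2_{h_i^m}\geq -(\text{const}\cdot m)|S|^2_{h_i^m}+|\nabla S|^2\geq -Cm\,|S|^2_{h_i^m}$, i.e. $u=|S|^2_{h_i^m}$ is a nonnegative subsolution of a linear elliptic inequality $\Delta_{\omega_i}u\geq -Cm\,u$. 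I would then apply Moser iteration to this subsolution: this yields $\sup_{M_i}u\leq C'\int_{M_i}u\,\omega_i^2=C'$, where $C'$ depends only on $m$, the dimension (fixed, $=2$), the volume $V=9-k$, and the Sobolev constant of $(M_i,\omega_i)$.

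The key remaining input — and the part that does real work — is that the Sobolev (and Poincar\'e) constants of the K\"ahler--Einstein metrics $\omega_i$ are bounded independently of $i$. This is exactly the fact alluded to in the paragraph after \eqref{har1} in the excerpt. It follows because $\mathrm{Ric}(\omega_i)=\omega_i>0$, so by the Bishop--Gromov comparison and Bonnet--Myers the diameter and volume are controlled (indeed $V$ is the fixed topological number $9-k$), and then by a theorem of Croke (or the Li--Yau / Bakry--\'Emery type bounds) a uniform Sobolev inequality holds with constant depending only on the lower Ricci bound, the dimension, and the volume. Feeding this uniform Sobolev constant into the Moser iteration closes the argument and gives $C=C(m,k)$.

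The main obstacle is the uniform Sobolev estimate: Moser iteration is routine once one has it, and the Bochner inequality for holomorphic sections is standard, but the non-collapsing / uniform Sobolev bound genuinely uses the positivity of the Ricci curvature (which is special to the K\"ahler--Einstein setting here — it would fail for the reference metrics $\ti\omega_i$) together with a comparison-geometry input. I would state it as a lemma, cite Croke's inequality for the isoperimetric/Sobolev constant in terms of a Ricci lower bound and volume, and remark that $\mathrm{diam}(M_i,\omega_i)\leq\pi\sqrt{2}$ by Bonnet--Myers and $\vol(M_i,\omega_i)=(9-k)/2$ (up to the normalization constant for $\omega_i^2$) are both $i$-independent, so Croke's constant is $i$-independent; everything downstream is then a mechanical application of Moser iteration.
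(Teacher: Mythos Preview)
Your proposal is correct and follows essentially the same route as the paper: Bochner gives $\Delta_{\omega_i}|S|^2_{h_i^m}\geq -2m|S|^2_{h_i^m}$, Myers plus the fixed volume $9-k$ feed into Croke--Li to get a uniform Sobolev constant, and Moser iteration finishes. The only cosmetic difference is that you invoke the extremal characterization $\rho_m(x)=\sup_{\|S\|_{L^2}=1}|S|^2(x)$ to reduce to a single section, whereas the paper bounds each orthonormal basis element and sums (picking up a harmless factor of $N_m=N_m(k,m)$); your numerical constants for the diameter and volume are slightly off but inconsequential.
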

\begin{proof}
Let $S$ be any holomorphic section of $K_{M_i}^{-m}$.
First, one easily computes that in general
\begin{equation}\label{moser}
\Delta_{\omega_i}|S|^2_{h_i^m}=|\nabla S|^2_i -2m|S|^2_{h_i^m}\geq -2m|S|^2_{h_i^m},
\end{equation}
where $\Delta_{\omega_i}$ is the Laplacian of $\omega_i$. 
Next, recall that the volume of $(M_i,\omega_i)$ is equal to the topological number $$V=\int_{M_i}\omega_i^2=c_1(M_i)^2=9-k.$$
Also since $\Ric(\omega_i)=\omega_i$, Myers' Theorem implies that the diameter of $(M_i,\omega_i)$ is bounded above by $\sqrt{3}\pi$. A classical result of Croke \cite{Cr} and Li \cite{li} then show that $(M_i,\omega_i)$ has a uniform bound on the Sobolev constant, that depends only on $k$ (in general the Sobolev constant bound for a Riemannian metric on a closed manifold depends on lower bounds for the volume and for the Ricci curvature and on an upper bound for the diameter). One can then apply the standard Moser iteration method to the differential inequality \eqref{moser} to get
\begin{equation}\label{supbd}
\sup_{M_i}|S|^2_{h_i^m}\leq C\int_{M_i}|S|^2_{h_i^m}\omega_i^2=C,
\end{equation}
where $C$ depends only on $k$ and $m$. Taking now an orthonormal basis of sections and summing we get the result.
\end{proof}
\begin{proof}[Proof of Proposition \ref{partc0}]
Thanks to Lemma \ref{moserr} we know that a partial $C^0$ estimate is equivalent to an estimate 
\begin{equation}\label{mos1}
\sup_i\sup_{M_i}|\log\rho_m(\omega_i)|\leq C,
\end{equation}
where here $m=m_0$.
We now take a basis $\{S^i_j\}_{1\leq j\leq N_m}$ of $H^0(K_{M_i}^{-m})$ with $$\int_{M_i}\langle S^i_p,S^i_q\rangle_{h_i^m}\omega_i^2=\delta_{pq},$$ and notice that since $h_i^m=e^{-m\vp_i}\ti{h}_i^m$ we clearly have
$$\vp_i=\frac{1}{m}\log\frac{\sum_{j=1}^{N_m}|S^i_j|^2_{\ti{h}^m_i}}{\sum_{j=1}^{N_m}|S^i_j|^2_{h^m_i}},$$
which is equivalent to
\begin{equation}\label{mos2}
\vp_i-\frac{1}{m}\log \sum_{j=1}^{N_m}|S^i_j|^2_{\ti{h}^m_i}=-\frac{1}{m}\log\rho_m(\omega_i).
\end{equation}
It follows from \eqref{mos1} and \eqref{mos2} that a partial $C^0$ estimate is equivalent to an estimate
$$\sup_i\sup_{M_i}\left|\vp_i-\frac{1}{m}\log \sum_{j=1}^{N_m}|S^i_j|^2_{\ti{h}^m_i} \right|\leq C.$$
We now choose another basis $\{\ti{S}^i_j\}_{1\leq j\leq N_m}$ of $H^0(K_{M_i}^{-m})$ with $$\int_{M_i}\langle \ti{S}^i_p,\ti{S}^i_q\rangle_{\ti{h}_i^m}\ti{\omega}_i^2=\delta_{pq},$$ and up to modifying $S^i_j$ and $\ti{S}^i_j$ by unitary transformations, we can assume that 
$$S^i_j=\mu^i_j \ti{S}^i_j,$$
for some positive real numbers $\mu^i_j$, with $0<\mu^i_1\leq\dots\leq \mu^i_{N_m}$. We then let $\lambda^i_j=\mu^i_j/\mu^i_{N_m}$ and we get
\begin{equation}\label{part1}
\sup_i\sup_{M_i}\left|\vp_i-\frac{2}{m}\log\mu^i_{N_m}-\frac{1}{m}\log \sum_{j=1}^{N_m}|\lambda^i_j|^2|\ti{S}^i_j|^2_{\ti{h}^m_i} \right|\leq C.
\end{equation}
We now claim that if a partial $C^0$ estimate holds, then we also
have
\begin{equation}\label{part2}
\sup_i\sup_{M_i}\left|\frac{2}{m}\log\mu^i_{N_m}-\sup_{M_i}\vp_i \right|\leq C.
\end{equation}
Once this is proved, combining \eqref{part1} and \eqref{part2} we get \eqref{partiall}. To prove \eqref{part2}, first use \eqref{supbd} to get
$$C\geq \sup_{M_i}|S^i_{N_m}|^2_{h_i^m}=|\mu^i_{N_m}|^2 \sup_{M_i}|\ti{S}^i_{N_m}|^2_{\ti{h}^m_i}e^{-m\sup_{M_i}\vp_i},$$
and the fact $\int_{M_i}|\ti{S}^i_{N_m}|^2_{\ti{h}^m_i}\ti{\omega}_i^2=1$
implies that $\sup_{M_i}|\ti{S}^i_{N_m}|^2_{\ti{h}^m_i}\geq 1/V,$
and so
$$\sup_i\sup_{M_i}\left(\frac{2}{m}\log\mu^i_{N_m}-\sup_{M_i}\vp_i\right)\leq C.$$
On the other hand the partial $C^0$ estimate \eqref{partialc0} implies that
\begin{equation}\label{part3}
0<c\leq \rho_{m}(\omega_i)=\sum_{j=1}^{N_m}|S^i_j|^2_{h_i^m}\leq
|\mu^i_{N_m}|^2\sum_{j=1}^{N_m}|\ti{S}^i_j|^2_{\ti{h}_i^m}e^{-m\vp_i},
\end{equation}
and arguing as in Lemma \ref{moserr} we can show that
\begin{equation}\label{c0norm}
\sup_{i,j} \sup_{M_i}|\ti{S}^i_j|^2_{\ti{h}^m_i}\leq C.
\end{equation}
In fact, the metrics $\ti{\omega}_i$ converge smoothly and so in particular they have uniform bounds on their scalar curvature and Sobolev constant. This implies that for any $i,j$ we have
$$\Delta_{\ti{\omega}_i}|\ti{S}^i_j|^2_{\ti{h}^m_i}\geq -C|\ti{S}^i_j|^2_{\ti{h}^m_i}.$$
Moser iteration then proves \eqref{c0norm}. This together with \eqref{part3}, evaluated at the point where $\vp_i$ achieves its maximum,
gives the reverse inequality
$$\sup_i\sup_{M_i}\left(\sup_{M_i}\vp_i-\frac{2}{m}\log\mu^i_{N_m}\right)\leq C,$$
which completes the proof of \eqref{part2}.
\end{proof}

We now outline the basic ideas in the proof of the partial $C^0$ estimate \eqref{partialc0}, referring to \cite{Ti2} for the details.
We want to prove that we have the estimate
\[\inf_i\inf_{M_i}\rho_{m_0}(\omega_i)\geq c>0,\]
for some constants $m_0, c>0$. If this did not hold, then for any given $m$ we could find a subsequence (still denoted by $i$) and points $x_i\in M_i$ so that
\begin{equation}\label{contr3}
\rho_m(\omega_i)(x_i)\to 0.
\end{equation}
This is proved in three steps:
\begin{theorem}[Orbifold Compactness]\label{comp1}
If $(M_i,\omega_i)$ is a sequence of K\"ahler-Einstein surfaces in $\mathfrak{M}_k$, then a subsequence converges (in the sense described in theorem \ref{comp3}) to a K\"ahler-Einstein orbifold surface $(X,\omega_\infty)$.
\end{theorem}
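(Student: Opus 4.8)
The plan is to establish compactness for the sequence $(M_i,\omega_i)$ using the uniform geometric controls that the K\"ahler-Einstein condition forces, and then to identify the limit as an orbifold. The starting point is that $\Ric(\omega_i)=\omega_i>0$, so by Myers' theorem $\diam(M_i,\omega_i)\leq\sqrt{3}\pi$, and the volume is the fixed topological number $V=\int_{M_i}\omega_i^2=9-k$; moreover the Euler characteristic and signature of $M_i$ are fixed (they depend only on $k$), so by the Gauss-Bonnet and signature formulas the $L^2$-norm $\int_{M_i}|\Rm(\omega_i)|^2\,\omega_i^2$ is a fixed constant. Thus we are in the setting of a sequence of Einstein $4$-manifolds with bounded diameter, bounded below volume, and bounded $L^2$ curvature. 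First I would invoke the Anderson--Bando--Kasue--Nakajima (and Tian, Cheeger--Tian) $\varepsilon$-regularity and orbifold compactness theory: such a sequence subconverges in the pointed Gromov--Hausdorff topology to a length space $X$ which is a smooth Einstein manifold away from finitely many points, and near each singular point $X$ is modelled on a flat cone $\mathbb{C}^2/\Gamma$ for a finite subgroup $\Gamma\subset U(2)$; away from the singular set the convergence is in $C^\infty$. The key analytic input making this work here is the uniform bound on the Sobolev constant (Croke--Li, as already used in Lemma \ref{moserr}), which feeds the $\varepsilon$-regularity: wherever the curvature energy $\int_{B_r(x_i)}|\Rm|^2\,\omega_i^2$ is below a universal threshold, one gets pointwise curvature bounds on a smaller ball, hence local $C^\infty$ control; the finitely many places where energy concentrates become the orbifold points.

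Next I would promote the Riemannian convergence to a statement about complex structures and K\"ahler metrics. Since the K\"ahler-Einstein metrics $\omega_i$ have $J_i$ parallel, on regions of uniform curvature control one gets uniform bounds on $J_i$ and its derivatives with respect to a fixed background; a diagonal/Ascoli-Arzel\`a argument on an exhaustion of the regular part of $X$ yields a limiting integrable complex structure $J_\infty$ there, compatible with $\omega_\infty$, so that $(X_{\mathrm{reg}},\omega_\infty,J_\infty)$ is a smooth K\"ahler-Einstein manifold. Near an orbifold point the model being $\mathbb{C}^2/\Gamma$ with $\Gamma\subset U(2)$ shows the singularity is a complex-analytic quotient singularity, so $X$ is naturally a K\"ahler-Einstein orbifold surface in the complex-analytic sense, and $\omega_\infty$ is a genuine orbifold K\"ahler-Einstein metric satisfying $\Ric(\omega_\infty)=\omega_\infty$. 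One should also record that the reference metrics $\ti\omega_i$, being restrictions of a fixed Fubini--Study metric under the family construction, converge, which pins down the cohomological normalisation of the limit. The precise meaning of this convergence is exactly what Theorem \ref{comp3} (referenced but not yet stated) will spell out, so at this stage I would simply assert convergence ``in the sense of Theorem \ref{comp3}'' and defer the bookkeeping.

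The main obstacle is the $\varepsilon$-regularity / orbifold-point analysis itself: showing that curvature concentration can only happen at finitely many points, that a suitable rescaling (blow-up) at such a point converges to a complete Ricci-flat ALE space, and that the tangent cone at infinity of that ALE space is $\mathbb{C}^2/\Gamma$ with $\Gamma$ finite, so that the singularity is isolated and of orbifold type with no ``bubble tree'' subtleties beyond what the theory handles. This is where the dimension-four Einstein constraint, the Bishop--Gromov volume comparison (using $\Ric>0$), the Sobolev bound, and the fixed curvature energy all have to be combined; in a genuine writeup one would cite Anderson and Bando--Kasue--Nakajima for the clean statement. A secondary point requiring care is that the number of orbifold points and the orders of the local groups are a priori bounded (by the fixed curvature energy, via the fact that each bubble carries a definite quantum of $\int|\Rm|^2$), which is what guarantees $X$ is a compact orbifold rather than something with infinitely many singularities. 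Everything downstream — identifying $\omega_\infty$ as K\"ahler-Einstein, transporting plurianticanonical sections to the limit, and running the partial $C^0$ estimate — then rests on this compactness theorem.
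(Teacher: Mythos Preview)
Your proposal is correct and follows the same route as the paper: verify the uniform diameter bound (Myers), the fixed volume $9-k$, and the uniform $L^2$ curvature bound (Gauss--Bonnet/Chern number computation), then invoke the Anderson--Bando--Kasue--Nakajima--Tian orbifold compactness theorem (Theorem~\ref{comp3}) and note that the limit inherits a K\"ahler structure. The paper is actually terser than you are --- it simply states Theorem~\ref{comp3} as a black box and checks its three hypotheses, whereas you also sketch the $\varepsilon$-regularity and blow-up analysis that underlie that theorem; either level of detail is appropriate here.
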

We won't give here the formal definition of orbifolds (see \cite{Ti2}), but we will just remark that for a K\"ahler-Einstein orbifold $(X,\omega_\infty)$ as above one can define the density of states function $\rho_m(\omega_\infty)$ using orbifold sections of the orbifold plurianticanonical bundle. By applying H\"ormander's $L^2$ estimates for the $\db$ operator \cite{Ho}, Tian proves:
\begin{theorem}[Tian \cite{Ti2}]\label{limin}
For any $m\geq 0$, if a sequence $(M_i,\omega_i)$ of K\"ahler-Einstein surfaces in $\mathfrak{M}_k$ converges to a K\"ahler-Einstein orbifold surface $(X,\omega_\infty)$, then we have
\begin{equation}\label{contr1}\liminf_{i\to\infty}\inf_{M_i}\rho_m(\omega_i)\geq \inf_X\rho_m(\omega_\infty).\end{equation}
\end{theorem}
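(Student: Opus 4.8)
The plan is to exploit the variational description of the density of states function together with H\"ormander's $L^2$ estimate to transplant peak sections from the orbifold limit back to the $M_i$'s. Recall first that if $h^m$ denotes a metric on $K_M^{-m}$ with curvature $m\omega$, then for every $x\in M$
\begin{equation*}
\rho_m(\omega)(x)=\sup\left\{\,|S|^2_{h^m}(x)\ :\ S\in H^0(K_M^{-m}),\ \int_M |S|^2_{h^m}\,\omega^n=1\,\right\},
\end{equation*}
since $\rho_m(\omega)(x)$ is precisely the squared norm of the evaluation functional $S\mapsto S(x)$ on $H^0(K_M^{-m})$ equipped with its $L^2(\omega)$ inner product; the same extremal characterization holds verbatim on the orbifold $X$ using orbifold plurianticanonical sections. (The case $m=0$ is trivial since $\rho_0\equiv1$, so from now on $m\geq1$.) Hence it suffices to prove that for every $\varepsilon>0$, for all large $i$ and all $x\in M_i$, there is a holomorphic section $S$ of $K_{M_i}^{-m}$ with $\int_{M_i}|S|^2_{h_i^m}\omega_i^2\leq1$ and $|S|^2_{h_i^m}(x)\geq b-\varepsilon$, where $b:=\inf_X\rho_m(\omega_\infty)$. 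I would argue by contradiction: if this fails there is a subsequence and points $x_i\in M_i$ with $\rho_m(\omega_i)(x_i)\to a<b$.

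First I would observe that such $x_i$ cannot remain at a definite distance from the finitely many points of $M_i$ at which curvature concentrates in the orbifold compactness of Theorem \ref{comp1}: away from those points $\omega_i$ and $J_i$ converge smoothly to the metric and complex structure of a ball in the regular part $X_{\mathrm{reg}}$, and then $\rho_m(\omega_i)\to\rho_m(\omega_\infty)\geq b$ there, contradicting $\rho_m(\omega_i)(x_i)\to a$. So, passing to a further subsequence, $x_i$ converges to an orbifold point $y_\infty\in X$. I would then fix an $L^2$-normalized peak orbifold section $S_\infty\in H^0(K_X^{-m})$ at $y_\infty$, so that $|S_\infty|^2_{h_\infty^m}(y_\infty)=\rho_m(\omega_\infty)(y_\infty)\geq b$, and note that Moser iteration exactly as in Lemma \ref{moserr} also gives the a priori bound $\sup_X|S_\infty|^2_{h_\infty^m}\leq C$.

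The core of the argument is to transplant $S_\infty$ back to $M_i$. Using the convergence of Theorem \ref{comp1} I would identify a large portion of $M_i$ with a large portion of $X$ by maps $F_i$ that are almost holomorphic (because $J_i\to J_\infty$) and almost isometric, chosen so that $x_i$ is carried close to $y_\infty$, and set $\sigma_i:=\chi_i\,F_i^*S_\infty$, with $\chi_i$ a cutoff equal to $1$ on a fixed ball around $x_i$ and vanishing near the other concentration points and near the boundary of the identified region. Then
\begin{equation*}
\db_{J_i}\sigma_i=(\db_{J_i}\chi_i)\,F_i^*S_\infty+\chi_i\,\db_{J_i}(F_i^*S_\infty),
\end{equation*}
and I would show both terms go to zero in $L^2(\omega_i)$: the second because $J_i\to J_\infty$ while $\db_{J_\infty}S_\infty=0$, the first because $|S_\infty|$ is bounded and $\chi_i$ can be taken with $\int_{M_i}|\db_{J_i}\chi_i|^2\omega_i^2\to0$ by a capacity argument (isolated points have vanishing capacity in real dimension four, so logarithmic cutoffs around the concentration points have arbitrarily small Dirichlet energy). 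Next I would apply H\"ormander's $L^2$ estimate for $\db$ on $K_{M_i}^{-m}$, which has positive curvature $m\omega_i$: there is a smooth section $u_i$ with $\db_{J_i}u_i=\db_{J_i}\sigma_i$ and $\|u_i\|_{L^2(\omega_i)}^2\leq\tfrac1{m+1}\|\db_{J_i}\sigma_i\|_{L^2(\omega_i)}^2\to0$, the constant depending only on $m$ thanks to $\Ric(\omega_i)=\omega_i$. Then $S_i:=\sigma_i-u_i$ is holomorphic, has $L^2$ norm at most $\|\sigma_i\|_{L^2(\omega_i)}+\|u_i\|_{L^2(\omega_i)}=1+o(1)$, and near $x_i$, where $\db_{J_i}\sigma_i=0$ and hence $u_i$ is holomorphic, the inequality $\Delta_{\omega_i}|u_i|^2_{h_i^m}\geq-2m|u_i|^2_{h_i^m}$ (as in \eqref{moser}) together with Moser iteration on a fixed ball (using the uniform Sobolev constant from Lemma \ref{moserr}) gives $|u_i|^2_{h_i^m}(x_i)\leq C\|u_i\|^2_{L^2(\omega_i)}\to0$. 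Therefore $|S_i|^2_{h_i^m}(x_i)\to|S_\infty|^2_{h_\infty^m}(y_\infty)\geq b$, and after normalizing $S_i$ we contradict $\rho_m(\omega_i)(x_i)\to a<b$.

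The step I expect to be the main obstacle is precisely the transplantation near the orbifold singularities. When $y_\infty$ is a singular point, the piece of $M_i$ converging to it is a bubble where curvature blows up, so one must use the local uniformization $\mathbb{C}^2/\Gamma$ and the fine structure of the degeneration from Theorem \ref{comp1} to construct $F_i$ there, and to control both $\db_{J_i}(F_i^*S_\infty)$ (where the complex structures need not converge) and the placement of the cutoff so that $x_i$ stays away from $\mathrm{supp}(\db_{J_i}\chi_i)$. This is exactly the ingredient we are quoting rather than proving. Everything else---the variational characterization, the essentially universal H\"ormander constant coming from $\Ric(\omega_i)=\omega_i$, and the Moser-iteration bound on the correction---is routine given the uniform geometry (bounded Ricci curvature, diameter $\leq\sqrt3\,\pi$, uniform Sobolev constant) established in Section 5.
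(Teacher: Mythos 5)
Your route — the extremal characterization of $\rho_m$ as the squared norm of the evaluation functional, transplanting a peak orbifold section via cutoff plus H\"ormander's $L^2$ estimate (with constant controlled by $\Ric(\omega_i)=\omega_i$ and the positivity of $K^{-m}$), and Moser iteration to make the correction term small at the base point — is exactly the approach the paper indicates (``By applying H\"ormander's $L^2$ estimates for the $\db$ operator, Tian proves\dots''); the paper does not spell out the details either, so your sketch is consistent with it and you correctly isolate the genuinely hard ingredient, namely the construction of the almost-holomorphic identification $F_i$ and the cutoff through the neck region collapsing onto an orbifold singularity.

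One point to tighten: the reduction to ``$x_i$ converges to a singular point,'' justified by asserting $\rho_m(\omega_i)\to\rho_m(\omega_\infty)$ locally on the regular part, is not a consequence of local smooth convergence of $(\omega_i,J_i)$ alone. The density $\rho_m(\omega_i)$ is built from a \emph{global} $L^2$-orthonormal basis; one needs that no $L^2$-mass escapes to the singular points (handled by the Moser $C^0$ bound of Lemma \ref{moserr} and the shrinking volume of the singular neighborhoods) \emph{and} that limits of sections on $M_i$ span all of $H^0(K_X^{-m})$ — the latter is again a H\"ormander extension statement, not a consequence of local convergence. So the regular-limit case requires the same machinery, merely with an easier transplantation because the cutoff can live where the geometry converges smoothly. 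It is cleaner, and avoids the appearance of circularity, to run the H\"ormander construction uniformly for an arbitrary limit point $y_\infty\in X$ and dispense with the dichotomy. With that adjustment the argument is sound.
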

Finally we have the following proposition, which is also proved using the $L^2- \db$ estimates (it can also be proved using algebraic geometry).
\begin{proposition}\label{comp2}
If a sequence $(M_i,\omega_i)$ of K\"ahler-Einstein surfaces in $\mathfrak{M}_k$ converges to a K\"ahler-Einstein orbifold surface $(X,\omega_\infty)$, then there is a positive integer $m$ that depends only on $k$ so that
\begin{equation}\label{contr2}\inf_X\rho_m(\omega_\infty)>0.\end{equation}
\end{proposition}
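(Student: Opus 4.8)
The plan is to show that for a single integer $m$ chosen uniformly in $k$ one can produce, at an arbitrary point of $X$, a global $m$-plurianticanonical section of $K_X^{-m}$ (in the orbifold sense) that does not vanish there. Since $X$ is compact, $\rho_m(\omega_\infty)$ is then continuous and pointwise positive on $X$, hence bounded below by a positive constant, which is exactly \eqref{contr2}. The construction of such a non-vanishing ``peak section'' is the usual application of H\"ormander's $L^2$ estimate for $\db$; the only point that is not routine is that everything must be carried out with an integer $m$ (and with estimate constants) that are independent of which sequence $(M_i,\omega_i)$ in $\mathfrak{M}_k$ was chosen, and this forces a uniform control of the orbifold singularities of $X$.

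First I would record the structure of the limit. By Theorem \ref{comp1}, $X$ is a Fano K\"ahler--Einstein orbifold surface with finitely many isolated quotient singularities, each modeled on $\mathbb{C}^2/\Gamma$ with $\Gamma\subset U(2)$ finite and acting freely on $S^3$. The number of such points and the orders $|\Gamma|$ are bounded in terms of $k$ alone: one has $\int_X|\Rm(\omega_\infty)|^2\leq\liminf_i\int_{M_i}|\Rm(\omega_i)|^2$, and for a K\"ahler--Einstein surface the latter integral is a fixed universal combination of the Chern numbers $c_1(M_i)^2=9-k$ and $c_2(M_i)=3+k$, hence depends only on $k$; on the other hand each orbifold point contributes at least a definite amount to $\int_X|\Rm(\omega_\infty)|^2$, by a Chern--Gauss--Bonnet (or Hirzebruch signature) defect that is bounded below by a function of $|\Gamma|$ tending to infinity with $|\Gamma|$. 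Let $N=N(k)$ be such a bound on all orders $|\Gamma|$, and set $m_0=\mathrm{lcm}(1,2,\dots,N)$, so that whenever $m_0\mid m$, near every singular point of every such $X$ the orbifold line bundle $K_X^{-m}$ is an honest line bundle, with a local frame that lifts to a $\Gamma$-invariant frame of $K_{\mathbb{C}^2}^{-m}$ in the uniformizing chart.

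Next comes the peak-section construction. Fix $x\in X$. On a small neighborhood of $x$ (in the local uniformizing chart if $x$ is singular) choose a holomorphic frame $e$ of $K_X^{-m}$ with $e(x)\neq 0$, together with a $\Gamma$-invariant radial cutoff $\chi$, and set $s=\chi e$. Take the weight $\psi=(2+\varepsilon)\log\!\big(d(\cdot,x)^2\big)$, globally defined via a cutoff; it is plurisubharmonic near $x$ and just non-integrable against $\omega_\infty^2$ at $x$ in complex dimension two. Because $\Ric(\omega_\infty)=\omega_\infty$ and $\mn\de\db\psi\geq 0$, the curvature condition of H\"ormander's theorem holds automatically: the relevant curvature $\mn\Theta\big(K_X^{-m}\otimes e^{-\psi}\big)+\Ric(\omega_\infty)$ equals $(m+1)\omega_\infty+\mn\de\db\psi\geq(m+1)\omega_\infty>0$. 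Hence, working on the orbifold $X$ (whose singular locus is negligible), one solves $\db u=\db s$ with $\int_X|u|^2e^{-\psi}\,\omega_\infty^2\leq C\int_X|\db s|^2e^{-\psi}\,\omega_\infty^2<\infty$, the right-hand side being finite because $\db s$ is supported where $e^{-\psi}$ is bounded. Then $S:=s-u$ lies in $H^0(X,K_X^{-m})$, and finiteness of $\int_X|u|^2e^{-\psi}$ at the pole of $\psi$ forces $u(x)=0$, so $S(x)=e(x)\neq 0$. Note that no largeness of $m$ was needed here: any $m$ with $m_0\mid m$ works, simultaneously at every $x\in X$ and for every limit $X$, and $m_0=m_0(k)$.

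Finally, with $X$ compact and $\rho_m(\omega_\infty)$ continuous and pointwise positive, we conclude $\inf_X\rho_m(\omega_\infty)>0$. (Alternatively one argues algebraically: the possible limits $X$ form a bounded family of log Del Pezzo surfaces whose quotient singularities have index $\leq N(k)$, so $-mK_X$ is very ample as an orbifold bundle for a uniform $m$, again giving $\rho_m>0$.) I expect the main obstacle to be precisely the uniformity in the second step: showing that the bound on the number of singular points and on the orders $|\Gamma|$ is genuinely topological, and that $\omega_\infty$ has uniformly bounded geometry near each singular point, so that the weight $\psi$ and the constant $C$ in the $L^2$ estimate do not degenerate along the sequence. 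Once this uniform control of the orbifold singularities is in place, the H\"ormander peak-section argument itself is standard.
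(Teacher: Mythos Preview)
Your approach is the one the paper indicates: the paper does not write out a proof of this proposition but says it ``is also proved using the $L^2$-$\db$ estimates (it can also be proved using algebraic geometry),'' and that the uniform bound on the number of singular points and on the orders of the local groups (coming from Theorem~\ref{comp3}) is what makes $m$ depend only on $k$. Your outline---choose $m$ divisible by all the group orders, build a local non-vanishing section at an arbitrary point, perturb by solving $\db u=\db s$ with a logarithmic weight, and conclude $\rho_m>0$ everywhere---is exactly the H\"ormander peak-section argument the paper has in mind.

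One technical slip is worth flagging. The weight $\psi=(2+\varepsilon)\log d(\cdot,x)^2$, once cut off to a global function on $X$, does \emph{not} satisfy $\mn\de\db\psi\geq 0$ on all of $X$: the cutoff region contributes a bounded negative part, of size controlled by the derivatives of the cutoff and by $\log$ of the cutoff radius. So the curvature inequality reads $(m+1)\omega_\infty+\mn\de\db\psi\geq (m+1-C)\omega_\infty$ for some $C>0$, and one \emph{does} need $m$ larger than this $C$; your claim that ``no largeness of $m$ was needed here'' is not quite right. The point that saves the argument---and which you correctly identify elsewhere---is that $C$ depends only on the local geometry of $(X,\omega_\infty)$ at scale comparable to the cutoff radius, and this geometry is uniformly controlled: the K\"ahler--Einstein condition, the volume $9-k$, the diameter bound, and the bound on the orders $|\Gamma|\leq N(k)$ together give uniform bounds on injectivity radius away from the singular set and on the model geometry near each singular point. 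Hence one can choose the cutoff, and with it the constant $C$ and the threshold for $m$, uniformly in $k$. With that correction your outline is complete and matches the paper's intended proof.
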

To prove the partial $C^0$ estimate \eqref{partialc0} it now suffices to apply theorems \ref{comp1} and \ref{limin} and choose $m_0=m$ as in proposition \ref{comp2}, since then \eqref{contr1}, \eqref{contr2}  contradict \eqref{contr3}. In fact the number $m_0$ can be made explicit as a function of $k$.

We won't prove these results here, but we will show how the compactness theorem \ref{comp1} fits into a more general result:
\begin{theorem}[Anderson \cite{and}, Bando-Kasue-Nakajima \cite{bkn}, Tian \cite{Ti2}]\label{comp3}
If $(M_i, g_i)$ is a sequence of compact real $n$-dimensional Einstein manifolds with the same Einstein constant (equal to $-1, 0$ or $1$), such that there are constants $D,V,R>0$ with
$$\diam(M_i, g_i)\leq D,$$
$$\vol(M_i, g_i)\geq V,$$
$$\int_{M_i}|\Rm(g_i)|^{n/2}_{g_i} dV_{g_i}\leq R,$$
then there exist a subsequence (still denoted by $i$) and a compact Einstein orbifold $(X, g_\infty)$ with singular set $S=\{x_1,\dots,x_\ell\}$ so that
the manifolds $(M_i,g_i)$ converge to $(X,g_\infty)$ in the sense of Gromov-Hausdorff, and moreover there are diffeomorphisms with the image
$F_i:X\backslash S\to M_i$ so that 
$F_i^*g_i$ converges to $g_\infty$ in $C^\infty_{\mathrm{loc}}(X\backslash S)$.
The number $\ell$ of singular points and the orders of all the local uniformization groups at the singular points are bounded by a constant that depends only on $n,D,V,R$. 
\end{theorem}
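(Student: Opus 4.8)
The plan is to follow the now-standard Cheeger--Gromov--Anderson scheme, combining a Gromov--Hausdorff compactness argument with an $\varepsilon$-regularity theorem for Einstein metrics. First I would invoke Gromov's precompactness theorem: since the Einstein constant is $\geq -1$ we have $\Ric(g_i)\geq -(n-1)g_i$, so the diameter bound $\diam(M_i,g_i)\leq D$ produces a subsequence converging in the Gromov--Hausdorff sense to a compact metric space $X$. Along the way I would record that the hypotheses (lower Ricci bound, lower volume bound, upper diameter bound) give, via the results of Croke and Li already invoked in Lemma \ref{moserr}, uniform Sobolev and Poincar\'e inequalities for the $g_i$; these are what will make all the estimates below quantitative.

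The analytic heart is an $\varepsilon$-regularity lemma: there exist $\varepsilon_0>0$ and constants $C_k$, depending only on $n$ and the Sobolev constant, such that if $\int_{B_r(x)}|\Rm(g_i)|^{n/2}\,dV_{g_i}<\varepsilon_0$ then $\sup_{B_{r/2}(x)}|\Rm(g_i)|\leq C_0 r^{-2}$ together with $\sup_{B_{r/2}(x)}|\nabla^k\Rm(g_i)|\leq C_k r^{-2-k}$. I would prove this as in \cite{and, bkn}: for an Einstein metric $|\Rm|$ satisfies a Bochner-type elliptic inequality $\Delta|\Rm|\geq -c(|\Rm|+|\Rm|^2)$ (the Kato inequality absorbs the gradient term), and Moser iteration against the Sobolev inequality upgrades smallness of the $L^{n/2}$ norm of curvature to the pointwise bound; the derivative bounds then follow by writing the Einstein equation in harmonic coordinates and bootstrapping elliptic (indeed real-analytic) regularity. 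One then defines the singular set $S\subset X$ to be the points $x$ such that $\liminf_i\int_{B_r(x_i)}|\Rm(g_i)|^{n/2}\,dV_{g_i}\geq\varepsilon_0$ for every $r>0$ and every $x_i\to x$. A Vitali-type covering argument using the total bound $\int_{M_i}|\Rm(g_i)|^{n/2}\,dV_{g_i}\leq R$ together with the non-collapsing from the volume and Sobolev bounds shows $S$ is finite with $\#S=\ell\leq R/\varepsilon_0$, the bound depending only on $n,D,V,R$.

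Away from $S$ the conclusion is then classical. At each point of $X\setminus S$ the $\varepsilon$-regularity lemma supplies a definite scale below which the $g_i$ have uniformly bounded curvature and hence uniformly bounded harmonic radius, so by the Cheeger--Gromov convergence theorem (Arzel\`a--Ascoli in harmonic coordinates, with interior elliptic estimates coming from the Einstein equation) a further subsequence admits diffeomorphisms onto their images $F_i:X\setminus S\to M_i$ with $F_i^*g_i\to g_\infty$ in $C^\infty_{\mathrm{loc}}(X\setminus S)$, and the limit $g_\infty$ is a smooth Einstein metric on $X\setminus S$ with the same Einstein constant; the Gromov--Hausdorff convergence is upgraded accordingly.

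The final step, which I expect to be the main obstacle, is to show that $(X,g_\infty)$ is a genuine orbifold: near each $x_j\in S$ the metric $g_\infty$ must extend to a smooth Einstein metric in an orbifold chart $B/\Gamma_j$ for a finite $\Gamma_j\subset O(n)$. Here one studies a small punctured neighborhood of $x_j$; using the volume and curvature bounds one shows it has finitely many ends, each asymptotic (first in the Gromov--Hausdorff sense, then by $\varepsilon$-regularity in $C^\infty_{\mathrm{loc}}$) to a flat metric cone $C(S^{n-1}/\Gamma_j)$ over a spherical space form. A removable-singularities theorem for Einstein metrics with $\Rm\in L^{n/2}$ --- the genuinely hard technical input, descending from Uhlenbeck's work and carried out in \cite{and, bkn} --- then shows $g_\infty$ in fact extends smoothly across $x_j$ in the orbifold chart. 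Finally, Bishop--Gromov comparison together with the uniform lower volume bound forces the volume density $\lim_{r\to 0}\vol(B_r(x_j))/(\omega_n r^n)=1/|\Gamma_j|$ to be bounded below, so $|\Gamma_j|$ is bounded by a constant depending only on $n,D,V,R$, completing the proof. The two places where all the work is concentrated are thus the $\varepsilon$-regularity estimate and, above all, the removable-singularity analysis producing the smooth orbifold structure at the points of $S$.
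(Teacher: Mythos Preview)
The paper does not actually prove Theorem \ref{comp3}: it is stated with attribution to Anderson \cite{and}, Bando--Kasue--Nakajima \cite{bkn} and Tian \cite{Ti2}, and then applied to derive Theorem \ref{comp1} by verifying the hypotheses for K\"ahler--Einstein surfaces in $\mathfrak{M}_k$. So there is no ``paper's own proof'' to compare against.

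That said, your outline is a faithful and essentially correct summary of the argument in the cited references. The ingredients are right: Gromov precompactness from the lower Ricci and diameter bounds; the $\varepsilon$-regularity estimate via Moser iteration on the Bochner-type inequality for $|\Rm|$ (valid because for an Einstein metric the Laplacian of the curvature tensor involves only quadratic curvature terms); the covering argument bounding $\ell$ by $R/\varepsilon_0$; smooth convergence away from $S$ via harmonic-coordinate compactness; the tangent-cone analysis identifying each end near $x_j$ with a flat cone $C(S^{n-1}/\Gamma_j)$; and the removable-singularities step, which you correctly flag as the deepest part. The bound on $|\Gamma_j|$ via Bishop--Gromov and the volume lower bound is also right. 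One small refinement: in the $\varepsilon$-regularity step the inequality you need is $\Delta|\Rm|\geq -c\,|\Rm|^2$ (the linear term $-c|\Rm|$ is not present since the Ricci tensor is a constant multiple of the metric and contributes nothing extra), which is precisely what makes the $L^{n/2}$ smallness condition the natural threshold for Moser iteration.
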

This theorem can be used directly to prove theorem \ref{comp1}, since we have already remarked that for the K\"ahler-Einstein surfaces $(M_i,\omega_i)$ in $\mathfrak{M}_k$ we have
$$\vol(M_i,\omega_i)=9-k,$$
$$\diam(M_i,\omega_i)\leq \sqrt{3}\pi,$$
and we also have the well-known formula
$$\int_{M_i}|\Rm(\omega_i)|^2_{\omega_i} \omega_i^2=\int_{M_i}|\Ric(\omega_i)|^2_{\omega_i} \omega_i^2+c_2(M_i)=2(9-k)+3+k.$$
We can thus apply theorem \ref{comp3}, and it is also clear that the limit orbifold $(X,g_\infty)$ is a K\"ahler orbifold. The fact that the bound on the number of singular points and on the orders of the uniformization groups depends only on $k$ is then used in proposition \ref{comp2} to show that $m_0=m$ depends only on $k$.

This completes the outline of the proof of the partial $C^0$ estimate \eqref{partialc0}. 

\begin{remark} To imitate this proof in higher dimension, one would need a bound like 
\begin{equation}\label{stronger}
\int_{M_i}|\Rm(\omega_i)|^n_{\omega_i} \omega_i^n\leq C,
\end{equation}
but this does not follow from the K\"ahler-Einstein condition (unless $n=2$),
and it is in fact much stronger than the known bound
$$\int_{M_i}|\Rm(\omega_i)|^2_{\omega_i} \omega_i^n\leq C.$$
In fact, Tian \cite{Ti3} proves that if one assumes \eqref{stronger} and $n\geq 3$ (and the K\"ahler-Einstein constant is $+1$), then the limit orbifold $(X,g_\infty)$ is in fact a smooth manifold (see also \cite[p.201]{Jo} for a sketch of another proof of this fact using algebraic geometric ingredients), although the convergence of $M_i$ to $X$ still happens only away from a finite number of points.
\end{remark}

\begin{remark} 
One can define a notion of partial $C^0$ estimate also for the K\"ahler-Ricci flow on a Fano manifold $M$, by requiring that the metrics $\omega_t$ along the flow ($t\geq 0$) satisfy
$$\inf_{t\geq 0}\inf_M\rho_m(\omega_t)\geq c>0,$$
for some fixed $m,c>0$. One can then easily show (as in proposition \ref{partc0}, using that the Sobolev constant of $\omega_t$ is uniformly bounded)
that a partial $C^0$ estimate implies an estimate of the form \eqref{partiall}. In general it is unknown whether such a partial $C^0$ estimate always holds, but it is rather easy to see that it holds if the sectional curvature remains bounded along the flow (see e.g. \cite{To}). According to Chen-Wang \cite{CW} such a partial $C^0$ estimate holds for the K\"ahler-Ricci flow on Fano surfaces.
\end{remark}
\setcounter{equation}{0}
\section{Step 2 - Semicontinuity of Complex Singularity Exponents}
In this section we will prove the second step in the proof of the main theorem, namely that if a partial $C^0$ estimate holds and if for any $M$ in $\mathfrak{M}_k$ we have $\alpha_{m_0,1}(M)>2/3$ then the estimate \eqref{supest} holds. For simplicity, we will write $m=m_0$.

For this we will need the following result, which is proved in the appendix of \cite{Ti2}. For different proofs of this and more general results see Phong-Sturm \cite{PS} and Demailly-Koll\'ar \cite{DK}. 

For each $i$ let $S_i$ be a global holomorphic section of $K_{M_i}^{-m}$. Recall that the complex surfaces $(M_i,\ti{\omega}_i)$ converge smoothly 
to the complex surface $(M_\infty,\ti{\omega}_\infty)$. We will assume that the sections $S_i$ converge smoothly to a section $S_\infty$ of $K_{M_\infty}^{-m}$, which is necessarily holomorphic, and which we assume is not identically zero.
\begin{theorem}[Semicontinuity of complex singularity exponents \cite{Ti3, PS, DK}]\label{semic}
In this case if $\beta>0$ is such that
$$\int_{M_\infty}|S_\infty|^{-\beta}_{\ti{h}_\infty^m}\ti{\omega}_\infty^2<\infty,$$
then for any $0<\alpha<\beta$ we have
\begin{equation}\label{semicc}
\lim_{i\to\infty}\int_{M_i}|S_i|^{-\alpha}_{\ti{h}_i^m}\ti{\omega}_i^2=
\int_{M_\infty}|S_\infty|^{-\alpha}_{\ti{h}_\infty^m}\ti{\omega}_\infty^2<\infty.
\end{equation}
\end{theorem}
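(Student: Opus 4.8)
The plan is to isolate the easy inequality with Fatou, reduce the remaining (hard) inequality to one $i$-uniform bound, and reduce that bound to the effective semicontinuity of the complex singularity exponent of a holomorphic germ, which is where all the content sits.

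Since $(M_i,J_i,\ti{\omega}_i)$ and $S_i$ converge smoothly to $(M_\infty,J_\infty,\ti{\omega}_\infty)$ and $S_\infty$ on the common underlying smooth manifold $M$, the density $|S_i|^{-\alpha}_{\ti{h}_i^m}(\ti{\omega}_i^2/\ti{\omega}_\infty^2)$ converges pointwise to $|S_\infty|^{-\alpha}_{\ti{h}_\infty^m}$ off the compact null set $Z_\infty=\{S_\infty=0\}$, and $\int_{M_\infty}|S_\infty|^{-\alpha}_{\ti{h}_\infty^m}\ti{\omega}_\infty^2<\infty$ because $\alpha<\beta$ and $|S_\infty|_{\ti{h}_\infty^m}$ is bounded; so Fatou (applied against the fixed measure $\ti{\omega}_\infty^2$) gives $\liminf_i\int_{M_i}|S_i|^{-\alpha}_{\ti{h}_i^m}\ti{\omega}_i^2\geq\int_{M_\infty}|S_\infty|^{-\alpha}_{\ti{h}_\infty^m}\ti{\omega}_\infty^2$. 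It remains to bound the $\limsup$, and for this it suffices to prove the uniform estimate \textbf{($\star$)}: for every $\alpha'\in(\alpha,\beta)$ there are $C$ and $i_0$ with $\int_{M_i}|S_i|^{-\alpha'}_{\ti{h}_i^m}\ti{\omega}_i^2\leq C$ for all $i\geq i_0$. Indeed, granting ($\star$), Hölder's inequality with exponents $\alpha'/\alpha$ and its conjugate gives $\int_E|S_i|^{-\alpha}_{\ti{h}_i^m}\ti{\omega}_i^2\leq C^{\alpha/\alpha'}\vol_{\ti{\omega}_i}(E)^{1-\alpha/\alpha'}$ for any Borel set $E$; since the measures $\ti{\omega}_i^2$ are uniformly comparable and $Z_\infty$ is a compact null set, one picks a small neighbourhood $U$ of $Z_\infty$ on which $\int_U|S_i|^{-\alpha}_{\ti{h}_i^m}\ti{\omega}_i^2$ is uniformly small, while on $M\setminus U$ the smooth convergence and $|S_\infty|_{\ti{h}_\infty^m}\geq c_U>0$ force $\int_{M\setminus U}|S_i|^{-\alpha}_{\ti{h}_i^m}\ti{\omega}_i^2\to\int_{M\setminus U}|S_\infty|^{-\alpha}_{\ti{h}_\infty^m}\ti{\omega}_\infty^2$; an $\epsilon/3$ argument then finishes the proof of the theorem.

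To establish ($\star$) I would localize. Cover $M_\infty$ by finitely many coordinate balls trivializing the line bundles $K_{M_i}^{-m}$, so that on each ball $|S_i|^2_{\ti{h}_i^m}=|f_i|^2e^{-\psi_i}$ with $\psi_i\to\psi_\infty$ smoothly (hence $e^{-\psi_i}$ uniformly bounded above and below), $f_i$ holomorphic for $J_i$ and $f_i\to f_\infty$ smoothly, and $\ti{\omega}_i^2$ uniformly comparable to Lebesgue measure. On a ball $B$ with $\overline{B}\cap Z_\infty=\emptyset$ the $f_i$ are uniformly bounded below for large $i$ and nothing is needed. The whole difficulty is local near $Z_\infty$: for a ball $B$ centred at a point of $Z_\infty$, knowing only that $\int_B|f_\infty|^{-\beta}\,dV<\infty$, one must produce $B'\Subset B$, $C$ and $i_0$ with $\int_{B'}|f_i|^{-\alpha'}\,dV\leq C$ for $i\geq i_0$ — precisely the quantitative statement that the integrability exponent of a holomorphic germ cannot drop in a limit.

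I would prove this local estimate by the Hörmander / Ohsawa--Takegoshi $L^2$ method underlying the Demailly--Kollár and Phong--Sturm theorems, because it is robust enough to survive the varying complex structure: integrability of $|f_\infty|^{-\beta}$ on $B$ allows one, for $\alpha'<\beta$, to solve a $\db_{J_i}$-equation whose solution is a holomorphic function witnessing integrability of $|f_i|^{-\alpha'}$ on $B'$, with constants governed by the curvature of the singular weight $\alpha'\log|f_i|$ and by the operator $\db_{J_i}$; since $J_i\to J_\infty$ and $\ti{\omega}_i\to\ti{\omega}_\infty$ in $C^\infty$, these constants are uniform in $i$, giving ($\star$). (Alternatively one could resolve $Z_\infty$ for $J_\infty$ and try to transport the normal-crossings estimate, but then the slightly perturbed zero locus of $f_i\circ\pi$ would have to be straightened into normal-crossings form for the pulled-back almost complex structure $\pi^*J_i$, which is the same obstacle wearing a different hat.) The main obstacle is exactly this local estimate near $Z_\infty$, and the essential subtlety in it is that $J_i$ moves: the estimate is plainly false for an arbitrary $C^\infty$-convergent sequence of smooth functions, it uses holomorphicity in an essential way, and the hypotheses supply holomorphicity only for the drifting structures $J_i$ — it is the $C^\infty$ convergence of $J_i$, $S_i$ and $\ti{\omega}_i$ built into the setup that makes the $L^2$ constants $i$-independent and lets the argument go through.
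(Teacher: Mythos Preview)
The paper does not prove this theorem; it states it and cites the appendix of Tian's original paper \cite{Ti2} together with Phong--Sturm \cite{PS} and Demailly--Koll\'ar \cite{DK} for proofs. So there is no in-paper argument to compare against. Your reduction is sound: the Fatou step gives the lower bound on the $\liminf$; the H\"older/uniform-integrability reduction of the upper bound to the uniform estimate ($\star$) is correct and is the standard way to upgrade a uniform higher-exponent bound to convergence; and you have correctly isolated the real content as the local semicontinuity estimate near $Z_\infty$.

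Two remarks. First, your description of the $L^2$ step (``solve a $\db_{J_i}$-equation whose solution witnesses integrability of $|f_i|^{-\alpha'}$'') is more a gesture toward the Demailly--Koll\'ar machinery than an argument; the actual Ohsawa--Takegoshi route is substantial, and Tian's original proof in the appendix of \cite{Ti2} proceeds quite differently, by a direct analysis of the divisor and its resolution. Second, the drifting complex structure, which you flag as the essential subtlety, can be removed at the outset of the local step rather than carried through the $L^2$ estimates: since each $J_i$ is integrable and $J_i\to J_\infty$ in $C^\infty$, one may choose local $J_i$-holomorphic coordinate charts converging smoothly to a $J_\infty$-holomorphic chart (Newlander--Nirenberg with smooth dependence on the structure, or simply elliptic regularity for the Cauchy--Riemann system with smoothly varying coefficients). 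Composing with the resulting diffeomorphisms turns the $f_i$ into a sequence of $J_\infty$-holomorphic functions converging in $C^\infty$ to $f_\infty$, and then the fixed-complex-structure semicontinuity theorems of \cite{PS} or \cite{DK} apply verbatim to give ($\star$). This also rescues the resolution-of-singularities alternative you dismissed: after straightening $J_i$ locally, the pullback to the log resolution of $f_\infty$ is genuinely holomorphic for the standard structure, and the normal-crossings computation goes through.
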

Using this, we can easily finish the proof of Step 2.
\begin{proof}[Proof of Step 2]
Consider the sections $\ti{S}^i_j$ given by the partial $C^0$ estimate (Step 1). Since they are orthonormal, the $C^0$ norm of $|\ti{S}^i_j|^2_{\ti{h}_i^m}$ is bounded (see \eqref{c0norm}). In local homorphic coordinates, the sections $\ti{S}^i_j$ are represented by holomorphic functions, which are uniformly bounded in $L^\infty$ (since the metrics $\ti{h}_i^m$ are bounded). Cauchy's 
integral formula shows that locally we have uniform bounds on all the derivatives of $\ti{S}^i_j$, and so a subsequence of the sections $\ti{S}^i_j$ converges smoothly to a basis of sections $\{\ti{S}^\infty_j\}$ of $K_{M_\infty}^{-m}$, orthonormal with respect to the $L^2$ inner product defined using $\ti{h}_\infty^m$ and $\ti{\omega}_\infty^2$.
For any $\alpha>0$ we compute, using the partial $C^0$ estimate in the form \eqref{partiall}
and the fact that $\lambda^i_{N_m}=1,$
\[\begin{split}\int_{M_i}e^{-\alpha(\vp_i-\sup_{M_i}\vp_i)}\ti{\omega}_i^2&\leq
C\int_{M_i}\left( \sum_{j=1}^{N_m} |\lambda^i_j|^2 |\ti{S}^i_j|^2_{\ti{h}_i^m}\right)^{-\frac{\alpha}{m}}\ti{\omega}_i^2\\
&\leq C\int_{M_i}|\ti{S}^i_{N_m}|^{-\frac{2\alpha}{m}}_{\ti{h}_i^m}\ti{\omega}_i^2.\end{split}\]
If we now pick $\alpha<\alpha_{m,1}(M_\infty)$, then by definition we have
$$\int_{M_\infty} |\ti{S}^\infty_{N_m}|^{-\frac{2\alpha}{m}}_{\ti{h}_\infty^m}\ti{\omega}_\infty^2\leq C,$$ and so using Theorem \ref{semic} in the form \eqref{semicc} we get
$$\int_{M_i}e^{-\alpha(\vp_i-\sup_{M_i}\vp_i)}\ti{\omega}_i^2\leq C\int_{M_\infty} |\ti{S}^\infty_{N_m}|^{-\frac{2\alpha}{m}}_{\ti{h}_\infty^m}\ti{\omega}_\infty^2\leq C.$$
Using the complex Monge-Amp\`ere equation \eqref{ma} we get
$$\int_{M_i}e^{-\alpha(\vp_i-\sup_{M_i}\vp_i)}e^{\vp_i-f_i}\omega_i^2\leq C.$$
Since the functions $f_i$ are uniformly bounded (they converge smoothly to $f_\infty$) this implies that
$$\int_{M_i}e^{(1-\alpha)\vp_i+\alpha\sup_{M_i}\vp_i}\omega_i^2\leq C,$$
and applying Jensen's inequality we get
$$\alpha \sup_{M_i}\vp_i +\frac{1-\alpha}{V}\int_{M_i}\vp_i \omega_i^2\leq C,$$
and rearranging
\begin{equation}\label{crucial}
\sup_{M_i}\vp_i\leq \frac{1-\alpha}{\alpha V}\int_{M_i}(-\vp_i)\omega_i^2+C.
\end{equation}
We combine this with the Harnack inequality \eqref{harn1} to get
$$\sup_{M_i}\vp_i\leq -\frac{1-\alpha}{\alpha}\inf_{M_i}\vp_i+C
\leq \frac{2(1-\alpha)}{\alpha}\sup_{M_i}\vp_i+C.$$
All this works as long as $\alpha<\alpha_{m,1}(M_\infty)$. But by assumption this is strictly larger than $2/3$, and so we can choose $\alpha>2/3$ as well. In this case we have that
$$\frac{2(1-\alpha)}{\alpha}<1,$$
and so we immediately get the estimate \eqref{supest}.
\end{proof}
For later use, we collect here what we just proved in \eqref{crucial}
\begin{lemma}\label{good1}
If a partial $C^0$ estimate holds, then for any $0<\alpha<\alpha_{m,1}(M_\infty)$ there is a constant $C>0$ so that for all $i$ we have
\begin{equation}\label{good2}
\sup_{M_i}\vp_i\leq \frac{1-\alpha}{\alpha V}\int_{M_i}(-\vp_i)\omega_i^2+C.
\end{equation}
\end{lemma}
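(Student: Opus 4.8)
The plan is to extract the lemma directly from the chain of estimates already carried out in the proof of Step~2; indeed \eqref{good2} is just a restatement of \eqref{crucial}, so the task is to reproduce that derivation in a self-contained way. First I would invoke the partial $C^0$ estimate in the form \eqref{partiall}: there are bases $\{\ti{S}^i_j\}$ of $H^0(K_{M_i}^{-m})$, orthonormal for $(\ti{h}_i^m,\ti{\omega}_i^2)$, and numbers $0<\lambda^i_1\leq\cdots\leq\lambda^i_{N_m}=1$ such that $\vp_i-\sup_{M_i}\vp_i$ differs from $\frac1m\log\sum_j|\lambda^i_j|^2|\ti{S}^i_j|^2_{\ti{h}_i^m}$ by a uniformly bounded amount. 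Exponentiating, and discarding all but the last term of the sum (legitimate because $\lambda^i_{N_m}=1$ and the remaining terms are nonnegative), gives
$$\int_{M_i}e^{-\alpha(\vp_i-\sup_{M_i}\vp_i)}\ti{\omega}_i^2\leq C\int_{M_i}|\ti{S}^i_{N_m}|^{-2\alpha/m}_{\ti{h}_i^m}\ti{\omega}_i^2.$$

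Next I would pass to the limit in the right-hand side. Using orthonormality and \eqref{c0norm}, the sections $\ti{S}^i_{N_m}$ have uniformly bounded $C^0$ norm; written in local holomorphic coordinates on the converging family they are uniformly bounded holomorphic functions, so Cauchy's integral formula yields uniform local bounds on all derivatives, and after passing to a subsequence $\ti{S}^i_{N_m}\to\ti{S}^\infty_{N_m}$ smoothly, where $\ti{S}^\infty_{N_m}$ is an $L^2(\ti{h}_\infty^m,\ti{\omega}_\infty^2)$-unit holomorphic section of $K_{M_\infty}^{-m}$, in particular not identically zero. Since $\alpha<\alpha_{m,1}(M_\infty)$ I can choose $\alpha<\alpha'<\alpha_{m,1}(M_\infty)$, and then by definition of $\alpha_{m,1}$ we have $\int_{M_\infty}|\ti{S}^\infty_{N_m}|^{-2\alpha'/m}_{\ti{h}_\infty^m}\ti{\omega}_\infty^2<\infty$; applying Theorem~\ref{semic} with $\beta=2\alpha'/m>2\alpha/m$ gives $\int_{M_i}|\ti{S}^i_{N_m}|^{-2\alpha/m}_{\ti{h}_i^m}\ti{\omega}_i^2\to\int_{M_\infty}|\ti{S}^\infty_{N_m}|^{-2\alpha/m}_{\ti{h}_\infty^m}\ti{\omega}_\infty^2<\infty$, hence a uniform bound $\int_{M_i}e^{-\alpha(\vp_i-\sup_{M_i}\vp_i)}\ti{\omega}_i^2\leq C$.

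Finally I would translate this into \eqref{good2}. Substituting $\ti{\omega}_i^2=e^{\vp_i-f_i}\omega_i^2$ from the Monge--Amp\`ere equation \eqref{ma}, and using that the Ricci potentials $f_i$ are uniformly bounded, turns the last bound into $\int_{M_i}e^{(1-\alpha)\vp_i+\alpha\sup_{M_i}\vp_i}\omega_i^2\leq C$. Jensen's inequality applied to the exponential function on the probability space $(M_i,\omega_i^2/V)$ then gives $\alpha\sup_{M_i}\vp_i+\frac{1-\alpha}{V}\int_{M_i}\vp_i\,\omega_i^2\leq C$, and rearranging produces exactly \eqref{good2}.

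The steps are individually elementary, so the only point requiring care --- and the one I would treat as the main (minor) obstacle --- is the passage to the limit: one must pick $\alpha$ strictly inside $(0,\alpha_{m,1}(M_\infty))$ to leave room to apply Theorem~\ref{semic} (whose conclusion needs the exponent strictly below the limiting integrability threshold), and one must know the limit section $\ti{S}^\infty_{N_m}$ is not identically zero, which is guaranteed by its being an $L^2$-unit section. Everything else --- smooth convergence of the $\ti{S}^i_j$, the Monge--Amp\`ere substitution, Jensen's inequality --- is routine.
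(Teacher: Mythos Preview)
Your proposal is correct and follows essentially the same approach as the paper: indeed the paper explicitly states that Lemma~\ref{good1} ``collects here what we just proved in \eqref{crucial}'', and your argument reproduces that derivation step by step (partial $C^0$ estimate $\Rightarrow$ drop all but the top section $\Rightarrow$ semicontinuity Theorem~\ref{semic} $\Rightarrow$ Monge--Amp\`ere substitution $\Rightarrow$ Jensen). Your only addition is making explicit the choice of an intermediate $\alpha'$ with $\alpha<\alpha'<\alpha_{m,1}(M_\infty)$ to leave room in the hypothesis of Theorem~\ref{semic}, which the paper glosses over; this is a fair point of care but does not change the argument.
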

\setcounter{equation}{0}
\section{Step 3 - Improved Harnack Inequality}
In this section we will prove the third and last step in the proof of the main theorem, namely that if a partial $C^0$ estimate holds and if for any $M$ in $\mathfrak{M}_k$ we have $\alpha_{m_0,1}(M)=2/3$ and $\alpha_{m_0,2}(M)>2/3$, then the estimate \eqref{supest} holds. This will complete the proof of the main theorem \ref{main}. Again, we will write $m=m_0$.

The main ingredient is the following improved Harnack inequality:
\begin{proposition}\label{improve}
If a partial $C^0$ estimate holds and if for any $M$ in $\mathfrak{M}_k$ we have $\alpha_{m,2}(M)>2/3$, then there exist $\ve, C>0$ so that for all $i$ we have
\begin{equation}\label{improved}
\frac{1}{V}\int_{M_i}(-\vp_i)\omega_i^2\leq (2-\ve)\sup_{M_i}\vp_i+C.
\end{equation}
\end{proposition}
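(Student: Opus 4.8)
The plan is to run the same argument as in Step 2 (Lemma \ref{good1}), but now squeezing out the extra factor $\ve$ by using \emph{two} sections instead of one, and exploiting that $\alpha_{m,2}(M_\infty)>2/3$ strictly even though $\alpha_{m,1}(M_\infty)=2/3$ only. Concretely, I would start from the partial $C^0$ estimate in the form \eqref{partiall}, with the orthonormal bases $\{\ti S^i_j\}$ and coefficients $0<\lambda^i_1\leq\dots\leq\lambda^i_{N_m}=1$. As in Step 2, a subsequence of the $\ti S^i_j$ converges smoothly to an $L^2$-orthonormal basis $\{\ti S^\infty_j\}$ of $H^0(K_{M_\infty}^{-m})$. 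The key point is that $\lambda^i_{N_m}=1$ always, but in general the \emph{second-largest} coefficient $\lambda^i_{N_m-1}$ need not tend to zero; I would split into two cases according to whether $\lambda^i_{N_m-1}$ stays bounded below or not.

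In the good case, where $\lambda^i_{N_m-1}\geq c_0>0$ along the subsequence, I would bound $\sum_j|\lambda^i_j|^2|\ti S^i_j|^2_{\ti h^m_i}\geq c_0^2\big(|\ti S^i_{N_m-1}|^2_{\ti h^m_i}+|\ti S^i_{N_m}|^2_{\ti h^m_i}\big)$, so that for $\alpha>0$
\[
\int_{M_i}e^{-\alpha(\vp_i-\sup_{M_i}\vp_i)}\ti\omega_i^2\leq C\int_{M_i}\big(|\ti S^i_{N_m-1}|^2_{\ti h^m_i}+|\ti S^i_{N_m}|^2_{\ti h^m_i}\big)^{-\alpha/m}\ti\omega_i^2.
\]
Since $\{\ti S^\infty_{N_m-1},\ti S^\infty_{N_m}\}$ is an orthonormal pair and $\alpha_{m,2}(M_\infty)>2/3$, for $\alpha$ slightly bigger than $2/3$ the limit integral over $M_\infty$ is finite; then Theorem \ref{semic} transfers finiteness back to $M_i$ uniformly. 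Feeding this through the complex Monge–Amp\`ere equation \eqref{ma} and Jensen's inequality exactly as in the derivation of \eqref{crucial} yields
\[
\sup_{M_i}\vp_i\leq\frac{1-\alpha}{\alpha V}\int_{M_i}(-\vp_i)\omega_i^2+C
\]
with a fixed $\alpha>2/3$, which after rearranging is precisely \eqref{improved} with $\ve=2-\frac{1-\alpha}{\alpha}\cdot\frac{\alpha}{?}$—more simply, $\tfrac{1-\alpha}{\alpha}<\tfrac12$, so combined with the structure of \eqref{good2} it gives the improvement. (I would in fact phrase the final rearrangement to produce directly the coefficient $2-\ve$ in front of $\sup\vp_i$, using the Harnack inequality \eqref{harn1} only insofar as needed.)

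The bad case is where $\lambda^i_{N_m-1}\to 0$; here the "two-section" trick degenerates to a single section, and one only has the weaker bound coming from $\alpha_{m,1}(M_\infty)=2/3$, which is exactly the borderline that does \emph{not} by itself give room for an $\ve$. I expect this to be the main obstacle, and the resolution should be to look at the sub-basis $\{\ti S^\infty_{N_m}\}$ together with the limit of the zero divisors: when $\lambda^i_{N_m-1}\to0$, all the mass of the algebraic potential concentrates on the single section $\ti S^\infty_{N_m}$, whose zero locus is a divisor $D$ in $|K_{M_\infty}^{-m}|$, and the statement $\alpha_{m,1}(M_\infty)=2/3$ means the integrability of $|\ti S^\infty_{N_m}|^{-2\alpha/m}$ just barely fails at $\alpha=2/3$, i.e.\ the log canonical threshold of $\tfrac1m D$ equals $\tfrac23$. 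One then has to use that even so, the function $\vp_i$ cannot blow up \emph{uniformly} along $D$ because of an improved lower-order estimate on $\int(-\vp_i)\omega_i^2$ — and this is where the genuinely new input of Proposition \ref{improve}, beyond Step 2, must enter; I would model it on Tian's original argument and on the refined integral estimates of Phong–Sturm \cite{PS}, showing that a limiting profile saturating \eqref{harn1} with constant exactly $2$ would force a holomorphic vector field on $M_\infty$, contradicting $\mathfrak h(M_\infty)=0$ for $M_\infty\in\mathfrak M_k$, $5\leq k\leq 8$.
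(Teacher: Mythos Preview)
Your case split on whether $\lambda^i_{N_m-1}$ stays bounded below is exactly right, and your treatment of the good case matches the paper's: when $\lambda^i_{N_m-1}\geq c_0>0$ the two-section bound reduces \eqref{good} to the form of \eqref{crucial} with some $\alpha>2/3$, from which \eqref{supest} follows directly as in Step~2, and then \eqref{improved} is automatic.

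However, your treatment of the bad case $\lambda^i_{N_m-1}\to 0$ has a genuine gap. The argument you sketch (limiting profiles saturating \eqref{harn1}, a contradiction via holomorphic vector fields, Phong--Sturm refinements) is not how the paper proceeds, and as written it is not a proof: the absence of holomorphic vector fields has already been fully exploited in deriving \eqref{harn2}, and there is no further soft contradiction to be extracted from it. The actual mechanism is quite different and quantitative. It uses the \emph{Dirichlet energy} term hidden in the sharper Harnack inequality \eqref{harn2},
\[
\frac{1}{V}\int_{M_i}(-\vp_i)\omega_i^2\leq 2\sup_{M_i}\vp_i-\frac{1}{V}\int_{M_i}\mn\de\vp_i\wedge\db\vp_i\wedge\ti\omega_i.
\]
The partial $C^0$ estimate lets one replace $\vp_i$ by the algebraic potential $\psi_i=\frac{1}{m}\log\sum_j|\lambda^i_j|^2|\ti S^i_j|^2_{\ti h^m_i}$ in this gradient term, up to a bounded error. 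The key new lemma is then an explicit \emph{lower} bound
\[
\frac{1}{V}\int_{M_i}\mn\de\psi_i\wedge\db\psi_i\wedge\ti\omega_i\geq -\frac{2\delta}{m}\log\lambda^i_{N_m-1}-C
\]
for some uniform $\delta\in(0,1)$, proved by a direct computation on an annulus around the zero locus of $\ti S^i_{N_m}$: as $\lambda^i_{N_m-1}\to 0$ the potential $\psi_i$ develops a logarithmic singularity along $\{\ti S^i_{N_m}=0\}$, and its Dirichlet integral diverges like $-\log\lambda^i_{N_m-1}$. Meanwhile, your two-section inequality still holds but acquires the extra term $-\frac{2}{m}\log\lambda^i_{N_m-1}$ on the right. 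The gradient lower bound compensates for a fraction $\delta$ of this extra term, and combining the two yields \eqref{improved} with $\ve=(3\alpha-2)/(\alpha/\delta+\alpha-1)>0$. This is the missing idea; nothing in your bad-case sketch points toward it.
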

As an aside, we remark that this indeed improves on the Harnack inequality \eqref{harn1} since we can use the Green formula for the K\"ahler-Einstein metrics $\omega_i$ (as in section 2) to get
\begin{equation}\label{har}
-\inf_{M_i}\vp_i\leq \frac{1}{V}\int_{M_i}(-\vp_i)\omega_i^2+C,
\end{equation}
and so we get 
$$-\inf_{M_i}\vp_i\leq(2-\ve)\sup_{M_i}\vp_i+C,$$
which improves \eqref{harn1}. However, we will only make use of the weaker estimate \eqref{improved}.

If we assume proposition \eqref{improve}, we can complete the proof of Step 3 as follows. If we go back to \eqref{crucial}, or Lemma \ref{good1}, we see that it holds for any 
$0<\alpha<\alpha_{m,1}(M_\infty)=2/3$. Combining \eqref{crucial} with \eqref{improved} we get
$$\sup_{M_i}\vp_i\leq \frac{(1-\alpha)(2-\ve)}{\alpha}\sup_{M_i}\vp_i+C,$$
and if we choose $\alpha$ so that
$$\frac{2-\ve}{3-\ve}<\alpha<\frac{2}{3},$$
we see that
$$\frac{(1-\alpha)(2-\ve)}{\alpha}<1,$$
which immediately implies the estimate \eqref{supest}.

To prove the main theorem it only remains to prove proposition \ref{improve}.
First, we note that the Harnack inequality \eqref{harn1} actually follows from 
\eqref{har} together with the following estimate proved by Tian \cite{Ti2} (again using the fact that there are no nonzero holomorphic vector fields)
\begin{equation}\label{harn2}
\frac{1}{V}\int_{M_i}(-\vp_i)\omega_i^2\leq 2\sup_{M_i}\vp_i-\frac{1}{V}\int_{M_i}\mn\de\vp_i\wedge\db\vp_i\wedge\ti{\omega}_i,
\end{equation}
which improves on \eqref{improved1} since the last term is equal to minus the integral of $|\de\vp_i|^2_{\ti{\omega}_i}$, and so it is nonpositive. Since we will need \eqref{harn2}, let us give an idea of how it is proved. First, using the fact that $M$ has no holomorphic vector fields, one can follow Bando-Mabuchi \cite{BM} and solve Aubin's continuity method backwards, that is for any $0\leq t\leq 1$ one can solve
\begin{equation}\label{aubin}
(\ti{\omega}_i+\mn\de\db\vp_i(t))^2=e^{f_i-t\vp_i(t)}\ti{\omega}_i^2,
\end{equation}
where $\vp_i(t)$ are K\"ahler potentials for $\ti{\omega}_i$, and $\vp_i(1)=\vp_i$. Since from now on all the computations are formal, we will drop the indices $i$ and just call $\ti{\omega}_i=\omega$ and $\ti{\omega}_i+\mn\de\db\vp_i=\omega_\vp$. We also recall the definition of two well-known functionals in dimension $2$
$$I_\omega(\vp)=\frac{1}{V}\int_M \vp(\omega^2-\omega_\vp^2)=\frac{1}{V}\int_M\mn\de\vp\wedge\db\vp\wedge\omega_\vp+\frac{1}{V}\int_M \mn\de\vp\wedge\db\vp\wedge\omega,$$
$$J_\omega(\vp)=\frac{1}{3V}\int_M \mn\de\vp\wedge\db\vp\wedge\omega_\vp+\frac{2}{3V}\int_M \mn\de\vp\wedge\db\vp\wedge\omega,$$
where here $\vp$ is any K\"ahler potential. It is easy to check that for any K\"ahler potential $\vp$ one has
$$I_\omega(\vp)-J_\omega(\vp)\geq 0.$$
If now $\vp$ is the potential $\vp_i$, then using \eqref{aubin} Tian proves (proposition 2.3 in \cite{Ti1}) that
$$\frac{1}{V}\int_M (-\vp)\omega_\vp^2=I_\omega(\vp)-J_\omega(\vp)-\int_0^1 
(I_\omega(\vp(t))-J_\omega(\vp(t)))dt\leq I_\omega(\vp)-J_\omega(\vp).$$
But using the definitions of $I_\omega$ and $J_\omega$ and then integrating by parts, we can write 
$I_\omega(\vp)-J_\omega(\vp)$ as
\[\begin{split} \frac{2}{3V}\int_M\mn\de\vp\wedge\db\vp\wedge(\omega+\omega_\vp)-
\frac{1}{3V}\int_M \mn\de\vp\wedge\db\vp\wedge\omega\\
=\frac{2}{3V}\int_M \vp\omega^2-\frac{2}{3V}\int_M\vp\omega_\vp^2-\frac{1}{3V}\int_M \mn\de\vp\wedge\db\vp\wedge\omega.
\end{split}\]
Putting together the last two equations one gets
\[\begin{split}\frac{1}{3V}\int_M (-\vp)\omega_\vp^2&=\frac{1}{V}\int_M (-\vp)\omega_\vp^2-
\frac{2}{3V}\int_M (-\vp)\omega_\vp^2\\
&\leq \frac{2}{3V}\int_M \vp\omega^2-\frac{1}{3V}\int_M \mn\de\vp\wedge\db\vp\wedge\omega,
\end{split}\]
which multiplied by $3$ gives
\[\begin{split}\frac{1}{V}\int_M (-\vp)\omega_\vp^2&\leq \frac{2}{V}\int_M \vp\omega^2-\frac{1}{V}\int_M \mn\de\vp\wedge\db\vp\wedge\omega\\
&\leq 2\sup_M\vp-\frac{1}{V}\int_M \mn\de\vp\wedge\db\vp\wedge\omega,
\end{split}\]
which is exactly \eqref{harn2} (after reinstating the previous notations).

Before we can prove proposition \ref{improve} we need three more lemmas. For convenience, we now temporarily set 
$$\psi_i=\frac{1}{m}\log\sum_{j=1}^{N_m}|\lambda^i_j|^2 
|\ti{S}^i_j|^2_{\ti{h}_i^m},$$
the algebraic K\"ahler potential, which by the partial $C^0$ estimate \eqref{partiall} satisfies
\begin{equation}\label{harn4}
\sup_i|\vp_i-\sup_{M_i}\vp_i-\psi_i|\leq C_0.
\end{equation}
\begin{lemma} If a partial $C^0$ estimate holds, then there is a constant $C>0$ so that for all $i$ we have
\begin{equation}\label{harn3}
\frac{1}{V}\int_{M_i}(-\vp_i)\omega_i^2\leq 2\sup_{M_i}\vp_i-\frac{1}{V}\int_{M_i}\mn\de\psi_i\wedge\db\psi_i\wedge\ti{\omega}_i+C.
\end{equation}
\end{lemma}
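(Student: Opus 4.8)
The plan is to deduce \eqref{harn3} from the improved Harnack inequality \eqref{harn2}, applied with the potential $\vp_i$, by replacing the Dirichlet-type term of $\vp_i$ with the one of the algebraic potential $\psi_i$ at the cost of a uniformly bounded error. Write $B(f,g)=\int_{M_i}\mn\de f\wedge\db g\wedge\ti{\omega}_i$ for the associated bilinear form on smooth real functions; it is symmetric and positive semidefinite (indeed $B(f,f)$ is, up to a fixed positive constant, the integral of $|\de f|^2_{\ti{\omega}_i}$). By the partial $C^0$ estimate in the form \eqref{harn4} we have $\vp_i=\sup_{M_i}\vp_i+\psi_i+e_i$ with $|e_i|\leq C_0$ on $M_i$, uniformly in $i$. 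The additive constant is annihilated by $\de$ and $\db$, so bilinearity gives
$$\int_{M_i}\mn\de\vp_i\wedge\db\vp_i\wedge\ti{\omega}_i=B(\psi_i,\psi_i)+2B(\psi_i,e_i)+B(e_i,e_i)\ \geq\ B(\psi_i,\psi_i)+2B(\psi_i,e_i),$$
using $B(e_i,e_i)\geq 0$. Hence it suffices to prove the uniform bound $|B(\psi_i,e_i)|\leq C$.

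The only step that requires an idea is this bound on the cross term, which must hold even though no a priori control of $\psi_i$ or of $\de\psi_i$ is available. Integrating by parts, which is legitimate since $\ti{\omega}_i$ is closed, we move the derivatives onto $\psi_i$:
$$B(\psi_i,e_i)=-\int_{M_i}e_i\,\mn\de\db\psi_i\wedge\ti{\omega}_i=-\int_{M_i}e_i\left(\ti{\omega}_i+\mn\de\db\psi_i\right)\wedge\ti{\omega}_i+\int_{M_i}e_i\,\ti{\omega}_i^2.$$
By \eqref{algpot} the form $\ti{\omega}_i+\mn\de\db\psi_i=\frac{1}{m}\iota^*\tau^*\omega_{FS}$ is semipositive and cohomologous to $\ti{\omega}_i$, so $\int_{M_i}(\ti{\omega}_i+\mn\de\db\psi_i)\wedge\ti{\omega}_i=\int_{M_i}\ti{\omega}_i^2=V$. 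Since both $(2,2)$-forms in the last display are nonnegative and $|e_i|\leq C_0$, this yields $|B(\psi_i,e_i)|\leq 2C_0V$.

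Combining these, $\int_{M_i}\mn\de\vp_i\wedge\db\vp_i\wedge\ti{\omega}_i\geq\int_{M_i}\mn\de\psi_i\wedge\db\psi_i\wedge\ti{\omega}_i-4C_0V$, and substituting into \eqref{harn2} gives \eqref{harn3} with $C=4C_0$. I expect the main obstacle to be precisely the control of $B(\psi_i,e_i)$: a direct pointwise estimate would require a bound on $\de\psi_i$, which is unavailable, whereas after integration by parts the term $\mn\de\db\psi_i$ gets absorbed into the semipositive, cohomologically fixed form $\ti{\omega}_i+\mn\de\db\psi_i$, so that only the $C^0$ bound $|e_i|\leq C_0$ — that is, the partial $C^0$ estimate itself — is used. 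It should also be recorded that $\psi_i$ is a genuine smooth function with $\ti{\omega}_i+\mn\de\db\psi_i\geq 0$ because it is, up to an additive constant, the pullback of a local Fubini--Study potential under the Kodaira-type map of \eqref{algpot}; this holds once $K_{M_i}^{-m_0}$ is globally generated, which is guaranteed by the partial $C^0$ estimate.
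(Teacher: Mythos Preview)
Your proof is correct and follows essentially the same approach as the paper: reduce \eqref{harn3} to \eqref{harn2} by showing that the Dirichlet energies of $\vp_i$ and $\psi_i$ with respect to $\ti{\omega}_i$ differ by a uniformly bounded amount, via integration by parts, the positivity of $\ti{\omega}_i+\mn\de\db\psi_i$, and the $C^0$ bound \eqref{harn4}. The only cosmetic difference is the algebraic organization: you expand $B(\vp_i,\vp_i)=B(\psi_i,\psi_i)+2B(\psi_i,e_i)+B(e_i,e_i)$, discard the nonnegative term $B(e_i,e_i)$, and bound the cross term directly, whereas the paper writes the difference as $\int(\psi_i-\vp_i)(\Delta_{\ti{\omega}_i}\psi_i+\Delta_{\ti{\omega}_i}\vp_i)\ti{\omega}_i^2$ and uses that both $\psi_i$ and $\vp_i$ are K\"ahler potentials; both routes yield the same constant $4C_0$.
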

\begin{proof}
Integrating by parts a few times we see that
\[\begin{split}
2\int_{M_i}(\mn\de\vp_i\wedge\db\vp_i\wedge\ti{\omega}_i&-\mn\de\psi_i\wedge\db\psi_i\wedge\ti{\omega}_i)\\
&=\int_{M_i}(\psi_i\Delta_{\ti{\omega}_i}\psi_i-\vp_i\Delta_{\ti{\omega}_i}\vp_i)\ti{\omega}_i^2\\
&=\int_{M_i}(\psi_i-\vp_i)(\Delta_{\ti{\omega}_i}\psi_i+\Delta_{\ti{\omega}_i}\vp_i)\ti{\omega}_i^2\\
&=\int_{M_i}(\psi_i-\vp_i+\sup_{M_i}\vp_i+C_0)(\Delta_{\ti{\omega}_i}\psi_i+\Delta_{\ti{\omega}_i}\vp_i)\ti{\omega}_i^2,
\end{split}\]
where the constant $C_0$ is as in \eqref{harn4}. Since $\psi_i$ and $\vp_i$ are both K\"ahler potentials for $\ti{\omega}_i$ (see \eqref{algpot}), we see that
$$\Delta_{\ti{\omega}_i}\psi_i>-2,\quad \Delta_{\ti{\omega}_i}\vp_i>-2,$$
and since we also have from \eqref{harn4} that
$0\leq \psi_i-\vp_i+\sup_{M_i}\vp_i+C_0\leq 2C_0$, we immediately get
\begin{equation}\label{harn5}
2\int_{M_i}(\mn\de\vp_i\wedge\db\vp_i\wedge\ti{\omega}_i-\mn\de\psi_i\wedge\db\psi_i\wedge\ti{\omega}_i)
\geq -8C_0V.
\end{equation}
For the reverse inequality we compute
\[\begin{split}
2\int_{M_i}(\mn\de\vp_i\wedge\db\vp_i\wedge&\ti{\omega}_i-\mn\de\psi_i\wedge\db\psi_i\wedge\ti{\omega}_i)\\
&=\int_{M_i}(\psi_i\Delta_{\ti{\omega}_i}\psi_i-\vp_i\Delta_{\ti{\omega}_i}\vp_i)\ti{\omega}_i^2\\
&=\int_{M_i}(\vp_i-\psi_i)(-\Delta_{\ti{\omega}_i}\psi_i-\Delta_{\ti{\omega}_i}\vp_i)\ti{\omega}_i^2\\
&=\int_{M_i}(\vp_i-\psi_i-\sup_{M_i}\vp_i+C_0)(-\Delta_{\ti{\omega}_i}\psi_i-\Delta_{\ti{\omega}_i}\vp_i)\ti{\omega}_i^2\\
&\leq 8C_0V,
\end{split}\]
which combined with \eqref{harn5} and \eqref{harn2} gives \eqref{harn3}.
\end{proof}

Now we imitate the proof of Lemma \ref{good1} in Step 2 to get a slightly weaker result in the following way.
\begin{lemma}
If a partial $C^0$ estimate holds, then for any $0<\alpha<\alpha_{m,2}(M_\infty)$ there is a constant $C>0$ so that for all $i$ we have
\begin{equation}\label{good}
\sup_{M_i}\vp_i\leq \frac{1-\alpha}{\alpha V}\int_{M_i}(-\vp_i)\omega_i^2-\frac{2}{m}\log \lambda^i_{N_m-1}+C.
\end{equation}
\end{lemma}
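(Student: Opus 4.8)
The plan is to follow the proof of Lemma~\ref{good1} (Step~2) almost verbatim, the only change being that we keep the two \emph{largest} terms of the algebraic potential rather than just the largest one; this is exactly what forces $\alpha_{m,2}$ in place of $\alpha_{m,1}$ and produces the extra term $-\frac{2}{m}\log\lambda^i_{N_m-1}$.

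First, since $0<\lambda^i_1\leq\cdots\leq\lambda^i_{N_m-1}\leq\lambda^i_{N_m}=1$, discarding all but the top two terms and then using $\lambda^i_{N_m-1}\leq 1$ gives, pointwise on $M_i$,
\[\sum_{j=1}^{N_m}|\lambda^i_j|^2|\ti{S}^i_j|^2_{\ti{h}_i^m}\geq |\ti{S}^i_{N_m}|^2_{\ti{h}_i^m}+|\lambda^i_{N_m-1}|^2|\ti{S}^i_{N_m-1}|^2_{\ti{h}_i^m}\geq |\lambda^i_{N_m-1}|^2\Big(|\ti{S}^i_{N_m}|^2_{\ti{h}_i^m}+|\ti{S}^i_{N_m-1}|^2_{\ti{h}_i^m}\Big),\]
so that, by the partial $C^0$ estimate in the form \eqref{partiall}, for every $\alpha>0$
\[\int_{M_i}e^{-\alpha(\vp_i-\sup_{M_i}\vp_i)}\ti{\omega}_i^2\leq C\,|\lambda^i_{N_m-1}|^{-\frac{2\alpha}{m}}\int_{M_i}\Big(|\ti{S}^i_{N_m}|^2_{\ti{h}_i^m}+|\ti{S}^i_{N_m-1}|^2_{\ti{h}_i^m}\Big)^{-\frac{\alpha}{m}}\ti{\omega}_i^2.\]
Recall from the proof of Step~2 that, after passing to a subsequence, the $\ti{S}^i_j$ converge smoothly to sections $\ti{S}^\infty_j$ of $K_{M_\infty}^{-m}$ that are orthonormal for $\ti{h}_\infty^m$ and $\ti{\omega}_\infty^2$; in particular the pair $(\ti{S}^i_{N_m-1},\ti{S}^i_{N_m})$ converges to an orthonormal pair on $M_\infty$. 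If $0<\alpha<\alpha_{m,2}(M_\infty)$, the definition of $\alpha_{m,2}$ bounds $\int_{M_\infty}(|\ti{S}^\infty_{N_m}|^2_{\ti{h}_\infty^m}+|\ti{S}^\infty_{N_m-1}|^2_{\ti{h}_\infty^m})^{-\alpha/m}\ti{\omega}_\infty^2$, and applying Theorem~\ref{semic} --- in the version for a convergent pair of sections, equivalently for the quasi-plurisubharmonic weights $\log(|\ti{S}^i_{N_m}|^2_{\ti{h}_i^m}+|\ti{S}^i_{N_m-1}|^2_{\ti{h}_i^m})$, which is covered by the same methods in \cite{Ti3, PS, DK} --- we obtain $\int_{M_i}(|\ti{S}^i_{N_m}|^2_{\ti{h}_i^m}+|\ti{S}^i_{N_m-1}|^2_{\ti{h}_i^m})^{-\alpha/m}\ti{\omega}_i^2\leq C$ for all large $i$. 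Hence $\int_{M_i}e^{-\alpha(\vp_i-\sup_{M_i}\vp_i)}\ti{\omega}_i^2\leq C|\lambda^i_{N_m-1}|^{-2\alpha/m}$.

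From here one simply repeats the end of Step~2: substitute $\ti{\omega}_i^2=e^{\vp_i-f_i}\omega_i^2$ from \eqref{ma} and use the uniform bound on $f_i$ to get $\int_{M_i}e^{(1-\alpha)\vp_i+\alpha\sup_{M_i}\vp_i}\omega_i^2\leq C|\lambda^i_{N_m-1}|^{-2\alpha/m}$; then apply Jensen's inequality to the probability measure $\omega_i^2/V$, take logarithms to get $\alpha\sup_{M_i}\vp_i+\frac{1-\alpha}{V}\int_{M_i}\vp_i\,\omega_i^2\leq -\frac{2\alpha}{m}\log\lambda^i_{N_m-1}+C$, and finally divide by $\alpha$ and rearrange to reach \eqref{good}.

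The only genuinely new ingredient relative to Lemma~\ref{good1} is the semicontinuity statement applied to a sum of squares $|S_1|^2+|S_2|^2$ rather than to a single section, and this is precisely why the hypothesis is phrased in terms of $\alpha_{m,2}$: in Step~3 one only knows $\alpha_{m,1}(M_\infty)=2/3$, so keeping a single top section would not permit taking $\alpha>2/3$. I expect this to be the main (and fairly mild) obstacle --- one must check that the version of Theorem~\ref{semic} being invoked genuinely applies to $\log(|S_1|^2+|S_2|^2)$, and that the limiting pair on $M_\infty$ is orthonormal and hence admissible for $\alpha_{m,2}(M_\infty)$ (both already established in Step~2 via the Cauchy estimates there). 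Everything else is identical to the argument of Step~2.
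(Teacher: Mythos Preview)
Your proof is correct and follows essentially the same route as the paper: drop to the two top terms of the algebraic potential to pull out $|\lambda^i_{N_m-1}|^{-2\alpha/m}$, invoke the definition of $\alpha_{m,2}(M_\infty)$ on the limit, use the Demailly--Koll\'ar version of semicontinuity to transfer the integral bound back to $M_i$, and then finish via the Monge--Amp\`ere equation and Jensen exactly as in Step~2. The only point you flag --- that one needs the semicontinuity theorem for $\log(|S_1|^2+|S_2|^2)$ rather than a single section --- is handled in the paper in precisely the same way, by citing \cite{DK}.
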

\begin{proof}
For any $\alpha>0$ we compute, using the partial $C^0$ estimate in the form \eqref{partiall}
and the fact that $\lambda^i_{N_m}=1,$
\[\begin{split}\int_{M_i}& e^{-\alpha(\vp_i-\sup_{M_i}\vp_i)}\ti{\omega}_i^2\leq
C\int_{M_i}\left( \sum_{j=1}^{N_m} |\lambda^i_j|^2 |\ti{S}^i_j|^2_{\ti{h}_i^m}\right)^{-\frac{\alpha}{m}}\ti{\omega}_i^2\\
&\leq C|\lambda^i_{N_m-1}|^{-\frac{2\alpha}{m}}\int_{M_i}\left(|\ti{S}^i_{N_m-1}|^2_{\ti{h}_i^m}+
|\ti{S}^i_{N_m}|^2_{\ti{h}_i^m}\right)^{-\frac{\alpha}{m}}\ti{\omega}_i^2.\end{split}\]
Recall that the sections $\ti{S}^i_j$ converge smoothly to holomorphic sections $\ti{S}^\infty_j$ on $M_\infty$, which form an orthonormal basis for the $L^2$ inner product defined using $\ti{h}_\infty^m$ and $\ti{\omega}_\infty^2$.
If we now pick $\alpha<\alpha_{m,2}(M_\infty)$, then by definition we have
$$\int_{M_\infty} \left(|\ti{S}^\infty_{N_m-1}|^2_{\ti{h}_\infty^m}+
|\ti{S}^\infty_{N_m}|^2_{\ti{h}_\infty^m}\right)^{-\frac{\alpha}{m}}\ti{\omega}_\infty^2\leq C,$$ 
and using Demailly-Koll\'ar's generalization of theorem \ref{semic} \cite{DK} we have
\[\begin{split}\lim_{i\to\infty} \int_{M_i}&\left(|\ti{S}^i_{N_m-1}|^2_{\ti{h}_i^m}+
|\ti{S}^i_{N_m}|^2_{\ti{h}_i^m}\right)^{-\frac{\alpha}{m}}\ti{\omega}_i^2\\
&=\int_{M_\infty} \left(|\ti{S}^\infty_{N_m-1}|^2_{\ti{h}_\infty^m}+
|\ti{S}^\infty_{N_m}|^2_{\ti{h}_\infty^m}\right)^{-\frac{\alpha}{m}}\ti{\omega}_\infty^2\leq C,\end{split}\]
and so we get
$$\int_{M_i}e^{-\alpha(\vp_i-\sup_{M_i}\vp_i)}\ti{\omega}_i^2\leq C|\lambda^i_{N_m-1}|^{-\frac{2\alpha}{m}}.$$
Using the complex Monge-Amp\`ere equation \eqref{ma} we get
$$\int_{M_i}e^{-\alpha(\vp_i-\sup_{M_i}\vp_i)}e^{\vp_i-f_i}\omega_i^2\leq C|\lambda^i_{N_m-1}|^{-\frac{2\alpha}{m}}.$$
The functions $f_i$ are uniformly bounded and we can apply Jensen's inequality to get
$$\alpha \sup_{M_i}\vp_i +\frac{1-\alpha}{V}\int_{M_i}\vp_i \omega_i^2\leq -\frac{2\alpha}{m}\log \lambda^i_{N_m-1}+C,$$
which is exactly \eqref{good}.
\end{proof}
Notice now that if $$\sup_i (-\log\lambda^i_{N_m-1})\leq C,$$
then from \eqref{good} we get exactly the same estimate as in \eqref{crucial}, and using the fact that 
we can choose $2/3<\alpha<\alpha_{m,2}(M_\infty)$ and immediately conclude that \eqref{supest} holds, as in Step 2. So, up to subsequence, we are free to assume that
\begin{equation}\label{limit}
\lim_{i\to \infty}\lambda^i_{N_m-1}=0.
\end{equation}

The next step is the following
\begin{lemma}
If a partial $C^0$ estimate holds, then there are constants $C>0$ and $0<\delta<1$ so that for all $i$ we have
\begin{equation}\label{annoy}\frac{1}{V}\int_{M_i}\mn\de\psi_i\wedge\db\psi_i\wedge\ti{\omega}_i\geq-\frac{2\delta}{m}\log\lambda^i_{N_m-1}-C.
\end{equation}
\end{lemma}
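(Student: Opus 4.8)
The plan is to integrate by parts and reduce \eqref{annoy} to a statement about the concentration of a positive measure near the divisor $\{\ti{S}^i_{N_m}=0\}$. Write $\ell_i=\lambda^i_{N_m-1}$ and $\ti{\omega}_{\psi_i}=\ti{\omega}_i+\mn\de\db\psi_i$; by \eqref{algpot} we have $\ti{\omega}_{\psi_i}=\tfrac1m(\tau\circ\iota)^*\omega_{FS}$ with $\iota\colon M_i\to\mathbb{CP}^{N_m-1}$ the Kodaira embedding by $\{\ti{S}^i_j\}$ and $\tau$ the diagonal automorphism with entries $\{\lambda^i_j\}$, so in particular $\int_{M_i}\ti{\omega}_{\psi_i}\wedge\ti{\omega}_i=c_1(M_i)^2=V$; note also $\psi_i$ is $\ti{\omega}_i$-plurisubharmonic by \eqref{algpot}, and $\psi_i\leq C$ uniformly by \eqref{c0norm}. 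By \eqref{limit} we may assume $\ell_i\to0$ (if $\ell_i$ stays bounded below, \eqref{annoy} is trivial after enlarging $C$, since the left side is $\geq0$). Integrating by parts on the closed manifold $M_i$,
\[
\int_{M_i}\mn\de\psi_i\wedge\db\psi_i\wedge\ti{\omega}_i=\int_{M_i}(-\psi_i)\,\ti{\omega}_{\psi_i}\wedge\ti{\omega}_i+\int_{M_i}\psi_i\,\ti{\omega}_i^2 .
\]
Here the last term is $\geq-CV$: since $\psi_i\geq\tfrac1m\log|\ti{S}^i_{N_m}|^2_{\ti{h}_i^m}$ and $\sup_{M_i}|\ti{S}^i_{N_m}|^2_{\ti{h}_i^m}\geq\tfrac1V$ one has $\sup_{M_i}\psi_i\geq-C$, and the Green functions of the $\ti{\omega}_i$ have a uniform lower bound (smooth convergence), so $\tfrac1V\int_{M_i}\psi_i\ti{\omega}_i^2\geq\sup_{M_i}\psi_i-C\geq-C$. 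Thus it suffices to prove that the positive measure $\mu_i:=\ti{\omega}_{\psi_i}\wedge\ti{\omega}_i$, of total mass $V$, carries a definite fraction of its mass on a region where $-\psi_i$ is of size $-\log\ell_i$.

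Fix a small constant $c\in(0,\tfrac12)$ and set $E_i=\{\,|\ti{S}^i_{N_m}|^2_{\ti{h}_i^m}\leq\ell_i^{2c}\,\}$, a shrinking tube around $\{\ti{S}^i_{N_m}=0\}$. On $E_i$, using $\lambda^i_j\leq\ell_i$ for $j<N_m$ and \eqref{c0norm}, for $i$ large
\[
e^{m\psi_i}=\sum_{j=1}^{N_m}|\lambda^i_j|^2|\ti{S}^i_j|^2_{\ti{h}_i^m}\leq\ell_i^{2c}+\ell_i^2\sum_{j<N_m}|\ti{S}^i_j|^2_{\ti{h}_i^m}\leq2\ell_i^{2c},
\]
so $-\psi_i\geq-\tfrac{2c}{m}\log\ell_i-C$ on $E_i$. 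The key point is the complementary estimate on $M_i\setminus E_i$, where $|\ti{S}^i_{N_m}|^2_{\ti{h}_i^m}>\ell_i^{2c}$: there $\tau\circ\iota$ lands in the chart $\{z_{N_m}\neq0\}$ with components $\zeta_j=\lambda^i_j\ti{S}^i_j/\ti{S}^i_{N_m}$ satisfying $|\zeta_j|^2=(\lambda^i_j)^2|\ti{S}^i_j|^2_{\ti{h}_i^m}/|\ti{S}^i_{N_m}|^2_{\ti{h}_i^m}\leq C\ell_i^{2-2c}$, so its image lies in a ball of radius $O(\ell_i^{1-c})$ about $[0:\cdots:0:1]$, where $\omega_{FS}\leq\mn\sum_j d\zeta_j\wedge d\bar\zeta_j$. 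Estimating the differential of the ratio $\ti{S}^i_j/\ti{S}^i_{N_m}$ by means of the uniform bounds $|\nabla\ti{S}^i_j|\leq C$ from Step 2 and $|\ti{S}^i_{N_m}|_{\ti{h}_i^m}>\ell_i^{c}$ gives $|\de\zeta_j|_{\ti{\omega}_i}\leq C\lambda^i_j\ell_i^{-2c}\leq C\ell_i^{1-2c}$, hence $\mathrm{tr}_{\ti{\omega}_i}\ti{\omega}_{\psi_i}=\tfrac1m\mathrm{tr}_{\ti{\omega}_i}(\tau\circ\iota)^*\omega_{FS}\leq\tfrac1m\sum_j|\de\zeta_j|^2_{\ti{\omega}_i}\leq C\ell_i^{2-4c}$ on $M_i\setminus E_i$. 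Since $c<\tfrac12$ and $\eta\wedge\ti{\omega}_i=\tfrac12(\mathrm{tr}_{\ti{\omega}_i}\eta)\ti{\omega}_i^2$ in complex dimension $2$, this yields $\int_{M_i\setminus E_i}\ti{\omega}_{\psi_i}\wedge\ti{\omega}_i\leq C\ell_i^{2-4c}V\to0$, so $\mu_i(E_i)=V-o(1)\geq V/2$ for $i$ large.

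Combining the two estimates, for $i$ large $-\tfrac{2c}{m}\log\ell_i-C\geq0$ and, using $\psi_i\leq C$ on $M_i\setminus E_i$,
\[
\int_{M_i}(-\psi_i)\,\ti{\omega}_{\psi_i}\wedge\ti{\omega}_i\geq\int_{E_i}(-\psi_i)\,d\mu_i-C\geq\Big(-\tfrac{2c}{m}\log\ell_i-C\Big)\tfrac V2-C\geq-\tfrac{cV}{m}\log\ell_i-C ,
\]
which together with the integration-by-parts identity and $\int_{M_i}\psi_i\ti{\omega}_i^2\geq-CV$ gives \eqref{annoy} with $\delta=c/2\in(0,\tfrac14)$; the finitely many remaining $i$ are absorbed into $C$ (the left side of \eqref{annoy} being $\geq0$ and $\ell_i\leq1$). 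The one genuinely technical step is the concentration estimate on $M_i\setminus E_i$: one has to track the powers of $\ell_i$ carefully and use the uniform $C^1$ bounds on the sections $\ti{S}^i_j$ to see that the energy density of $\tau\circ\iota$ decays away from $\{\ti{S}^i_{N_m}=0\}$, which is what forces $\mu_i$ to concentrate on $E_i$.
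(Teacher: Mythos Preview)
Your argument is correct and takes a genuinely different route from the paper. The paper works directly with the Dirichlet integrand: it writes $\mn\de\psi_i\wedge\db\psi_i\wedge\ti{\omega}_i=\tfrac{1}{m^2}\big|\de\log\sum_j|\lambda^i_j|^2|\ti{S}^i_j|^2_{\ti{h}_i^m}\big|^2_{\ti{\omega}_i}\,\ti{\omega}_i^2$, picks a smooth point of the divisor $\{\ti{S}^i_{N_m}=0\}$ where the other sections do not vanish, takes coordinates $(z,w)$ with $|\ti{S}^i_{N_m}|^2=|z|^{2\ell}F$, and estimates the resulting rational integrand on the annulus $(\lambda^i_{N_m-1})^{1/\ell}\leq|z|\leq r$, obtaining $\int\rho^{4\ell-1}/(\lambda^4+\rho^{4\ell})\,d\rho\gtrsim-\log\lambda$ by a one–variable calculus computation. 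Your approach instead integrates by parts to $\int(-\psi_i)\,\ti{\omega}_{\psi_i}\wedge\ti{\omega}_i+\int\psi_i\,\ti{\omega}_i^2$, bounds the second term via Green's formula, and then proves a \emph{concentration} statement for the positive measure $\mu_i=\ti{\omega}_{\psi_i}\wedge\ti{\omega}_i$: away from the tube $E_i$ the composed map $\tau\circ\iota$ lands in a tiny ball in $\mathbb{CP}^{N_m-1}$, and the $C^1$ bounds on $\ti{S}^i_j$ force $\mathrm{tr}_{\ti{\omega}_i}(\tau\circ\iota)^*\omega_{FS}\leq C\ell_i^{2-4c}$, so almost all of the mass $V$ sits on $E_i$ where $-\psi_i\gtrsim-\tfrac{2c}{m}\log\ell_i$. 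The paper's computation is more elementary and self-contained (no Green's formula, no global geometry of the embedding), and naturally isolates the exact logarithmic divergence of a radial integral; your argument is more conceptual, replacing the local calculus by the geometric picture that $\tau$ crushes the embedded surface onto a single coordinate point away from the divisor, and would adapt more readily to situations where nice local models of the divisor are not at hand. Both yield only a small $\delta>0$, which, as the paper notes, is all that is needed.
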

This bound is very crude; with more care it is possible to see that one can choose $\delta$ as close to $1$ as one wants, at the expense of enlarging $C$. However, any $\delta>0$ will be enough for us.
\begin{proof}
First of all, from the definition of $\psi_i=\frac{1}{m}\log\sum_{j=1}^{N_m}|\lambda^i_j|^2 
|\ti{S}^i_j|^2_{\ti{h}_i^m},$ we have
\begin{equation}\label{long}
\begin{split}\int_{M_i}\mn\de\psi_i\wedge\db\psi_i\wedge\ti{\omega}_i&=\frac{1}{m^2}\int_{M_i}
\left|\de \log\sum_{j=1}^{N_m}|\lambda^i_j|^2 
|\ti{S}^i_j|^2_{\ti{h}_i^m}\right|^2_{\ti{\omega}_i}\ti{\omega}_i^2\\
&=\frac{1}{m^2}\int_{M_i}
\frac{\left|\de \sum_{j=1}^{N_m}|\lambda^i_j|^2 
|\ti{S}^i_j|^2_{\ti{h}_i^m}\right|^2_{\ti{\omega}_i}}{\left|\sum_{j=1}^{N_m}|\lambda^i_j|^2 
|\ti{S}^i_j|^2_{\ti{h}_i^m}\right|^2}\ti{\omega}_i^2.\end{split}
\end{equation}
We now pick a point $x_i\in M_i$ where the section $\ti{S}^i_{N_m}$ vanishes, but where none of the other sections 
$\ti{S}^i_{j}, j<N_m$ vanishes. Such a point exists because the sections are linearly independent. We can moreover assume that the zero locus of $\ti{S}^i_{N_m}$, which will be denoted by $\Sigma_i$, is smooth near $x_i$ (disregarding multiplicities), and near $x_i$ there is a chart with holomorphic coordinates $(z,w)$ (which depend on $i$, but converge to holomorphic coordinates on $M_\infty$) centered at $x_i$ so that locally 
\begin{equation}\label{brut}
|\ti{S}^i_{N_m}|^2_{\ti{h}_i^m}=|z|^{2\ell} F,
\end{equation}
where $F$ is a smooth function that depends on $i$ (but is bounded in $C^\infty$ uniformly in $i$) and $F$ does not vanish on $\Sigma_i$. Here $\ell$ is a positive integer (which depends on $i$ but is bounded), which is the order of vanishing of $\ti{S}^i_{N_m}$ along $\Sigma_i$ near $x_i$.

We fix a small radius $r$ so that if $B_i=\{|z|\leq r,\ |w|\leq r\}$, then on $B_i$ we have
$$0<\frac{1}{C}\leq |\ti{S}^i_j|^2_{\ti{h}_i^m}\leq C, \quad \left|\de  |\ti{S}^i_j|^2_{\ti{h}_i^m}\right|^2_{\ti{\omega}_i}\leq C,$$
for all $1\leq j<N_m$ (the fact that we can do this with $C$ independent of $i$ follows from the fact that the metrics $\ti{\omega}_i$ and the sections $\ti{S}^i_j$ converge smoothly). Notice that 
$\Sigma_i\cap B_i=\{z=0\}\cap B_i$. 
We then have
$$\left|\sum_{j=1}^{N_m}|\lambda^i_j|^2 
|\ti{S}^i_j|^2_{\ti{h}_i^m}\right|^2\leq \left(C|\lambda^i_{N_m-1}|^2+|\ti{S}^i_{N_m}|^2_{\ti{h}_i^m}\right)^2,$$
$$\left|\de \sum_{j=1}^{N_m}|\lambda^i_j|^2 
|\ti{S}^i_j|^2_{\ti{h}_i^m}\right|^2_{\ti{\omega}_i}\geq \frac{1}{C}\left|\de  |\ti{S}^i_{N_m}|^2_{\ti{h}_i^m}\right|^2_{\ti{\omega}_i}-C|\lambda^i_{N_m-1}|^4.$$
Recall that from \eqref{limit} we can assume that $\lambda^i_{N_m-1}$ goes to zero. 
We then let 
$$B'_i=\{|z|\leq (\lambda^i_{N_m-1})^{1/\ell},\ |w|\leq r\},$$ which is a small neighborhood of $\Sigma_i$ contained inside $B_i$ (for $i$ large).
We can then bound the last integral in \eqref{long} by
\begin{equation}\label{pr1}
\begin{split}\frac{1}{m^2}\int_{B_i\backslash B'_i}&\frac{\left|\de \sum_{j=1}^{N_m}|\lambda^i_j|^2 
|\ti{S}^i_j|^2_{\ti{h}_i^m}\right|^2_{\ti{\omega}_i}}{\left|\sum_{j=1}^{N_m}|\lambda^i_j|^2 
|\ti{S}^i_j|^2_{\ti{h}_i^m}\right|^2}\ti{\omega}_i^2\\
&\geq
\frac{1}{C}\int_{B_i\backslash B'_i}\frac{\left|\de  |\ti{S}^i_{N_m}|^2_{\ti{h}_i^m}\right|^2_{\ti{\omega}_i}-C|\lambda^i_{N_m-1}|^4}
{\left(C|\lambda^i_{N_m-1}|^2+|\ti{S}^i_{N_m}|^2_{\ti{h}_i^m}\right)^2}\ti{\omega}_i^2\\
&\geq \frac{1}{C}\int_{B_i\backslash B'_i}\frac{\left|\de  |\ti{S}^i_{N_m}|^2_{\ti{h}_i^m}\right|^2_{\ti{\omega}_i}}{\left(C|\lambda^i_{N_m-1}|^2+|\ti{S}^i_{N_m}|^2_{\ti{h}_i^m}\right)^2}\ti{\omega}_i^2-C.
\end{split}\end{equation}
Since on $B_i\backslash B'_i$ we have $|\ti{S}^i_{N_m}|^2_{\ti{h}_i^m}=|z|^{2\ell} F,$ on the same region we have
$$\left|\de  |\ti{S}^i_{N_m}|^2_{\ti{h}_i^m}\right|^2_{\ti{\omega}_i}\ti{\omega}_i^2\geq \frac{1}{C}|z|^{4\ell-2}dV_E,$$
since $|z|\leq r$ is small, where $dV_E=\mn dz\wedge d\ov{z}\wedge\mn dw\wedge d\ov{w}$ is the Euclidean volume form. We can then estimate
\begin{equation}\label{pr2}
\begin{split}
\int_{B_i\backslash B'_i}&\frac{\left|\de  |\ti{S}^i_{N_m}|^2_{\ti{h}_i^m}\right|^2_{\ti{\omega}_i}}
{\left(C|\lambda^i_{N_m-1}|^2+|\ti{S}^i_{N_m}|^2_{\ti{h}_i^m}\right)^2}\ti{\omega}_i^2\\
&\geq
\frac{1}{C}\int_{|w|\leq r}\int_{(\lambda^i_{N_m-1})^{1/\ell}\leq |z|\leq r}
\frac{|z|^{4\ell-2}}{|\lambda^i_{N_m-1}|^4+|z|^{4\ell}}dV_E\\
&\geq \frac{1}{C}\int_{(\lambda^i_{N_m-1})^{1/\ell}}^{r} \frac{\rho^{4\ell-2}}{|\lambda^i_{N_m-1}|^4
+\rho^{4\ell}}\rho d\rho\\
&=\frac{1}{C}\log\left(\frac{|\lambda^i_{N_m-1}|^4+r^{4\ell}}{|\lambda^i_{N_m-1}|^4+|\lambda^i_{N_m-1}|^4}\right)\\
&\geq -\frac{1}{C}\log\lambda^i_{N_m-1}-C,
\end{split}\end{equation}
for a uniform constant $C$. 
Combining the estimates \eqref{pr1} and \eqref{pr2} with \eqref{long}
finally proves \eqref{annoy}, for a suitable uniform $\delta>0$.
\end{proof}

Finally we can prove proposition \ref{improve}, which will finish the proof of the main theorem \ref{main}.
\begin{proof}[Proof of proposition \ref{improve}]
First of all we combine \eqref{harn3} with \eqref{annoy} and get
\begin{equation}\label{end1}\begin{split}\frac{1}{V}\int_{M_i}(-\vp_i)\omega_i^2&\leq 2\sup_{M_i}\vp_i-\frac{1}{V}\int_{M_i}\mn\de\psi_i\wedge\db\psi_i\wedge\ti{\omega}_i+C\\
&\leq 2\sup_{M_i}\vp_i+\frac{2\delta}{m}\log\lambda^i_{N_m-1}+C.\end{split}\end{equation}
On the other hand \eqref{good} gives
\begin{equation}\label{end2}\frac{2\delta}{m}\log \lambda^i_{N_m-1}\leq -\delta\sup_{M_i}\vp_i+\frac{\delta(1-\alpha)}{\alpha V}\int_{M_i}(-\vp_i)\omega_i^2+C,\end{equation}
and combining \eqref{end1} and \eqref{end2} we get
\begin{equation}\label{end3}
\frac{1}{V}\left(1-\frac{\delta(1-\alpha)}{\alpha}\right)\int_{M_i}(-\vp_i)\omega_i^2
\leq (2-\delta)\sup_{M_i}\vp_i+C.
\end{equation}
Since we are assuming that $\alpha_{m,2}(M_\infty)>2/3$, we now choose $\alpha$ so that
$2/3<\alpha<\alpha_{m,2}(M_\infty)$.
Then, since $\delta<1$ and $\alpha>2/3$ we see that the coefficient $(1-\frac{\delta(1-\alpha)}{\alpha})$ is positive (in any case we could have just taken a smaller $\delta$), and so from \eqref{end3} we get
$$\frac{1}{V}\int_{M_i}(-\vp_i)\omega_i^2
\leq \frac{\alpha(2-\delta)}{\alpha(1+\delta)-\delta}\sup_{M_i}\vp_i+C.$$
But since $\alpha>2/3$ one immediately checks that
$$\frac{\alpha(2-\delta)}{\alpha(1+\delta)-\delta}=2-\ve,$$
with 
$$\ve=\frac{3\alpha-2}{\frac{\alpha}{\delta}+\alpha-1}>0,$$
which completes the proof of \eqref{improved}.
\end{proof}
\setcounter{equation}{0}
\section{Appendix - Algebraic and Analytic $\alpha$-invariants}
In this appendix we will give an idea of why it is possible to compute the $\alpha$ invariant of a Fano manifold using algebraic geometry.

In what follows, $M$ will be an $n$-dimensional compact K\"ahler manifold with an ample line bundle $L$. We fix $\omega$ a K\"ahler metric in $c_1(L)$, and define Tian's $\alpha$ invariant
\[\begin{split}\alpha(L)=\sup\bigg\{\alpha>0\ &\bigg|\ \exists C>0 \textrm{ with }\int_M e^{-\alpha(\vp-\sup_M\vp)}\omega^n\leq C,\\ &\textrm{ for all }\vp\in C^\infty(M,\mathbb{R})\textrm{ with }\omega+\mn\de\db\vp>0\bigg\}.\end{split}\]
If $M$ is Fano and $L=K_M^{-1}$ this is exactly our previous definition.

We say that an $L^1$ function $\vp$ is $\omega-PSH$ if it is u.s.c. and it satisfies
$\omega+\mn\de\db\vp\geq 0$ in the sense of distributions. For any such $\vp$ we define its complex singularity exponent as
$$c(\vp)=\sup\bigg\{\alpha>0\ \bigg|\ \int_M e^{-\alpha\vp}\omega^n<\infty\bigg\}.$$
The main result of this section is the following proposition
\begin{proposition}[Demailly]We have that
$$\alpha(L)=\inf\{c(\vp)\ |\ \vp\textrm{ is } \omega-PSH\}.$$
\end{proposition}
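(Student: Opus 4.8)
The plan is to prove the two inequalities separately. Write $\alpha_* = \inf\{c(\vp) : \vp \textrm{ is } \omega\textrm{-PSH}\}$; we must show $\alpha(L) = \alpha_*$.

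\emph{The inequality $\alpha(L) \leq \alpha_*$.} This direction is the ``easy'' one in spirit, but requires a regularization argument. Fix an arbitrary $\omega$-PSH function $\vp$ and an arbitrary $\alpha < \alpha(L)$; we want $\alpha \leq c(\vp)$, i.e.\ $\int_M e^{-\alpha\vp}\omega^n < \infty$. If $\vp$ were smooth this would be immediate from the defining inequality \eqref{alph1} (after normalizing so that $\sup_M \vp = 0$, which only changes the integral by a bounded factor $e^{\alpha \sup_M \vp}$). For general $\omega$-PSH $\vp$, I would use Demailly's regularization theorem: there is a decreasing sequence of smooth functions $\vp_j$ with $\omega + \mn\de\db\vp_j \geq -\ve_j \omega$ for some $\ve_j \downarrow 0$, and $\vp_j \downarrow \vp$ pointwise. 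Then $\vp_j/(1+\ve_j)$ is a genuine K\"ahler potential for $\omega$ (replacing $\omega$ by a slightly larger cohomologous metric if needed, using that $\alpha(L)$ depends only on the class), so $\int_M e^{-\alpha \vp_j/(1+\ve_j)}\omega^n \leq C$ uniformly; since $\vp_j \geq \vp$ the integrands are bounded below, and Fatou's lemma gives $\int_M e^{-\alpha\vp}\omega^n \leq \liminf_j \int_M e^{-\alpha\vp_j/(1+\ve_j)} e^{\alpha(\vp_j/(1+\ve_j) - \vp)}\omega^n \leq C$ once one controls the extra exponential factor (which tends to $1$ since $\vp_j \downarrow \vp$ and $\ve_j \to 0$, using boundedness from above of $\vp$). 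Hence $\alpha \leq c(\vp)$, and taking the sup over $\alpha$ and inf over $\vp$ gives $\alpha(L) \leq \alpha_*$.

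\emph{The inequality $\alpha(L) \geq \alpha_*$.} This is the real content. Suppose for contradiction that $\alpha(L) < \alpha_*$, and pick $\alpha$ with $\alpha(L) < \alpha < \alpha_*$. By definition of $\alpha(L)$, the uniform bound \eqref{alph1} fails for this $\alpha$: there is a sequence of K\"ahler potentials $\vp_k$ with $\sup_M \vp_k = 0$ and $\int_M e^{-\alpha \vp_k}\omega^n \to \infty$. The strategy is to extract a limiting $\omega$-PSH function $\vp_\infty$ and show $c(\vp_\infty) \leq \alpha$, contradicting $\alpha < \alpha_*$. Since $\{\vp_k\}$ is a family of $\omega$-PSH functions with uniformly bounded sup, a compactness theorem for PSH functions (they are either locally uniformly bounded above, and then a subsequence converges in $L^1_{\mathrm{loc}}$ to an $\omega$-PSH limit, or diverge uniformly to $-\infty$; the latter is excluded because $\sup_M \vp_k = 0$) yields $\vp_k \to \vp_\infty$ in $L^1$ with $\vp_\infty$ $\omega$-PSH and $\sup_M \vp_\infty = 0$ (after upper semicontinuous regularization). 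The key point is then a semicontinuity statement for the integrals $\int_M e^{-\alpha\vp}$: one wants to conclude $\int_M e^{-\alpha\vp_\infty}\omega^n = \infty$, so that $c(\vp_\infty) \leq \alpha < \alpha_*$. This is where the openness/effective-semicontinuity theorem for complex singularity exponents (Demailly--Koll\'ar, the global analogue of Theorem \ref{semic}) enters: if $c(\vp_\infty) > \alpha$ then $\int_M e^{-\alpha'\vp_\infty}\omega^n < \infty$ for some $\alpha' \in (\alpha, c(\vp_\infty))$, and the semicontinuity theorem forces $\int_M e^{-\alpha \vp_k}\omega^n$ to stay bounded along the convergent subsequence, contradicting the choice of $\vp_k$. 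Hence $c(\vp_\infty) \leq \alpha$, so $\alpha_* \leq \alpha$, a contradiction.

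\emph{Main obstacle.} The delicate step is the semicontinuity/compactness machinery in the second inequality — extracting the $L^1$ limit $\vp_\infty$ and, above all, invoking the Demailly--Koll\'ar semicontinuity theorem to transfer non-integrability from the limit to the sequence. One must be careful that the convergence $\vp_k \to \vp_\infty$ is strong enough (it is, by standard pluripotential theory: $L^1$ convergence of $\omega$-PSH functions with a sup normalization), and that the hypotheses of the semicontinuity theorem (which is local in nature, then patched by compactness of $M$) are genuinely met. The regularization step in the first inequality is routine once Demailly's approximation theorem is granted, and the bookkeeping with $\sup_M \vp$ and the constant $C$ is harmless.
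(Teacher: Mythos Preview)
Your proof is correct and matches the paper's approach exactly: both inequalities are argued by contradiction, using regularization of $\omega$-PSH functions for $\alpha(L)\leq\alpha_*$ and $L^1$-compactness of normalized potentials together with the Demailly--Koll\'ar semicontinuity theorem for $\alpha(L)\geq\alpha_*$. The paper streamlines your first inequality by invoking the regularization theorem (B\l ocki--Ko\l odziej) in the stronger form that produces genuine K\"ahler potentials $\vp_j$ with $\omega+\sqrt{-1}\,\partial\bar\partial\vp_j>0$ and $\vp_j\downarrow\vp$, so no rescaling by $1/(1+\ve_j)$ is needed and Lebesgue monotone convergence applies directly to $e^{-\alpha\vp_j}\uparrow e^{-\alpha\vp}$, bypassing your Fatou step and the ``extra exponential factor'' bookkeeping.
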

\begin{proof}
For convenience let us denote $\ti{\alpha}(L)=\inf\{c(\vp)\ |\ \vp\textrm{ is } \omega-PSH\}.$
We fist show that $\alpha(L)\leq \ti{\alpha}(L)$. If not, we can find a number $\alpha$ with
$\ti{\alpha}(L)<\alpha<\alpha(L)$, so from the definitions on the one hand we have that there is $C>0$ with
\begin{equation}\label{uno}
\int_M e^{-\alpha(\vp-\sup_M\vp)}\omega^n\leq C,
\end{equation}
for all K\"ahler potentials $\vp$, but on the other hand there exists $\vp$ which is only $\omega-PSH$ so that
$$\int_M e^{-\alpha\vp}\omega^n=+\infty.$$
Since $\vp$ is u.s.c., it is bounded above, and this together with the fact that $\vp$ is in $L^1$ imply that $-\infty<\sup_M\vp<\infty$.
We apply a special case of Demailly's regularization theorem \cite{D1} (see also B\l ocki-Ko\l odziej \cite{BK} for a short proof)
and we see that there exist smooth functions $\vp_i$ with $\omega+\mn\de\db\vp_i>0$ (i.e. K\"ahler potentials) that decrease pointwise to $\vp$. In particular $\sup_M\vp_i$ is bounded uniformly for $i$ large, so from \eqref{uno} we see that
\begin{equation}\label{due}
\int_M e^{-\alpha\vp_i}\omega^n\leq C,
\end{equation}
for all $i$ large. Since the functions $e^{-\alpha\vp_i}$ increase to $e^{-\alpha\vp}$, it follows from the Lebesgue monotone convergence theorem that
$$\lim_{i\to\infty}\int_M e^{-\alpha\vp_i}\omega^n=\int_M e^{-\alpha\vp}\omega^n=+\infty,$$
which contradicts \eqref{due}. So $\alpha(L)\leq\ti{\alpha}(L)$.

On the other hand, if $\alpha(L)<\ti{\alpha}(L)$ then we can find $\alpha$ with 
$\alpha(L)<\alpha<\ti{\alpha}(L)$, so from the definitions we have that for any $\omega-PSH$ function the complex singularity exponent satisfies $c(\vp)>\alpha$, and so 
$$\int_M e^{-\alpha\vp}\omega^n<\infty,$$
but on the other hand there exist smooth K\"ahler potentials $\vp_i$ with
\begin{equation}\label{tre}
\int_M e^{-\alpha(\vp_i-\sup_M\vp_i)}\omega^n \geq i.
\end{equation}
By weak compactness of the currents $\omega+\mn\de\db\vp_i$, modulo subsequence we can assume that the functions $\vp_i-\sup_M\vp_i$ converge in $L^1$ to a limit $\psi$ which is $\omega-PSH$. Since we know that $c(\psi)>\alpha$, a theorem of Demailly-Koll\'ar \cite[Theorem 0.2 (2)]{DK} (which generalizes theorem \ref{semic}) implies that the functions $e^{-\alpha(\vp_i-\sup_M\vp_i)}$ converge in $L^1$ to $e^{-\alpha\psi}$. Since $\int_M e^{-\alpha\psi}\omega^n<\infty$, this contradicts \eqref{tre}.
\end{proof}

To relate this to algebraic geometry, let $D$ be any nonzero divisor in the linear series $|mL|$.
Therefore there is a global nonzero holomorphic section $S$ of $L^m$ with zero divisor equal to $D$.
Since $S$ is unique up to scaling, we will rescale it so that
$$\int_M |S|^2_{h^m}\omega^n=1,$$
where $h$ is a metric on $L$ with curvature $\omega$.
We then define the (global) log canonical threshold of the divisor $\frac{1}{m}D$ by
$$\mathrm{lct}\left(\frac{1}{m}D\right)=c\left(\frac{1}{m}\log|S|^2_{h^m}\right),$$
where notice that the function $\frac{1}{m}\log|S|^2_{h^m}$ is indeed $\omega-PSH$ because of the Poincar\'e-Lelong formula: $\omega+\frac{1}{m}\mn\de\db\log|S|^2_{h^m}=\frac{1}{m}[D]$, where $[D]$ is the current of integration along $D$.

The number $\mathrm{lct}\left(\frac{1}{m}D\right)$ can be computed in a purely algebraic way, and it depends on the singularities of $D$ (see \cite{CS}).
In the appendix of \cite{CS} Demailly proved the following result (see also \cite{DK}):
\begin{theorem}[Demailly]\label{dema}
We have
$$\alpha(L)=\inf_{m\geq 1}\inf_{D\in|mL|}\mathrm{lct}\left(\frac{1}{m}D\right).$$
\end{theorem}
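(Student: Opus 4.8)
The plan is to build on the proposition proved just above, which already identifies $\alpha(L)$ with $\inf\{c(\vp)\ |\ \vp\textrm{ is }\omega-PSH\}$, and to feed into it Demailly's analytic approximation of plurisubharmonic functions by Bergman-type potentials. For brevity write $A:=\inf_{m\geq1}\inf_{D\in|mL|}\mathrm{lct}\!\left(\frac1m D\right)$. I would begin with the easy inequality $\alpha(L)\leq A$: given a nonzero $D\in|mL|$ with defining section $S$ of $L^m$, the Poincar\'e--Lelong formula shows $\frac1m\log|S|^2_{h^m}$ is $\omega-PSH$, so by the definition of $\mathrm{lct}$ and the proposition above $\mathrm{lct}(\frac1m D)=c\!\left(\frac1m\log|S|^2_{h^m}\right)\geq\alpha(L)$; taking the infimum over $m$ and $D$ gives $A\geq\alpha(L)$.

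For the reverse inequality I would fix an arbitrary $\omega-PSH$ function $\vp$ and aim to prove $A\leq c(\vp)$, since then $A\leq\inf_\vp c(\vp)=\alpha(L)$ and we are done. The key is Demailly's regularization \cite{D1}: for each $m$ set $\mathcal{H}_m:=\{S\in H^0(M,L^m)\ |\ \int_M|S|^2_{h^m}e^{-m\vp}\omega^n<\infty\}$, a finite-dimensional space which is nonzero for $m$ large because $L$ is ample, pick a basis $\sigma_1,\dots,\sigma_{N_m}$ orthonormal for this weighted inner product, and put $\vp_m:=\frac1m\log\sum_{j=1}^{N_m}|\sigma_j|^2_{h^m}$. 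Demailly's theorem produces these $\vp_m$ converging to $\vp$ with control of Lelong numbers, and combined with the effective semicontinuity of complex singularity exponents of Demailly--Koll\'ar \cite{DK} (which refines Theorem~\ref{semic}) it yields the crucial fact that $c(\vp_m)\to c(\vp)$ as $m\to\infty$.

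It then remains to descend from the potentials $\vp_m$, which are logarithms of \emph{sums} of squares of sections, to honest divisors. For any $S=\sum_j a_j\sigma_j\in\mathcal{H}_m$ with $\sum_j|a_j|^2=1$, the Cauchy--Schwarz inequality gives $|S|^2_{h^m}\leq\sum_j|\sigma_j|^2_{h^m}=e^{m\vp_m}$, hence $\frac1m\log|S|^2_{h^m}\leq\vp_m$ pointwise, so $e^{-\al\vp_m}\leq|S|_{h^m}^{-2\al/m}$ for every $\al>0$, and therefore $c(\vp_m)\geq c\!\left(\frac1m\log|S|^2_{h^m}\right)=\mathrm{lct}\!\left(\frac1m D_S\right)$, where $D_S$ is the zero divisor of $S$ (the normalization of $S$ being immaterial for $c$). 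Since the $D_S$ all lie in $|mL|$, this shows $\inf_{D\in|mL|}\mathrm{lct}(\frac1m D)\leq c(\vp_m)$ for every $m$ with $\mathcal{H}_m\neq0$, and letting $m\to\infty$ gives $A\leq\lim_{m\to\infty}c(\vp_m)=c(\vp)$, as required.

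The hard part is the ingredient I would quote rather than prove: Demailly's analytic approximation, and in particular the convergence $c(\vp_m)\to c(\vp)$ of complex singularity exponents along the Bergman regularization. This is the genuine analytic heart of the matter --- it relies on the Ohsawa--Takegoshi $L^2$ extension theorem to manufacture approximants whose Lelong numbers converge to those of $\vp$, together with the effective semicontinuity estimates of Demailly--Koll\'ar to control the exponents in the limit. The remaining points --- nonvanishing of $\mathcal{H}_m$ for large $m$ (ampleness of $L$, via multiplier ideals and Nadel vanishing), the Poincar\'e--Lelong identification, and the Cauchy--Schwarz comparison --- are all routine.
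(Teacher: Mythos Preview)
The paper does not actually prove this theorem: it simply refers to Demailly's appendix in \cite{CS} and remarks that the argument relies crucially on the Ohsawa--Takegoshi extension theorem. Your sketch is very much in the spirit of that argument --- reduce to the preceding proposition, use the Bergman-type approximants $\vp_m$ built from $H^0(L^m\otimes\mathcal{I}(m\vp))$, and compare with a single section --- and you correctly flag Ohsawa--Takegoshi as the analytic core.

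There is, however, one genuine gap in the logic as you have organized it. Your chain is $A\leq \mathrm{lct}(\tfrac{1}{m}D_S)\leq c(\vp_m)$, and then you invoke $c(\vp_m)\to c(\vp)$ to conclude $A\leq c(\vp)$. The delicate half of that convergence is $\limsup_m c(\vp_m)\leq c(\vp)$, and neither of the two ingredients you cite gives it: Demailly--Koll\'ar semicontinuity yields $\liminf_m c(\vp_m)\geq c(\vp)$ (lower semicontinuity, the opposite direction), and the Ohsawa--Takegoshi pointwise bound $\vp_m\geq \vp - C/m$ again yields $c(\vp_m)\geq c(\vp)$. Convergence of Lelong numbers does not help either, since Lelong numbers only bound $c$ up to a factor of $n$ (Skoda). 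In other words, $\vp_m$ is a \emph{regularization} and hence a priori \emph{less} singular than $\vp$; the whole difficulty is to show it is not \emph{strictly} less singular in the limit. Establishing this upper bound on $c(\vp_m)$ is precisely the nontrivial content of Demailly's argument (it uses, beyond the pointwise OT estimate, the identification of the base ideal of $\mathcal{H}_m$ with the multiplier ideal $\mathcal{I}(m\vp)$ and a comparison of the resulting analytic singularities), so as written your proof is close to circular: the black box you quote is essentially the theorem. The Cauchy--Schwarz descent to a single divisor and the easy inequality $\alpha(L)\leq A$ are fine.
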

We refer to that paper for the proof, that relies crucially on the Ohsawa-Takegoshi extension theorem.
Using this result, the computation of the $\alpha$ invariant is reduced to computing log canonical thresholds of divisors. This is the approach taken by Cheltsov \cite{Ch} to prove Theorem \ref{estima} (see also \cite{Sh, CW}).

Finally, we remark that with similar arguments one can characterize also the invariants $\alpha_{m,1}(L), \alpha_{m,2}(L)$ as follows (see \cite{Sh})
$$\alpha_{m,1}(L)=\inf_{D\in|mL|}\mathrm{lct}\left(\frac{1}{m}D\right),$$
\[\begin{split}\alpha_{m,2}(L)=\inf\bigg\{c\left(\frac{1}{m}\log(|S_1|^2_{h^m}+|S_2|^2_{h^m})\right)\ 
\bigg|&\ S_1,S_2\in H^0(L^{m}),\\
& \int_M \langle S_i,S_j\rangle_{h^m}\omega^n=\delta_{ij}\bigg\},\end{split}\]
where $c\left(\frac{1}{m}\log(|S_1|^2_{h^m}+|S_2|^2_{h^m})\right)$ can also be interpreted algebraically as the log canonical threshold of $\frac{1}{m}\mathscr{I}$, where $\mathscr{I}$ is the ideal sheaf generated by $S_1, S_2$.
Notice that from theorem \ref{dema} it follows that
$$\alpha(L)=\inf_{m\geq 1}\alpha_{m,1}(L),$$
and in fact Tian conjectured \cite[Question 1]{Ti4} that when $m$ is large the numbers $\alpha_{m,1}(L)$ stabilize to $\alpha(M)$. 
On the other hand one also has that 
$$\alpha(L)=\inf_{m\geq 1}\alpha_{m,2}(L),$$ 
but it is known that these to not stabilize to $\alpha(M)$ in general, if $M$ is allowed to have rational double points singularities \cite[Remark 1.7]{kos}.

\end{document}